\title[Boundary Harnack for the linearized Monge-Amp\`ere equation]{Boundary Harnack inequality for the linearized Monge-Amp\`ere equations and applications}
\author{Nam Q. Le}
\address{Department of Mathematics, Indiana University, 831 E 3rd St,
Bloomington, IN 47405, USA}
\email{nqle@indiana.edu}
\thanks{The research of the author was supported in part by the National Science Foundation under grant DMS-1500400.}
\subjclass[2010]{35B51, 35B65, 35J08, 35J96, 35J70}
\keywords{Boundary Harnack inequality, linearized Monge-Amp\`ere equation, Green's function, boundary localization theorem, Monge-Amp\`ere equation}
\newcommand{\review}[2][\right]{\relax
\ifx#1\right\relax \left.\fi#2#1\rvert}
\let\abs=\envert
\newtheorem{theorem}{Theorem}[section]
\newtheorem{propo}[theorem]{Proposition}
\newtheorem{remark}[theorem]{Remark}
\newtheorem{cor}[theorem]{Corollary}
\newtheorem{lemma}[theorem]{Lemma}
\newcommand{\bef}{\begin{flushright}}
\newcommand{\eef}{\end{flushright}}
\newcommand{\eval}[2][\right]{\relax
\ifx#1\right\relax \left.\fi#2#1\rvert}
\let\abs=\envert
\numberwithin{equation}{section}
\newcommand{\p}{\partial}
\newcommand\e{\varepsilon}
\def\beq{\begin{eqnarray*}}
\def\eeq{\end{eqnarray*}}
\def\RR{\mbox{$I\hspace{-.06in}R$}}
\newenvironment{myindentpar}[1]%
{\begin{list}{}%
         {\setlength{\leftmargin}{#1}}%
         \item[]%
}
{\end{list}}
\def\l@subsection{\@tocline{2}{0pt}{2.5pc}{5pc}{}}
\begin{document}
\begin{abstract}
 In this paper, we obtain
 boundary Harnack estimates and comparison theorem for nonnegative solutions to the linearized Monge-Amp\`ere equations under natural assumptions 
on the domain, Monge-Amp\`ere measures and boundary data. 
Our results are boundary versions of Caffarelli and Guti\'errez's interior Harnack 
inequality for the
linearized Monge-Amp\`ere equations. 
As an application, we obtain sharp upper bound and global $L^p$-integrability for the Green's function of the linearized Monge-Amp\`ere operator.
\end{abstract}

\maketitle
\pagenumbering{arabic}

\section{Introduction }
\label{main_res}

This paper is concerned with boundary Harnack estimates for solutions to the linearized Monge-Amp\`ere equations 
$$L_\phi u:=-\sum_{i, j=1}^n \Phi^{ij} u_{ij}=0$$ 
where $\Phi =(\Phi^{ij})\equiv (\det D^2 \phi) (D^2 \phi)^{-1}$ denotes the cofactor matrix of the Hessian matrix $D^2\phi$ of a strictly convex function $\phi$. 
The operator $L_\phi$ appears in many contexts including the affine maximal surface equation in affine geometry \cite{TW00, TW05, TW08}, the Abreu's equation in the problem of finding
K\"ahler metrics of constant scalar curvature
in complex geometry \cite{Ab, D2, D4}, and the semigeostrophic equations in fluid mechanics \cite{Br, CNP, Loe}. 

The regularity theory for the linearized Monge-Amp\`ere equations was initiated in the fundamental paper \cite{CG} by Caffarelli and Guti\'errez. They developed an interior Harnack 
inequality theory for nonnegative solutions of the homogeneous
equations $L_\phi u=0$ in terms of the pinching of the Hessian determinant $$\lambda\leq \det D^{2} \phi\leq \Lambda.$$
Accordingly, they obtained H\"older continuity for solutions. Their approach is based on that of Krylov and Safonov \cite{KS1, KS2} on
H\"older estimates for linear, uniformly elliptic equations in general form with measurable coefficients, with sections replacing Euclidean balls. The notion of sections (or cross sections) of solutions to the Monge-Amp\`ere equation
 was first introduced and studied by Caffarelli \cite{C1,C2,C3,C4}, and plays an important role in his fundamental interior $W^{2,p}$ estimates \cite{C2}. Sections are defined 
as sublevel sets of convex solutions after subtracting their supporting hyperplanes. 
They have the same role as Euclidean balls have in the classical theory. 
A Euclidean ball of radius $r$ is a section with height $r^2/2$ of the quadratic function $|x|^2/2$ whose Hessian determinant is $1$.

This theory of Caffarelli and Guti\'errez is an affine invariant version of the classical Harnack inequality for uniformly elliptic equations with measurable coefficients. In fact, 
since the linearized Monge-Amp\`ere operator $L_\phi $ can be 
written in both divergence form and non-divergence form,  Caffarelli-Guti\'errez's H\"older regularity theorem is the affine invariant analogue of De 
Giorgi-Nash-Moser's theorem and also 
Krylov-Safonov's theorem on H\"older continuity of solutions to uniformly elliptic equations in divergence and nondivergence form, respectively.  
Caffarelli and Guti\'errez's theory has already played a crucial role in 
Trudinger and Wang's resolution \cite{TW00} of Chern's conjecture in affine geometry concerning affine maximal hypersurfaces in $\RR^3$ and in Donaldson's interior estimates for Abreu's equation 
in complex geometry \cite{D2}, paving the way for his resolution of the constant scalar curvature problem for toric surfaces \cite{D4}. It was also used by Caffarelli and Silvestre in their
pioneering paper on nonlocal equations \cite{CS} to prove several regularity results for problems involving the fractional powers of 
the Laplacian or other integro-differential equations.

The theory of Caffarelli and Guti\'errez is a basic building block in developing higher regularity results ($C^{1,\alpha}$, $W^{2, p}$ estimates) for the linearized Monge-Amp\`ere equations, both 
in the interior and at the boundary; see for example the works of Guti\'errez and Nguyen \cite{GN1, GN2}; Savin and the author \cite{LS}, Nguyen and the author \cite{LN2}.
It is worth noting that these boundary regularity estimates for the linearized Monge-Amp\`ere equations found interesting applications to nonlinear, fourth order, geometric PDEs. These
include: establishing
global second derivative estimates for the second boundary value problem of the prescribed affine maximal surface and Abreu's equations under optimal
integrability conditions on the affine mean curvature \cite{L2}, and proving global regularity for minimizers with prescribed determinant of certain functionals motivated by the Mabuchi functional in complex geometry in two dimensions \cite{LS2}. 

Recently, Maldonado \cite{M1, M} extended the Caffarelli-Guti\'errez theory and established the interior Harnack inequality for nonnegative solutions of the linearized Monge-Amp\`ere equations, with and 
without lower-order terms, under minimal geometric conditions on the Monge-Amp\`ere measure $\det D^2\phi$ of $\phi$, namely, it is 
doubling with respect to the center of mass on the sections of $\phi$.

In this paper, we establish the corresponding boundary Harnack inequality for nonnegative solutions of the linearized Monge-Amp\`ere equations under natural assumptions 
on the domain, Monge-Amp\`ere measures and boundary data. We give two applications of this boundary Harnack inequality.
In the first application, we prove a Comparison Theorem for two positive solutions
to the linearized Monge-Amp\`ere equations; here we assume furthermore that one of the two solutions vanishes on the boundary.
In the second application, we obtain a sharp upper bound for the Green's function of the linearized Monge-Amp\`ere operator and its uniform $L^p$-integrability
for the same integrability range as that of the Green's function of the Laplace operator.

The rest of the paper is organized as follows. In Section \ref{main-sec}, we state our main results (Theorems \ref{carl}, \ref{compa} and \ref{Gthm}). The main tools used in the proofs
of the main results will be recalled and established in the next two sections. In 
Section \ref{prelim_sec}, we recall basic geometric properties of solutions to the Monge-Amp\`ere equations and their rescalings
using the boundary Localization Theorem, the Localization Theorem at the boundary
for solutions to the Monge-Amp\`ere equations, 
and Caffarelli-Guti\'errez's interior Harnack inequality for solutions to the linearized Monge-Amp\`ere equations.
We establish some fine geometric properties of boundary sections of solutions to the Monge-Amp\`ere equation in Section \ref{quasi_sec}. These include Theorem \ref{pst}
and Lemma \ref{chain_lem} concerning a chain property of sections of $\phi$.
The proofs of Theorems \ref{carl} and \ref{compa} will be given in Section \ref{H_sec}. 
Theorem \ref{Gthm} will be proved in the final Section \ref{G_sec}.

\section{Statement of the main results}
 
\label{main-sec}

We assume the following global information on the convex domain $\Omega$ and the convex function $\phi$.
\begin{equation}
\Omega\subset B_{1/\rho}(0), ~\text{and for each $y\in\p\Omega$ there is a ball $B_{\rho}(z)\subset \Omega$ that is tangent to}~ \p 
\Omega~ \text{at}~ y.
\label{global-tang-int}
\end{equation}
Let $\phi : \overline \Omega \rightarrow \RR$, $\phi \in C^{0,1}(\overline 
\Omega) 
\cap 
C^2(\Omega)$  be a convex function satisfying
\begin{equation}\label{eq_u}
\det D^2 \phi =g, \quad \quad 0 <\lambda \leq g \leq \Lambda \quad \text{in $\Omega$}.
\end{equation}
Assume further that on $\p \Omega$, $\phi$ 
separates quadratically from its 
tangent planes,  namely
\begin{equation}
\label{global-sep}
 \rho\abs{x-x_{0}}^2 \leq \phi(x)- \phi(x_{0})-\nabla \phi(x_{0}) \cdot (x- x_{0})
 \leq \rho^{-1}\abs{x-x_{0}}^2, ~\forall x, x_{0}\in\p\Omega.
\end{equation}
We note that, given (\ref{eq_u}), the quadratic separation from tangent planes on the boundary (\ref{global-sep}) holds if 
$\phi\mid_{\p\Omega}, \p\Omega\in C^{3},~\text{and}~ \Omega~\text{is uniformly convex}$
(see \cite[Proposition 3.2]{S2}).

The section of $\phi$ centered at $x\in \overline \Omega$ with height $h$ is defined by
\begin{equation*}
 S_{\phi} (x, h) :=\big\{y\in \overline \Omega: \quad \phi(y) < \phi(x) + \nabla \phi(x)\cdot (y- x) +h\big\}.
\end{equation*}
For  $x\in \Omega$, we denote by $\bar{h}(x)$ the maximal height of all sections of $\phi$ centered at $x$ and contained in $\Omega$, that is,
$$\bar{h}(x): =\sup\{h\geq 0| \quad S_{\phi}(x, h)\subset \Omega\}.$$
In this case, $S_{\phi}(x, \bar{h}(x))$  is called the maximal interior section of $\phi$ with center $x\in\Omega$.

Denote by $\Phi= (\Phi^{ij})\equiv (\det D^2 \phi) (D^2 \phi)^{-1}$ the cofactor matrix of the Hessian matrix $D^2\phi= (\phi_{ij})$. $\Phi$ is divergence-free.
Then, the linearized operator of the Monge-Amp\`ere equation (\ref{eq_u}) is given by
$$L_\phi v:= -\sum_{i, j=1}^n\Phi^{ij}v_{ij}\equiv - \sum_{i, j=1}^n(\Phi^{ij} v)_{ij}.$$
We denote by $c, \bar{c}, c_1, c_2, C, \bar C, C_{1}, C_{2}, \theta_{0}, \theta_{\ast}, M, M_0, M_1, \cdots$, positive constants depending only on $\rho$, $\lambda$, $\Lambda$, 
$n$, and their values may change from line to line whenever 
there is no possibility of confusion. We refer to such constants as {\it universal constants}.

We can assume that all functions $\phi$, $u$ in this paper are smooth and thus solutions can be interpreted in the classical sense. However, our estimates do not depend on the assumed smoothness but only on the given structural constants.

Our first main result is concerned with boundary Harnack inequality for nonnegative solutions to the linearized Monge-Amp\`ere equations. 
\begin{theorem} [Boundary Harnack inequality or Carleson estimate]\label{carl} Assume that $\Omega$ and $\phi$ satisfy \eqref{global-tang-int}--\eqref{global-sep}. Let $x_0\in\p\Omega$ and $0<t\leq c$ universally small.
 Suppose that $u\geq 0$ is a continuous solution of $L_\phi u=0$ in $\Omega\cap S_\phi(x_0, t)$ with $u=0$ on $\p\Omega\cap S_\phi(x_0, t)$. Let $P_0\in\p S_\phi(x_0, t/4)$ be 
 any point satisfying $dist(P_0,\p\Omega)\geq c_1 t^{1/2}$ for some universal constant
 $c_1$. Then there is a universal constant $M_0>0$ 
 depending only on $\rho$, $\lambda$, $\Lambda$, 
$n$
 such that
 $$\max_{x\in \overline{S_\phi(x_0, t/2)}} u(x)\leq M_0 u(P_0).$$
 \end{theorem}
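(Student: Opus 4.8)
\emph{Proof proposal.} The plan is to transplant the classical Carleson‑estimate argument to the geometry of sections of $\phi$, using the boundary Localization Theorem, Caffarelli--Guti\'errez's interior Harnack inequality, and the chain property of sections. First I would pass to a normalized configuration: by the boundary Localization Theorem there is a unimodular affine change of variables (together with subtraction of a linear function) after which $x_0$ is the origin, $\phi$ retains the structural constants $\rho,\lambda,\Lambda$, $u$ becomes a nonnegative solution of the associated equation $L_\phi u=0$ vanishing on the flat part of the boundary, the section $S_\phi(x_0,t)$ is comparable to a half‑ball of radius $\sim t^{1/2}$ with flat boundary along $\partial\Omega$, and $P_0$ becomes an interior point with $\bar h(P_0)\sim t$. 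In such coordinates $\bar h(x)$ is comparable, up to universal constants, to $\operatorname{dist}(x,\partial\Omega)^2$, so the notions of \emph{interior point at scale $t$} and \emph{non-tangential point} have their usual meaning, and it is enough to bound $u$ pointwise on $\overline{S_\phi(x_0,t/2)}$ by a universal multiple of $u(P_0)$.

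The argument rests on two estimates. The first is interior: combining Caffarelli--Guti\'errez's interior Harnack inequality with the chain property of sections (Theorem~\ref{pst} and Lemma~\ref{chain_lem}), any $y\in\overline{S_\phi(x_0,t/2)}$ with $\bar h(y)\ge\tau t$ can be joined to $P_0$ by a chain of overlapping sections of $\phi$, all contained in $S_\phi(x_0,t)$ and of length bounded by $N=N(\tau)$; applying interior Harnack along the chain gives $u(y)\le C(\tau)\,u(P_0)$. The second is a boundary growth estimate: there are universal $\gamma\in(0,1)$ and $C$ so that for every $z\in\partial\Omega$ and $0<s\le r$ with $S_\phi(z,r)\subset S_\phi(x_0,t)$,
\[
\sup_{\Omega\cap S_\phi(z,s)} u\ \le\ C\Big(\frac{s}{r}\Big)^{\gamma}\sup_{\Omega\cap S_\phi(z,r)} u .
\]
This is where the Monge--Amp\`ere structure enters, and I would prove it by a comparison argument with a barrier built from a power of the height over the tangent plane of $\partial\Omega$ at $z$ --- in normalized coordinates essentially $x_n^{\gamma}$, which is an $L_\phi$-supersolution since $L_\phi(x_n^{\gamma})=\gamma(1-\gamma)\Phi^{nn}x_n^{\gamma-2}\ge 0$ --- with \eqref{global-sep} and the Localization Theorem used to account for the curvature of $\partial\Omega$ and the shape of $S_\phi(z,r)$ near the boundary; equivalently, this is a form of the boundary H\"older regularity for $L_\phi$ and could be quoted from there.

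With these in hand I would run the Carleson mechanism. Let $M:=\max_{\overline{S_\phi(x_0,t/2)}}u$; since $u\ge 0$ vanishes on $\partial\Omega\cap S_\phi(x_0,t)$, either $M=0$ and we are done, or $M$ is attained at some $z^\ast\in\Omega\cap\overline{S_\phi(x_0,t/2)}$. If $\bar h(z^\ast)\ge\tau t$ the first estimate gives $M\le C(\tau)\,u(P_0)$. Otherwise $z^\ast$ is close to $\partial\Omega$; let $z^{\ast\ast}\in\partial\Omega$ be a nearest boundary point. By the geometry of boundary sections from Section~\ref{quasi_sec} one has $z^\ast\in S_\phi\big(z^{\ast\ast},C\bar h(z^\ast)\big)$, and the dyadic tower $\{S_\phi(z^{\ast\ast},2^k\bar h(z^\ast))\}_{k\ge 0}$ stays inside $S_\phi(x_0,t)$ up to a top scale comparable to $t$, with the corkscrew points of consecutive sections joined by chains of uniformly bounded length (so their $u$-values are comparable up to a fixed factor $\widetilde C$). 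Running the boundary growth estimate down this tower, and at each scale splitting the curved boundary of the section into the part where $\bar h\ge\varepsilon\cdot(\text{that scale})$ --- controlled via the first estimate by $u(P_0)$ --- and the complementary part near $\partial\Omega$ --- controlled via the boundary growth estimate --- leads to an inequality of the shape
\[
u(z^\ast)\ \le\ C_\varepsilon\,u(P_0)\ +\ C\varepsilon^{\gamma}\cdot\big(\text{the supremum one scale up}\big),
\]
and choosing $\varepsilon$ so small that $C\varepsilon^{\gamma}\widetilde C<1$ makes the resulting series converge, yielding $u(z^\ast)\le M_0\,u(P_0)$. Together with the interior case this gives $M\le M_0\,u(P_0)$.

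The hard part will be the boundary growth estimate: one must produce a supersolution dominating $u$ all the way up to the \emph{corner} where $\partial S_\phi(z,r)$ meets $\partial\Omega$, which is exactly where the quadratic separation \eqref{global-sep} and the Localization Theorem are indispensable; a secondary difficulty is the bookkeeping in the iteration --- keeping the entire dyadic tower inside $S_\phi(x_0,t)$ and the corkscrew chains of uniformly bounded length --- which relies on the fine properties of boundary sections of Section~\ref{quasi_sec}. A compactness/contradiction proof is also conceivable (normalize, rescale by the maximum, extract a nonnegative limit solution vanishing on the flat boundary and at an interior point, and invoke the strong maximum principle), but it needs the same boundary regularity input to obtain compactness up to $\partial\Omega$.
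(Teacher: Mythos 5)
Your outline is a recognizable CFMS-type Carleson mechanism, and all the named ingredients (boundary Localization Theorem, Caffarelli--Guti\'errez interior Harnack, chain property, geometry of boundary sections) are the right ones; but the route through the details is genuinely different from the paper's. Two points of divergence are worth flagging. First, for the boundary growth input, the paper does not build a power-type supersolution $x_n^{\gamma}$: it instead invokes the boundary \emph{Lipschitz} estimate from \cite[Proposition 6.1]{LS}, which says (after normalization) that $u\le C\big(\max u\big)\cdot \mathrm{dist}(\cdot,\partial\Omega)$, and from this deduces the doubling Lemma~\ref{max-2h} ($\max$ on $S_\phi(x_0,h)$ exceeds twice the $\max$ on $S_\phi(x_0,h/H)$). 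This sidesteps exactly the corner issue you flag --- a raw $x_n^{\gamma}$ barrier does not dominate $u$ up to the edge $\partial S_\phi(z,r)\cap\partial\Omega$ without an extra corner term --- and it gives linear rather than H\"older decay. Your plan to ``quote boundary H\"older regularity'' would be sound, but as written the barrier step is a gap, not a routine verification. Second, your iteration is a direct dyadic tower, whereas the paper argues by contradiction: it produces a sequence $\{x_j\}$ with $u(x_j)\ge \hat C M_1^{k+j-1}u(P)$ and $\bar h(x_j)\to 0$ (Lemmas~\ref{small_dist}, \ref{dist_decay}, \ref{max-2h}), then shows the sequence stays inside $S_\phi(0,3c_3\bar t/4)$ using the quasi-metric inequality (Lemma~\ref{quasi}), the H\"older property of $\delta_\phi$ (Lemma~\ref{bdr-H-d}, which rests on Theorem~\ref{pst}), and the summation Lemma~\ref{ims-lem}, reaching a contradiction with continuity at $\partial\Omega$. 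Your direct iteration has the same essential difficulty hiding in the phrase ``the supremum one scale up'': because the boundary base point drifts from step to step, you need precisely this quasi-metric/H\"older bookkeeping to keep the tower inside $S_\phi(x_0,t)$ and to make the geometric series close --- otherwise the inequality you write is circular (the ``sup one scale up'' is not a priori any smaller than $M$). So the proposal is a correct sketch of an alternative organization, but both the boundary decay step and the control of the wandering iterates need to be replaced by (or reduced to) the paper's actual tools before it becomes a proof.
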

 \begin{remark}
 \label{cen-rem}
 The point $P_0$ in the statement of Theorem \ref{carl} always exists by Lemma \ref{cen-lem}. Also by this lemma,
 $dist(P_0,\p\Omega)$ is comparable, with universal constants,  to the largest distance from a point on $\p S_\phi(x_0, t/2)$ to 
 the boundary $\p\Omega$. By abuse of notion, we call $P_0$ the {\it apogee} of $S_\phi (x_0, t/2)$.
 \end{remark}
 Theorem \ref{carl} is a special case of the following Comparison Theorem. A particular case of this theorem asserts
that any two positive solutions of $L_\phi u=0$ in $\Omega$ which vanish on a portion of the
boundary must vanish at the same rate.
 \begin{theorem}[Comparison Theorem]\label{compa} Assume that $\Omega$ and $\phi$ satisfy \eqref{global-tang-int}--\eqref{global-sep}. Let $x_0\in\p\Omega$ and $0<t\leq c$ universally small.
 Suppose that $u, v\geq 0$ are continuous solutions of $L_\phi u=L_\phi v=0$ in $\Omega\cap S_\phi(x_0, t)$ with $u=0$ on $\p\Omega\cap S_\phi(x_0, t)$ and $v>0$ in $\Omega\cap S_\phi(x_0, t)$. Let $P\in\p S_\phi(x_0, t/4)$ be any point satisfying $dist(P,\p\Omega)\geq c_1 t^{1/2}$ for some universal constant
 $c_1$. Then there is a universal constant $M_0>0$ 
 depending only on $\rho$, $\lambda$, $\Lambda$, 
$n$
 such that
 $$\sup_{x\in S_\phi(x_0, t/2)\cap\Omega} \frac{u(x)}{v(x)}\leq M_0 \frac{u(P)}{v(P)}.$$
 \end{theorem}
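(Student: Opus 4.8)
The plan is to transplant the Caffarelli--Fabes--Mortola--Salsa proof of the boundary Harnack principle to the affine--invariant setting, with sections of $\phi$ systematically replacing Euclidean balls. The input that makes this transplantation possible is supplied by Sections~\ref{prelim_sec}--\ref{quasi_sec}: the engulfing and doubling properties of sections; the description of a boundary section $S_\phi(x_0,h)$ furnished by the boundary Localization Theorem (after the associated affine normalization, $S_\phi(x_0,h)\cap\Omega$ is comparable to a half--ball on whose boundary the flat piece $\p\Omega$ is a universally definite portion, at every dyadic scale); the chain property of Lemma~\ref{chain_lem} and Theorem~\ref{pst}; Caffarelli--Guti\'errez's interior Harnack inequality; and the Carleson estimate of Theorem~\ref{carl} (there is no circularity, since Theorem~\ref{carl} is the special case $v\equiv1$ of the present statement, as $L_\phi 1=0$). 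After subtracting a supporting affine function of $\phi$ at $x_0$ we may assume $x_0=0$ and this function is $0$; dividing $u$ by $u(P)$ and $v$ by $v(P)$, it suffices to prove $u\le C\,v$ on $D:=S_\phi(x_0,t/2)\cap\Omega$ under the normalization $u(P)=v(P)=1$, with $C$ universal. Write $\Gamma:=\p\Omega\cap S_\phi(x_0,t/2)$ and $\Sigma:=\p S_\phi(x_0,t/2)\cap\Omega$, so $\p D=\overline\Gamma\cup\overline\Sigma$, $u=0$ on $\Gamma$, and $P$ lies at a universal distance from $\Gamma$ inside $D$ (Remark~\ref{cen-rem}).

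\emph{Upper bound for $u$.} Let $\omega$ be the $L_\phi$--harmonic measure of $\Sigma$ relative to $D$, i.e.\ the solution of $L_\phi\omega=0$ in $D$ with $\omega=1$ on $\Sigma$ and $\omega=0$ on $\Gamma$. Since $P\in\p S_\phi(x_0,t/4)$ with $\mathrm{dist}(P,\p\Omega)\ge c_1t^{1/2}$, Theorem~\ref{carl} applies and yields $\max_{\overline{S_\phi(x_0,t/2)}}u\le C_0\,u(P)=C_0$, so $u\le C_0=C_0\,\omega$ on $\Sigma$; as also $u=0=C_0\,\omega$ on $\Gamma$ and $L_\phi$ obeys the maximum principle, we conclude $u(x)\le C_0\,\omega(x)$ on $D$.

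\emph{Lower bound for $v$ --- the crux.} I claim there is a universal $c_\ast>0$ with $v(x)\ge c_\ast\,\omega(x)$ on $D$; granting this, $u\le C_0\omega\le(C_0/c_\ast)v$ on $D$, which after undoing the normalization is the theorem (and, taking $v\equiv1$, recovers Theorem~\ref{carl}). For $x$ at a universal distance from $\Gamma$ the claim is immediate from interior Harnack via the chain property. For $x$ near $\Gamma$, let $z\in\overline\Gamma$ be a nearby boundary point and $j\sim\log(t/\bar h(x))$ the dyadic scale of $x$, so $x\in S_\phi(z,2^{-j}t)$. One runs an induction over $j$ that simultaneously produces a universal $\theta\in(0,1)$ with $\omega(A_j)\le C\theta^j$ and $v(A_j)\ge c\theta^j$, where $A_j$ is the apogee of $S_\phi(z,2^{-j}t)$; the inductive step compares $\omega$, $v$, and the interior values of $v$ at scale $2^{-j}t$ with the strict subsolution $w_z:=\phi-\ell_z$ ($\ell_z$ a supporting affine function of $\phi$ at $z$, so $L_\phi w_z=-n\det D^2\phi\le-n\lambda<0$, and by the quadratic separation~\eqref{global-sep} $w_z(x)\simeq|x-z|^2$ for $x\in\p\Omega$ near $z$), using the Localization Theorem to know that $\p\Omega$ is a universally definite portion of $\p S_\phi(z,2^{-j}t)$, together with interior Harnack. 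Dividing these two estimates gives $v(A_j)\ge(c/C)\omega(A_j)$, and interior Harnack inside $S_\phi(z,2^{-j}t)$ upgrades it to $v(x)\ge c_\ast\omega(x)$.

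The main obstacle is the coupled induction just described --- equivalently, the lower bound $v\ge c_\ast\omega$. It is exactly the step at which the ``doubling/decay of harmonic measure'' mechanism behind the Euclidean boundary Harnack principle must be re--derived in the language of sections, and where the fine geometry of boundary sections from Section~\ref{quasi_sec} --- the shape of $S_\phi(x_0,h)$ at every dyadic scale, and the logarithmic bound on the number of interior Harnack steps to reach the apogee --- is used essentially. Two further technical points deserve care: controlling how the normalizing affine maps at the different dyadic scales fit together, and treating the case --- allowed by the hypotheses, since $v$ need not vanish or even be bounded on $\Gamma$ --- in which $v$ is large near part of $\Gamma$, which is handled by first replacing $v$ by the $L_\phi$--harmonic function having $v$'s data on $\Sigma$ and zero data on $\Gamma$ and checking that this costs only a universal factor at $P$.
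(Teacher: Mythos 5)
Your proposal takes a genuinely different route from the paper. The paper's proof is short and direct: after normalizing $u(P)=v(P)=1$, it combines Theorem~\ref{carl} with a boundary decay estimate (\cite[Proposition~6.1]{LS}) to get $u(x)\lesssim t^{-1/2}\,\mathrm{dist}(x,\p\Omega)$ on $S_\phi(x_0,t/2)$, and it combines Lemma~\ref{chain_lem}, interior Harnack, and a boundary growth estimate (\cite[Proposition~5.7]{LS}) to get $v(x)\gtrsim t^{-1/2}\,\mathrm{dist}(x,\p\Omega)$ there; dividing finishes. The point is that after the rescaling of the Localization Theorem the boundary is $C^{1,1}$ (Lemma~\ref{sep-lem}(iii)), so linear decay/growth in $\mathrm{dist}(\cdot,\p\Omega)$ is exactly the right quantity, and the two propositions from \cite{LS} package all the barrier work. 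You instead follow the CFMS template verbatim, introducing the $L_\phi$-harmonic measure $\omega$ of $\Sigma$ and reducing to the bound $v\ge c_\ast\omega$. That reduction is fine, and the upper-bound half ($u\le C_0\omega$ via Theorem~\ref{carl} plus the maximum principle) is correct.

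The problem is that the half you call ``the crux'' --- the lower bound $v\ge c_\ast\omega$ --- is left as a sketch that, as written, does not close. Two concrete issues. First, the ``coupled induction'' asserting $\omega(A_j)\le C\theta^j$ and $v(A_j)\ge c\theta^j$ with a \emph{single} rate $\theta$ is circular: showing that $\omega$ and $v$ decay at comparable rates at the apogees $A_j$ is already (a form of) the boundary Harnack inequality, which is what we are trying to prove. The standard CFMS argument does not prove a common geometric rate; it proves a doubling property for each quantity and then compares them, and it is that comparison that requires real work (a Bourgain/CFMS oscillation argument, or a comparison of harmonic measures of dyadically nested boundary pieces). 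None of that is in the sketch. Second, the barrier $w_z=\phi-\ell_z$ is the wrong shape for the job: since $\Phi^{ij}(w_z)_{ij}=n\det D^2\phi>0$, $w_z$ is a strict \emph{sub}solution and therefore a \emph{lower} barrier, but $w_z>0$ on $\p\Omega$ near $z$ by~\eqref{global-sep} while $\omega$ vanishes there, so no multiple of $w_z$ lies below $\omega$ on $\p D$; and if you instead want an upper barrier for $\omega$ you need a \emph{super}solution, which $w_z$ is not. The barriers one actually needs are linear-in-$\mathrm{dist}$ corrections (as in \cite[Propositions~5.7, 6.1]{LS}), not the quadratically-separating $w_z$. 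Finally, the replacement of $v$ by the function with zero data on $\Gamma$ is not innocuous as claimed: one must show the replaced function is still bounded below at $P$ by a universal constant, which is itself a nontrivial harmonic-measure estimate. In short, the high-level plan is a legitimate alternative to the paper's, but the central inequality $v\ge c_\ast\omega$ is neither proved nor proved by the tools you invoke in the form you invoke them.
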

The proofs of Theorems \ref{carl} and \ref{compa} will be given in Section \ref{H_sec}.

Theorem \ref{compa} is an affine invariant
 analogue of the comparison theorems in \cite{CFMS, B}. In \cite{CFMS}, Caffarelli, Fabes, Mortola and Salsa proved a comparison theorem for positive solutions of 
 linear, uniform elliptic equations in divergence form. In \cite{B}, Bauman 
 proved a comparison theorem for positive solutions of linear, uniform elliptic equations in non-divergence form with continuous coefficients. 
 \begin{remark}
  We require $t$ to be universally small in Theorems \ref{carl} and \ref{compa} for convenience. A simple covering argument with the help of the boundary Localization Theorem 
  \ref{main_loc} and the interior Harnack inequality in Theorem \ref{Holder_thm} shows that the above theorems hold for any $t$ satisfying 
  $S_\phi(x_0, t)\not\supset \overline{\Omega}$.
 \end{remark}

As an application of Theorem \ref{carl}, we obtain sharp upper bound for the Green's function of the linearized Monge-Amp\`ere operator $L_{\phi}$. 
Let $g_{V}(x, y)$ be the Green's function of $L_\phi$ in $V$ with pole $y\in V\cap\Omega$ where $V\subset\overline{\Omega}$, that is
$g_V(\cdot, y)$ is a positive solution of 
$$L_\phi g_V(\cdot, y)=\delta_{y}~\text{in} ~V\cap\Omega,~\text{and}~
 g_V(\cdot, y) =0~\text{on}~\p V.$$
\begin{theorem} [Bound on Green's function] \label{Gthm} Assume that $\Omega$ and $\phi$ satisfy \eqref{global-tang-int}--\eqref{global-sep}.
Let $x_0\in\Omega$ and $0<t_0<c(n,\lambda,\Lambda,\rho).$ If $x\in \p S_\phi (x_0, t_0)$ then
 $$g_\Omega(x, x_0)\leq \begin{cases} C(n,\lambda,\Lambda, \rho)t_0^{-\frac{n-2}{2}} &\mbox{if } n \geq 3 \\
C(n,\lambda,\Lambda, \rho)|\log t_0| & \mbox{if } n = 2. \end{cases} $$
\end{theorem}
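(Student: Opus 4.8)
The plan is to prove the equivalent statement that
$A(t_0):=\max_{x\in\partial S_\phi(x_0,t_0)}g_\Omega(x,x_0)$ obeys the asserted bound, and to obtain it by reducing $L_\phi$ to a uniformly elliptic operator after an affine normalization at the right scale. First I would record two elementary facts. Since $g_\Omega(\cdot,x_0)$ solves $L_\phi=0$ in $\Omega\setminus S_\phi(x_0,t)$ and vanishes on $\partial\Omega$, the maximum principle makes $t\mapsto A(t)$ non-increasing. And, since $L_\phi$ is self-adjoint, $w(x):=\int_\Omega g_\Omega(y,x)\,dy$ solves $L_\phi w=1$ in $\Omega$, $w=0$ on $\partial\Omega$; comparing with the barrier $(n\lambda)^{-1}(\max_{\overline\Omega}\phi-\phi)$, which satisfies $L_\phi\big((n\lambda)^{-1}(\max_{\overline\Omega}\phi-\phi)\big)=g/\lambda\ge1$, and using the universal bound $\|\phi\|_{C^{0,1}(\overline\Omega)}\le C$, gives $\int_\Omega g_\Omega(y,x)\,dy\le C$ for all $x$. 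Combined with the Caffarelli--Guti\'errez local maximum principle this yields the base case: $g_\Omega(x,y)\le C$ whenever $x$ and $y$ are separated by a universal amount. Thus it suffices to estimate $g_\Omega$ at points that are, in the section sense, close to $x_0$.

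For the interior case, when $S_\phi(x_0,t_0)$ is compactly contained in $\Omega$, I would normalize the maximal interior section $\widetilde S:=S_\phi(x_0,\bar h(x_0))$ by an affine map $T$ with $\det T\simeq\bar h(x_0)^{-n/2}$ sending $\widetilde S-x_0$ to a set comparable to $B_1$ and $x_0$ to the origin; because $\Phi$ is the cofactor matrix, $L_\phi$ transforms into (a scalar times) a uniformly elliptic operator $L_{\widetilde\phi}$ of the same divergence-free type with $\det D^2\widetilde\phi\simeq1$. Writing $g_\Omega(\cdot,x_0)=g_{\widetilde S}(\cdot,x_0)+w$ with $w\ge0$ solving $L_\phi w=0$ in $\widetilde S$, I would bound the first term by transporting the classical sharp estimate for Green's functions of uniformly elliptic operators (Littman--Stampacchia--Weinberger in divergence form; Bauman / Fabes--Stroock in non-divergence form): $g^{\mathrm{ell}}_{\widetilde S^\ast}(\tilde x,0)\le C|\tilde x|^{2-n}$ for $n\ge3$, resp.\ $\le C(1+|\log|\tilde x||)$ for $n=2$. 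The key point is that the anisotropic scaling is exactly self-similar: the factor $\bar h(x_0)^{-(n-1)}(\det T)^{-1}\simeq\bar h(x_0)^{-(n-2)/2}$ from the transformation of the $\delta$-source and the factor $(t_0/\bar h(x_0))^{-(n-2)/2}$ from $|\tilde x|^{2-n}$ (with $\tilde x$ on $\partial S_{\widetilde\phi}(0,t_0/\bar h(x_0))$) combine to $Ct_0^{-(n-2)/2}$ independently of $\bar h(x_0)$; when $n=2$ the prefactor is $1$ and one gets $C|\log(t_0/\bar h(x_0))|\le C|\log t_0|$. The harmonic part $w$ is controlled on $\partial S_\phi(x_0,t_0)$ by the interior Harnack inequality (Theorem \ref{Holder_thm}) and the chain property (Lemma \ref{chain_lem}) in terms of $g_\Omega(\xi,x_0)$ for a reference point $\xi$ at the far edge of $\widetilde S$.

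This reduces matters to bounding $g_\Omega(\xi,x_0)$ for a point $\xi$ near $\partial\Omega$, which is where the boundary case and the Carleson estimate enter. Using the boundary Localization Theorem (Theorem \ref{main_loc}) I would normalize the boundary section $S_\phi(x_0^\ast,s)$ of the appropriate height $s$, with $x_0^\ast$ the closest boundary point, into a domain comparable to a half-ball on which $L_\phi$ is again uniformly elliptic; the decisive structural feature is that after the splitting $g_\Omega(\cdot,x_0)=g_{S_\phi(x_0^\ast,s)}(\cdot,x_0)+w$, the remainder $w\ge0$ solves $L_\phi w=0$ in $\Omega\cap S_\phi(x_0^\ast,s)$ and \emph{vanishes on $\partial\Omega\cap S_\phi(x_0^\ast,s)$}, so Theorem \ref{carl} applies and gives $\sup w\le M_0\,w(P)\le M_0\,g_\Omega(P,x_0)$, $P$ the apogee of the corresponding section, whose distance to $\partial\Omega$ is $\simeq s^{1/2}$ — much larger than before. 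The term $g_{S_\phi(x_0^\ast,s)}(\cdot,x_0)$ is estimated exactly as in the interior case, its vanishing on $\partial\Omega$ only helping. Iterating the interior and boundary steps pushes the reference point steadily deeper into $\Omega$; anchoring the boundary normalizations at a fixed universal height makes this terminate after boundedly many steps, at which stage the reference point lies a universal distance from $\partial\Omega$ and the base case applies, so that summing the contributions (each dominated by $Ct_0^{-(n-2)/2}$, resp.\ $C|\log t_0|$, since all later scales are polynomially larger than $t_0$) yields the theorem.

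The hard part, I expect, is threefold. First, verifying that the anisotropic affine and boundary normalizations genuinely conspire to produce the sharp exponent $\tfrac{n-2}{2}$ (and only a logarithm when $n=2$), rather than the non-sharp power $t_0^{-\beta}$ that a crude Harnack iteration would give; this is what forces the one-shot comparison with an explicit model at the maximal (interior or boundary) scale. Second, the bookkeeping so that the proximity of $x_0$ to $\partial\Omega$ — which makes $S_\phi(x_0,t_0)$ meet $\partial\Omega$ and renders interior Harnack and chaining useless on the part of $\partial S_\phi(x_0,t_0)$ near $\partial\Omega$ — is absorbed by a \emph{bounded} number of applications of the Carleson estimate, which is essential both for keeping the constant universal and for the logarithmic bound when $n=2$. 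Third, using the fine geometry of boundary sections from Section \ref{quasi_sec} (Theorem \ref{pst}, Lemma \ref{chain_lem}, and the location of apogees from Lemma \ref{cen-lem}) to guarantee that each normalized picture has genuinely bounded geometry and that the successive reference points and apogees sit where the above estimates require.
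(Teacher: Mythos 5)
The central step in your proposal does not hold. You claim that after the affine normalization the operator ``$L_\phi$ transforms into (a scalar times) a uniformly elliptic operator $L_{\widetilde\phi}$ \dots with $\det D^2\widetilde\phi\simeq 1$,'' and you then import the classical sharp Green's function estimates for uniformly elliptic operators (Littman--Stampacchia--Weinberger, Bauman, Fabes--Stroock). But a bound $\det D^2\widetilde\phi\in[\lambda,\Lambda]$ does \emph{not} imply that $D^2\widetilde\phi$, and hence the cofactor matrix $\widetilde\Phi$, is bounded above and below; the eigenvalues can still degenerate and blow up. This is precisely the obstruction that motivates the whole Caffarelli--Guti\'errez framework, and the paper explicitly emphasizes it: under \eqref{global-tang-int}--\eqref{global-sep} the best one has is $D^2\phi\in L^{1+\varepsilon}$, so $L_\phi$ (even after affine normalization of a section to unit size) is neither uniformly elliptic nor has bounded coefficients. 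Consequently your one-shot comparison ``with an explicit model at the maximal (interior or boundary) scale'' has no valid model to compare with, and the claimed estimate $g_{\widetilde S^\ast}(\widetilde x,0)\le C|\widetilde x|^{2-n}$ for the transformed operator is unsupported. The remainder of your argument (using Carleson to control the residual harmonic piece, chaining to reach a universal scale) is built on this unavailable pointwise Green's function bound, so the gap is not merely technical.

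The paper's route avoids this entirely: it never produces a pointwise Green's function bound by comparison with a classical model. Instead, Lemma \ref{abplem} uses the Aleksandrov--Bakelman--Pucci estimate (only $\det D^2\phi\ge\lambda$ is needed) to get a universal $L^{n/(n-1)}$, hence $L^1$, bound on $g$; Lemma \ref{genup} converts the $L^1$ bound into a pointwise bound on a ring of sections via the interior Harnack inequality and a Vitali covering; Lemma \ref{doubleh} then gives $g_{S_\phi(x_0,2t)}(z,x_0)\le Ct^{-(n-2)/2}$ on $\p S_\phi(x_0,t)$, handling the boundary case with the chain property (Lemma \ref{chain_lem}), Carleson (Theorem \ref{carl}) and the maximum principle; and Lemma \ref{todh} supplies the dyadic iteration $\max_{\p S_\phi(x_0,t)}g_\Omega\le Ct^{-(n-2)/2}+\max_{\p S_\phi(x_0,2t)}g_\Omega$. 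The sharp exponent then comes from summing the geometric series $\sum_l (2^lt_0)^{-(n-2)/2}$ for $n\ge3$, and from $m\simeq|\log t_0|$ dyadic steps when $n=2$ — not from a self-similar one-shot comparison. One small positive: your barrier computation $L_\phi\bigl((n\lambda)^{-1}(\max_{\overline\Omega}\phi-\phi)\bigr)=g/\lambda\ge1$ together with $\|\phi\|_{C^{0,1}}\le C$ is a clean, self-adjointness-based alternative to Lemma \ref{abplem} for the $L^1$ bound; that part would survive. But to fix the proposal you would need to replace the ``classical model Green's function'' step with a dyadic/ABP argument of the paper's type (or some other mechanism compatible with the degenerate coefficients), since there is no uniformly elliptic normalization to fall back on.
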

Theorem \ref{Gthm} is a natural global counterpart of \cite[Theorem 1.1]{L} where we established sharp upper bound for the Green's function of $L_\phi$ in the interior domain
$V$ of $\Omega$ (see also \cite[Lemma 3.3]{TiW} and \cite[Theorem 3]{MRL} for related interior results).
From Theorem \ref{Gthm}, we obtain the uniform $L^p$ bound ($p<\frac{n}{n-2}$) of the Green's function.
\begin{cor} [$L^p$-integrability of Green's function]
 \label{lpbound}
  Assume that $\Omega$ and $\phi$ satisfy \eqref{global-tang-int}--\eqref{global-sep}. Let $p\in (1,\frac{n}{n-2})$ if $n\geq 3$ and $p\in (1,\infty)$ if $n=2$. Then for all $x_0\in\Omega$, we have
$$\int_{\Omega}g_{\Omega}^p (x, x_0) dx\leq C(n, \lambda,\Lambda, \rho, p).$$
More generally, for all $x_0\in\Omega$ and $t\leq 
c_1(n, \lambda,\Lambda, \rho, p)$
we have
$$\sup_{x\in S_\phi(x_0, t)\cap\Omega} \int_{S_\phi(x_0, t)} g_{S_\phi(x_0, t)}^p (y, x) dy \leq C(n, \lambda,\Lambda, \rho, p) |S_\phi(x_0, t)|^{1-\frac{n-2}{n}p}.$$
\end{cor}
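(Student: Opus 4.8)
The plan is to deduce both bounds from the pointwise estimate of Theorem \ref{Gthm} by slicing into a dyadic family of annular regions bounded by concentric sections of $\phi$, controlling $g_\Omega$ on each slice via Theorem \ref{Gthm} and the maximum principle, and then summing a geometric series whose convergence is governed precisely by the restriction $p<\tfrac{n}{n-2}$.

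For the first bound, fix $x_0\in\Omega$ and let $c_\ast$ be half the height threshold $c(n,\lambda,\Lambda,\rho)$ of Theorem \ref{Gthm}. On $\Omega\setminus S_\phi(x_0,c_\ast)$ the function $g_\Omega(\cdot,x_0)$ solves $L_\phi g_\Omega(\cdot,x_0)=0$, vanishes on $\p\Omega$, and hence, by the maximum principle and Theorem \ref{Gthm} at height $c_\ast$, is bounded there by a universal constant; since $|\Omega|\le|B_{1/\rho}|$, this piece contributes at most $C(n,\lambda,\Lambda,\rho)$ to $\int_\Omega g_\Omega^p(\cdot,x_0)$. For the rest, put $A_k:=S_\phi(x_0,2^{-k}c_\ast)\setminus S_\phi(x_0,2^{-k-1}c_\ast)$, $k\ge0$, so that $\bigcup_{k\ge0}A_k$ exhausts $S_\phi(x_0,c_\ast)$ up to the single point $x_0$. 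Applying Theorem \ref{Gthm} at height $2^{-k-1}c_\ast$ together with the maximum principle on $\Omega\setminus S_\phi(x_0,2^{-k-1}c_\ast)$ gives, for $n\ge3$, the bound $g_\Omega(\cdot,x_0)\le C\,(2^{-k}c_\ast)^{-\frac{n-2}{2}}$ on $A_k$, while the volume estimate $|S_\phi(x_0,h)|\sim h^{n/2}$ for sections of solutions of \eqref{eq_u} under \eqref{global-tang-int}--\eqref{global-sep} (recalled in Section \ref{prelim_sec}) gives $|A_k|\le C\,(2^{-k}c_\ast)^{n/2}$. Hence $\int_{A_k}g_\Omega^p(\cdot,x_0)\le C\,(2^{-k}c_\ast)^{\frac n2-\frac{(n-2)p}{2}}$, and since $\tfrac n2-\tfrac{(n-2)p}{2}>0$ exactly when $p<\tfrac{n}{n-2}$, summing over $k$ produces a convergent series bounded by $C(n,\lambda,\Lambda,\rho,p)$. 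For $n=2$ the same argument, now with $g_\Omega(\cdot,x_0)\le C|\log(2^{-k}c_\ast)|\le C(k+1)$ on $A_k$ and $|A_k|\le C\,2^{-k}c_\ast$, yields $\int_{A_k}g_\Omega^p\le C(k+1)^p 2^{-k}$, whose sum converges for every $p\in(1,\infty)$. This proves the first bound.

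For the more general bound, fix $x_0\in\Omega$, $t$ small as in the statement, and set $V:=S_\phi(x_0,t)$. Using the rescalings of $\phi$ recalled in Section \ref{prelim_sec} — the interior normalization of a section when $V$ is compactly contained in $\Omega$, and the boundary Localization Theorem \ref{main_loc} when $V$ touches $\p\Omega$ — one obtains an affine map $T$ with $|\det T|\sim t^{-n/2}\sim|V|^{-1}$ such that the rescaled potential $\tilde\phi$ on $\tilde V:=T(V)$ still satisfies $\lambda\le\det D^2\tilde\phi\le\Lambda$, with $\tilde V$ comparable to the unit ball in the interior case and satisfying \eqref{global-tang-int}--\eqref{global-sep} with universal constants in the boundary case. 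Under this rescaling $L_\phi$ is carried to $L_{\tilde\phi}$ and the Green's functions transform by $g_V(y,x)=t^{1-\frac n2}\,\tilde g_{\tilde V}(Ty,Tx)$ for $n\ge3$ (with a logarithmic correction when $n=2$); changing variables $y=T^{-1}\tilde y$ and using $|\det T|^{-1}\sim t^{n/2}$ then gives $\int_V g_V^p(y,x)\,dy\sim t^{\frac n2+p(1-\frac n2)}\int_{\tilde V}\tilde g_{\tilde V}^p(\tilde y,Tx)\,d\tilde y$. The inner integral is bounded by $C(n,\lambda,\Lambda,\rho,p)$ uniformly in the pole — by the first bound applied to $(\tilde\phi,\tilde V)$ in the boundary case, and by its interior analogue \cite[Theorem 1.1]{L} run through the same dyadic scheme in the interior case — and since $t^{\frac n2+p(1-\frac n2)}=t^{\frac n2-\frac{(n-2)p}{2}}\sim|S_\phi(x_0,t)|^{1-\frac{n-2}{n}p}$, the asserted estimate follows.

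The dyadic summation is routine; the main obstacle is the bookkeeping in the second part. One must verify that the affine map normalizing a \emph{boundary} section $S_\phi(x_0,t)$ preserves both the pinching $\lambda\le\det D^2\phi\le\Lambda$ and the structural conditions \eqref{global-tang-int}--\eqref{global-sep} (so that the first bound applies in the normalized picture), that the Green's function of $L_\phi$ rescales with precisely the factor $t^{1-n/2}$, and that the volume estimate $|S_\phi(x_0,h)|\sim h^{n/2}$ holds uniformly over all boundary points and all small heights; these follow from combining the scaling properties of $L_\phi$ with the boundary Localization Theorem \ref{main_loc} and the geometry of boundary sections developed in Sections \ref{prelim_sec}--\ref{quasi_sec}.
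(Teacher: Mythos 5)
Your argument for the first inequality is sound and essentially equivalent to the paper's: the paper integrates the distribution function $|\{g_\Omega(\cdot,x_0)>s\}|\le C s^{-n/(n-2)}$ (for $s$ above a universal threshold) via the layer-cake formula, whereas you decompose $S_\phi(x_0,c_\ast)$ into dyadic annuli $A_k$ and use Theorem \ref{Gthm} plus the volume estimate $|S_\phi(x_0,h)|\sim h^{n/2}$ on each; these are two presentations of the same summation, and your handling of $\Omega\setminus S_\phi(x_0,c_\ast)$ by the maximum principle matches the ``Claim'' inside the paper's proof of Theorem \ref{Gthm}.

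The second inequality is where there is a genuine gap. You propose to normalize the section $V=S_\phi(x_0,t)$ by an affine map $T$, pass to $\tilde\phi$, $\tilde V=T(V)$, and then invoke the first inequality for the pair $(\tilde\phi,\tilde V)$. But $\tilde V=S_{\phi_h}(\cdot,1)$ is a \emph{section}, not the full rescaled domain $\Omega_h$, and it does not inherit \eqref{global-tang-int}--\eqref{global-sep}: only the portion $\p\tilde V\cap\p\Omega_h$ carries the quadratic separation \eqref{global-sep} and the uniform interior-ball condition (via Lemma \ref{sep-lem}), while the level-set portion $\{\phi_h=1\}\cap\Omega_h$ need not satisfy either, and the ``corners'' where the two pieces of $\p\tilde V$ meet are not covered. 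So the first inequality cannot simply be ``applied in the normalized picture'' as you assert; the verification you flag as bookkeeping is in fact the crux, and it does not follow from the Localization Theorem alone. (Replacing $\tilde V$ by the larger domain $\Omega_h$ does not help either, since $\Omega_h$ has diameter $\sim t^{-1/2}$ and violates $\Omega_h\subset B_{1/\rho}$.)

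The paper sidesteps this entirely and never rescales: it observes by the maximum principle that $g_{S_\phi(x,2\theta_\ast t)}(y,x)\le g_\Omega(y,x)$, uses the engulfing property $S_\phi(x_0,t)\subset S_\phi(x,\theta_\ast t)$ to recenter the section at the pole $x$, and then reuses the same distribution-function estimate coming from Theorem \ref{Gthm} applied to $g_\Omega$, with the cut-off level shifted to $C_0=C/t^{(n-2)/2}$. This yields $\int g_{S_\phi}^p\le C\,t^{n/2-(n-2)p/2}$ directly, with no need to establish structural conditions for a rescaled subdomain. You should replace the rescaling step by this comparison-and-recentering argument; the rest of your computation (exponent bookkeeping, volume estimates) then goes through.
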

The proofs of Theorem \ref{Gthm} and Corollary \ref{lpbound} will be given in Section \ref{G_sec}.
\begin{remark}
Note that, under the assumptions \eqref{global-tang-int}--\eqref{global-sep}, 
$D^2\phi$ is not bounded. In fact, the best regularity we can have from \eqref{global-tang-int}--\eqref{global-sep} only is $D^2\phi\in L^{1+\varepsilon}(\Omega)$ where 
$\varepsilon$ is a small constant depending on $n,\lambda$ and $\Lambda$. This follows from De Philippis-Figalli-Savin's interior $W^{2, 1+\varepsilon}$ estimates 
\cite{DPFS} for the Monge-Amp\`ere equation combined with Savin's techniques \cite{S3} in obtaining global regularity.

Therefore, Corollary \ref{lpbound} is interesting. It establishes the same global integrability of the Green's function for 
the linearized Monge-Amp\`ere operator as the Green's function of the Laplace operator which
corresponds to $\phi(x)=|x|^2/2$. 

Moreover, Corollary \ref{lpbound} also says that, as a degenerate and singular non-divergence form operator, $L_\phi$ has Green's function with global $L^p-$integrability
higher than that of a typical uniformly elliptic operator in non-divergence form as established by Fabes and Stroock \cite[Corollary 2.4]{FS}.
\end{remark}
Our boundary Harnack estimates in Theorems \ref{carl} and \ref{compa} depend only on the bounds on the Hessian determinant $\det D^{2} \phi$, the quadratic separation of
$\phi$ from its tangent planes on the boundary $\p\Omega$ and the geometry of $\Omega$. Under these assumptions, the linearized Monge-Amp\`ere operator $L_\phi$ is in general 
not uniformly elliptic, i.e., the eigenvalues of $\Phi = (\Phi^{ij})$ are not necessarily bounded away from $0$ and $\infty.$ Moreover, $L_\phi$ can be possibly 
singular near the boundary (see, for example \cite{LS, W}). The 
degeneracy and singularity of $L_\phi$ are the main difficulties in establishing our boundary Harnack inequalities. We handle the degeneracy of $L_\phi$ by working 
as in \cite{CG} with sections of solutions to the Monge-Amp\`ere equations. These sections have the same role as Euclidean balls have in the 
classical theory. To overcome the singularity of $L_\phi$ near the boundary, we use a Localization Theorem at the boundary for solutions to the 
Monge-Amp\`ere equations which was obtained by Savin in \cite{S1,S2}. 

Our main tools in the proofs of the main results are the boundary Localization Theorem for the Monge-Amp\`ere equations \cite{S1, S2}, the 
interior Harnack estimates for solutions to the linearized Monge-Amp\`ere equations which were established in \cite{CG} and fine geometric properties of solutions
to the Monge-Amp\`ere equation which were obtained in \cite{LN} and further  elaborated in this paper. 
Among these, we would like to point out the following geometric property of sections of $\phi$ which is a crucial ingredient in the proof of Theorem \ref{carl}. 
It says quantitatively that sections of solutions to the Monge-Amp\`ere equations share many properties as Euclidean balls.
It is a global version of \cite[Theorem 3.3.10]{G} and could be of independent interest. 
\begin{theorem}
 \label{pst}
  Assume that $\Omega$ and $\phi$ satisfy \eqref{global-tang-int}--\eqref{global-sep}. Fix a universal constant $M$ such that 
any section of $\phi$ with height $M$ contains $\overline{\Omega}$. Then, 
 there exist universal constants $c_0>0$ and $p_1\geq 1$ such that 
 \begin{myindentpar}{1cm}
 (i) if $0< r<s\leq 3, s-r\leq 1, 0<t\leq M$ and $x_1\in S_\phi(x_0, r t)$ where $x_0\in\overline{\Omega}$, then
 $$S_{\phi}(x_1, c_0 (s-r)^{p_1} t)\subset S_{\phi}(x_0, s t);$$
 (ii)  if $0< r<s< 1, 0<t\leq M$ and $x_1\in S_\phi(x_0, t)\backslash S_{\phi}(x_0, st)$ where $x_0\in\overline{\Omega}$, then
 $$S_{\phi}(x_1, c_0 (s-r)^{p_1} t)\cap S_{\phi}(x_0, rt)=\emptyset.$$
 \end{myindentpar}
\end{theorem}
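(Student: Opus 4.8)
The plan is to exploit the boundary Localization Theorem of Savin: at every boundary point $x_0 \in \partial\Omega$, after an affine normalization $A_{x_0,t}$ that fixes $x_0$, the section $S_\phi(x_0, t)$ is comparable to a half-ellipsoid of volume $\sim t^{n/2}$, and it is trapped between two half-balls of radii $c t^{1/2}$ and $C t^{1/2}$ centered at $x_0$ up to the linear map $A_{x_0,t}$; for interior points $x_0 \in \Omega$ one has the analogous normalization via John's lemma together with the pinching $\lambda \le \det D^2\phi \le \Lambda$, and the boundary Localization Theorem controls how the shape degenerates as $x_0$ approaches $\partial\Omega$. The key quantitative point to extract is that for $x_0 \in \overline\Omega$ and heights $t \le M$, the normalizing maps $A_{x_0, t}$ and $A_{x_0, st}$ for comparable heights $t$ and $st$ (with $s \le 3$) differ by a bounded linear map (bounded with universal constants), so that in the normalized picture the sections $S_\phi(x_0, rt)$ and $S_\phi(x_0, st)$ look like concentric half-balls of radii $\sim r^{1/2}t^{1/2}$ and $\sim s^{1/2}t^{1/2}$.

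First I would record, from the boundary Localization Theorem and the Monge-Amp\`ere normalization results recalled in Section \ref{prelim_sec}, the two-sided inclusion
$$
c\, t^{1/2}\, \bigl(E_t \cap \{\text{half-space}\}\bigr) \subset A_{x_0,t}\bigl(S_\phi(x_0,t) - x_0\bigr) \subset C\, t^{1/2}\,\bigl(E_t \cap \{\text{half-space}\}\bigr),
$$
with $E_t$ a normalized ellipsoid whose eccentricity is controlled, and the scaling relation $S_\phi(x_0, \tau t) \approx \sqrt{\tau}\, S_\phi(x_0, t)$ in these normalized coordinates for $\tau$ in a bounded range. Second, for part (i), given $x_1 \in S_\phi(x_0, rt)$, I would estimate $\phi(y) - \phi(x_1) - \nabla\phi(x_1)\cdot(y - x_1)$ for $y \in S_\phi(x_1, \eta t)$ by writing it in terms of the corresponding quantity based at $x_0$: using convexity and the lower/upper quadratic control of $\phi$ restricted to lines through $x_0$ (which follows from the normalization), the "recentering cost" of moving the vertex from $x_0$ to a point $x_1$ at normalized distance $\lesssim \sqrt{r}\sqrt{t}$ is bounded by $C\bigl(\sqrt{s}-\sqrt{r}\bigr)^{-2}$ times $\eta t$ plus the height $rt$; choosing $\eta = c_0 (s-r)^{p_1}$ with $p_1$ large enough (here $p_1 = 2$ should suffice once one converts $\sqrt{s} - \sqrt{r} \gtrsim (s-r)/\sqrt{s} \gtrsim (s-r)$ on the range $s \le 3$) forces $S_\phi(x_1, \eta t) \subset S_\phi(x_0, st)$. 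Third, part (ii) is the dual statement: if $x_1 \in S_\phi(x_0, t) \setminus S_\phi(x_0, st)$, then $x_1$ sits at normalized distance $\gtrsim (\sqrt{t} - \sqrt{st}) = \sqrt{t}(1 - \sqrt{s})$ from the inner section, and the same recentering estimate shows that any point within normalized distance $\lesssim \sqrt{\eta t}$ of $x_1$ stays outside $S_\phi(x_0, rt)$ provided $\eta \le c_0(s-r)^{p_1}$; again this is a one-line quadratic computation once the normalization is in place, using $s - r \le s < 1$ to absorb constants.

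The main obstacle I expect is the uniformity of the normalization as $x_0$ ranges over all of $\overline\Omega$ — in particular passing smoothly between the interior normalization (John's lemma applied to maximal interior sections, valid when $\bar h(x_0)$ is not too small) and the boundary normalization from Savin's theorem (valid when $S_\phi(x_0,t)$ reaches $\partial\Omega$), and verifying that the comparison of $A_{x_0,t}$ with $A_{x_0,st}$ for $s \le 3$ holds with truly universal constants in both regimes and in the intermediate regime where a section centered at an interior point hits the boundary. This is exactly where \eqref{global-tang-int}--\eqref{global-sep} and the boundary Localization Theorem do the heavy lifting, and where the global (as opposed to purely interior) character of Theorem \ref{pst} — versus \cite[Theorem 3.3.10]{G} — resides; I would handle it by splitting into the cases $t \le c\,\bar h(x_0)$ (pure interior, where \cite[Theorem 3.3.10]{G} applies directly after affine normalization) and $t \ge c\,\bar h(x_0)$ (where $S_\phi(x_0,t)$ is comparable to a boundary section $S_\phi(x_0', \tilde t)$ with $x_0' \in \partial\Omega$ the nearest boundary point and $\tilde t \sim t$, reducing to the boundary normalization), and checking the estimates are stable across the overlap. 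Everything else is elementary convexity bookkeeping.
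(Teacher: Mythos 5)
Your overall plan — recenter from $x_0$ to $x_1$, estimate the cross-term coming from the change of tangent plane, and split into interior and boundary regimes — matches the paper's structure in spirit. However, there is a quantitative gap at the very heart of the argument that makes your proposed exponent $p_1 = 2$ (or $3$) fail.

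The key claim you rely on is that after Savin normalization, $S_\phi(x_0, \tau t)$ behaves like a concentric half-ball of radius $\sim \sqrt{\tau}\,\sqrt{t}$, so that the recentering cost is Euclidean in nature and $\eta \lesssim (\sqrt{s}-\sqrt{r})^2$ suffices. This is what happens for $\phi = |x|^2/2$, but it is not what the boundary Localization Theorem gives you. The normalizing map $A_h$ is adapted to a single height $h$; the subsections $S_{\phi_h}(x_{1,h}, \eta)$ within the normalized picture are \emph{not} balls of radius $\sqrt{\eta}$. Under only the pinching $\lambda \le \det D^2\phi \le \Lambda$, interior $C^{1,\alpha}$ estimates give $S_\phi(x, \eta) \subset B(x, C\eta^{\mu})$ with some $\mu < 1/2$, and near $\partial\Omega$ the exponent degenerates further because of the $|\log h|$ factors in Savin's theorem. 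The paper's Lemma \ref{size_sec} (established via Proposition \ref{tan_sec}) gives $S_\phi(y, t) \subset B(y, \bar C t^{\bar\mu})$ globally with $\bar\mu \in (0, 1/8)$, and there is no matching lower bound $\gtrsim t^{1/2}$. The paper's proof then bounds the cross-term $[\nabla\phi_h(x_{1,h}) - \nabla\phi_h(x_{0,h})]\cdot(y - x_{1,h})$ by a constant times $(\eta \bar t)^{\bar\mu}$, using the uniform gradient bound on $\phi_h$ from Lemma \ref{sep-lem}(ii) (for small $t$) or the global $C^{1,\alpha}$ estimate from Lemma \ref{global-gradient} (for $t$ bounded below), and this forces $p_1 = \bar\mu^{-1} > 8$, not $p_1 = 2$. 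You cannot improve to $p_1 = 2$ without additional regularity on $\phi$ beyond the hypotheses.

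A secondary gap is your assertion that $A_{x_0, t}$ and $A_{x_0, st}$ ``differ by a bounded linear map with universal constants.'' This is not established and is delicate: $\|A_h\|, \|A_h^{-1}\| \le k^{-1}|\log h|$, and the composition $A_h A_{sh}^{-1}$ is not obviously universally bounded. The paper sidesteps the issue by applying the Localization Theorem at a single height $h$ comparable to $t$ (at a boundary point that $S_\phi(x_0, 4t)$ reaches, via the engulfing property of Theorem \ref{engulfing2}), and then doing all the estimating for the fixed rescaled function $\phi_h$, rather than comparing two different normalizations. Your case split on $t \lessgtr c\bar h(x_0)$ is reasonable in spirit but would still face the same obstruction regarding the size of subsections; the paper instead splits on $t$ universally small versus $t$ bounded below, and within the small-$t$ regime on whether $S_\phi(x_0, 4t)$ touches $\partial\Omega$. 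To repair the proposal you would need to replace the half-ball heuristic with the actual section diameter bound $\mathrm{diam}\, S_\phi(x_1, \eta t) \lesssim (\eta t)^{\bar\mu}$, combine it with the (bounded, not small) gradient oscillation, and accordingly take $p_1 = \bar\mu^{-1}$.
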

For the existence of the universal constant $M$ in the statement of Theorem \ref{pst}, see Lemma \ref{global-gradient}.
The proof of Theorem \ref{pst} will be given in Section \ref{quasi_sec}. 

\begin{remark} The results of this paper, in particular,  Theorem \ref{carl} and Corollary \ref{lpbound}, have been recently used
by Nguyen and the author \cite{LN3} to establish global $W^{1,p}$ estimates for all $p<\frac{nq}{n-q}$ for solutions to 
the nonhomogeneous linearized Monge-Amp\`ere equations $L_\phi u= f$ when $f$ belongs to $L^q$ 
where $n/2<q\leq n$. 
\end{remark}

\section{Geometry of the Monge-Amp\`ere equations}
\label{prelim_sec}
In this section, we recall the main tools used in the proofs of our main results.
These include properties of sections of solutions to the Monge-Amp\`ere equation and their rescalings using the boundary Localization Theorem, 
the Localization Theorem at the boundary
for solutions to the Monge-Amp\`ere equation, 
and Caffarelli-Guti\'errez's interior Harnack inequality for solutions to the linearized Monge-Amp\`ere equation.
\subsection{Geometry of sections of solutions to the Monge-Amp\`ere equations}
In this section, we assume that
$$\lambda\leq \det D^2 \phi\leq\Lambda~\text{in}~\Omega.$$
Throughout, we use the following volume growth for compactly supported sections:
\begin{lemma} (\cite[Corollary 3.2.4]{G}) \label{vol_lem} If $S_\phi(x, t)\subset\subset \Omega$ then
$$c_1(n,\lambda,\Lambda)t^{\frac{n}{2}}\leq |S_\phi(x, t)|\leq C_1(n,\lambda,\Lambda)t^{\frac{n}{2}}.$$
\end{lemma}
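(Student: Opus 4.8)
The plan is to reduce to a normalized configuration via John's lemma and the affine invariance of the Monge-Amp\`ere equation, and then read off the two bounds by comparing a vertical translate of $\phi$ with explicit paraboloids. First I would subtract from $\phi$ its supporting affine function at $x$, so that without loss of generality $\phi\geq 0$ on $\Omega$, $\phi(x)=0$, and $S:=S_\phi(x,t)=\{y\in\Omega:\phi(y)<t\}$; since $S\subset\subset\Omega$ and $\phi$ is continuous, $\partial S=\{\phi=t\}$ and $0\leq\phi<t$ inside $S$. Applying John's lemma to the bounded open convex set $S$ produces an affine map $T(z)=Az+b$ with $\det A\neq 0$ and $B_1(0)\subseteq T(S)\subseteq B_n(0)$; put $\tilde S:=T(S)$, $d:=|\det A|$, $\omega_n:=|B_1(0)|$ and $\tilde\phi:=\phi\circ T^{-1}$. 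Then $\tilde\phi$ is convex on $\tilde S$, with $0\leq\tilde\phi<t$ in $\tilde S$, $\tilde\phi\equiv t$ on $\partial\tilde S$, minimum value $0$ attained at $z_0:=Tx$, and $\det D^2\tilde\phi=d^{-2}\,(g\circ T^{-1})$, so that $\lambda d^{-2}\leq\det D^2\tilde\phi\leq\Lambda d^{-2}$ in $\tilde S$. Since $\omega_n\leq|\tilde S|\leq n^n\omega_n$ and $|S|=d^{-1}|\tilde S|$, the lemma is equivalent to the two-sided bound $c(n,\lambda,\Lambda)\,t^{-n/2}\leq d\leq C(n,\lambda,\Lambda)\,t^{-n/2}$.

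For the estimate $d\leq C\,t^{-n/2}$, equivalently $|S|\geq c_1 t^{n/2}$, I would compare $\tilde\phi-t$ with the paraboloid $P(z):=\tfrac12(\Lambda d^{-2})^{1/n}\bigl(|z|^2-n^2\bigr)$ on $\tilde S$. One has $MP=\Lambda d^{-2}\,dz\geq\det D^2\tilde\phi\,dz=M(\tilde\phi-t)$ in $\tilde S$, and $P\leq 0=\tilde\phi-t$ on $\partial\tilde S\subset\overline{B_n(0)}$, so the comparison principle for the Monge-Amp\`ere equation yields $P\leq\tilde\phi-t$ throughout $\tilde S$. Evaluating at $z_0$, where $\tilde\phi(z_0)=0$, gives $-\tfrac{n^2}{2}(\Lambda d^{-2})^{1/n}\leq P(z_0)\leq -t$, hence $t\leq\tfrac{n^2}{2}(\Lambda d^{-2})^{1/n}$, and therefore $d\leq C(n,\Lambda)\,t^{-n/2}$.

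For the reverse estimate $d\geq c\,t^{-n/2}$, equivalently $|S|\leq C_1 t^{n/2}$, the point is that the gradient of $\tilde\phi$ is uniformly small on the fixed interior ball $B_{1/2}(0)\subset\tilde S$. Indeed, if $z\in B_{1/2}(0)$ and $\xi\in\partial\tilde\phi(z)$ with $\xi\neq 0$, then $z':=z+\tfrac14\,\xi/|\xi|\in\overline{B_{3/4}(0)}\subset\tilde S$, so $t>\tilde\phi(z')\geq\tilde\phi(z)+\xi\cdot(z'-z)\geq\tfrac14|\xi|$; thus $\partial\tilde\phi\bigl(B_{1/2}(0)\bigr)\subseteq\overline{B_{4t}(0)}$. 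Consequently
\[
\lambda d^{-2}\,\bigl|B_{1/2}(0)\bigr|\ \leq\ M\tilde\phi\bigl(B_{1/2}(0)\bigr)\ =\ \bigl|\partial\tilde\phi\bigl(B_{1/2}(0)\bigr)\bigr|\ \leq\ \omega_n(4t)^n,
\]
which gives $d\geq c(n,\lambda)\,t^{-n/2}$ and, together with the previous step, the lemma.

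The only delicate part of this argument is the reduction in the first paragraph: one has to use $S\subset\subset\Omega$ to guarantee that $\tilde\phi$ takes the \emph{constant} value $t$ on all of $\partial\tilde S$ (so that the boundary data in both paraboloid comparisons is genuinely a constant), and that John's lemma and the change of variables are applied to the correct convex body with the correct Jacobian $d=|\det A|$; once this is set up, each of the two bounds is a single comparison with an explicit paraboloid, respectively a one-line Lipschitz estimate for $\nabla\tilde\phi$ on an interior ball, and no further work with the lower-order geometry of $\phi$ is needed.
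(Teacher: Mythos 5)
Your argument is correct, and it is essentially the standard proof of the cited result: the paper itself offers no proof of this lemma, simply quoting \cite[Corollary 3.2.4]{G}, and the proof in that reference runs along the same lines you use — normalize the section by John's lemma, note that the rescaled function has Monge--Amp\`ere measure comparable to $d^{-2}$ with $d=|\det A|$, and then bound $d$ from above by a paraboloid comparison (using $\det D^2\tilde\phi\le\Lambda d^{-2}$ and the fact that $\tilde\phi-t$ drops from $0$ on the boundary to $-t$ at the center) and from below by the gradient-image estimate $\partial\tilde\phi(B_{1/2})\subseteq \overline{B_{4t}}$ together with $\det D^2\tilde\phi\ge\lambda d^{-2}$. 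All the steps check out: the reduction to $\phi\ge 0$, $\phi=t$ on $\partial S$ uses $S\subset\subset\Omega$ exactly as you say, the Jacobian bookkeeping $|S|=d^{-1}|\tilde S|$ with $\omega_n\le|\tilde S|\le n^n\omega_n$ is right, and both comparison steps are legitimate applications of the Monge--Amp\`ere comparison principle and the definition of the Monge--Amp\`ere measure. No gaps.
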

We will use the following consequence of $C^{1,\alpha}$ estimates:
\begin{lemma} (\cite[Theorem 3.3.8]{G})\label{C1alpha}
 If $B(0, 1)\subset S_\phi(x, r)\subset B(0, n)$ then for any $t\leq r/2$, we have
 $$S_\phi(x, t)\subset B(x, C_1t^{\mu})$$
 for universal constants $\mu(n,\lambda,\Lambda)\in (0, 1)$ and $C_1$ depending only on $n,\lambda$, and $\Lambda$.
\end{lemma}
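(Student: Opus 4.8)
\emph{Proof proposal.} The plan is to reduce the inclusion to a scale-uniform strict convexity estimate for a normalized solution, and prove that estimate by a scaling iteration built on the volume bounds of Lemma~\ref{vol_lem}. First I would translate so that $x=0$ and subtract the supporting plane of $\phi$ at $x$, setting $\psi(y):=\phi(y+x)-\phi(x)-\nabla\phi(x)\cdot y$; then $\psi\ge0$, $\psi(0)=0$, $\nabla\psi(0)=0$, $\lambda\le\det D^2\psi\le\Lambda$, $S_\psi(0,h)=S_\phi(x,h)-x$, and the hypothesis says precisely that $S_\psi(0,r)$ is normalized, i.e.\ squeezed between two concentric balls whose radii have ratio $n$ (in particular, comparing $\psi$ with the paraboloid of opening $\lambda^{1/n}$ shows $r$ is comparable to a universal constant). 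Since $t\le r/2$ forces $S_\psi(0,t)\subset\{\psi<r/2\}$, the assertion $S_\phi(x,t)\subset B(x,C_1t^{\mu})$ becomes equivalent to the lower growth bound $\psi(y)\ge(|y|/C_1)^{1/\mu}$ for all $y\in\{\psi<r/2\}$ — one direction giving $|y|<C_1t^{\mu}$ whenever $\psi(y)<t$, the converse by taking $t=\psi(y)$.

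The heart of the matter is a one-scale shrinking claim: there exist universal $k_0\in\mathbb{N}$ and $\theta\in(0,1)$, with $\theta$ small enough to absorb the John distortion incurred by a later renormalization, such that $\operatorname{diam}S_\psi(0,2^{-k_0}r)\le\theta\operatorname{diam}S_\psi(0,r)$ for every normalized $\psi$ as above. To iterate this I would use the affine invariance of the configuration: for $0<h<r$ choose, via John's lemma, an affine $A_h$ with $B(0,1)\subset A_h(S_\psi(0,h))\subset B(0,n)$; since $S_\psi(0,h)\subset\subset\{\psi<r\}$, Lemma~\ref{vol_lem} gives $|S_\psi(0,h)|\asymp h^{n/2}$, hence $|\det A_h|\asymp h^{-n/2}$, so that the rescaled functions $\psi_h(z):=h^{-1}\big[\psi(A_h^{-1}z)-(\text{affine})\big]$ are again normalized solutions with Hessian determinant in $[c\lambda,C\Lambda]$. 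Applying the claim to $\psi$, then to the renormalization of $S_\psi(0,2^{-k_0}r)$, and so on, gives $\operatorname{diam}S_\psi(0,2^{-jk_0}r)\le C(n)\,\theta^{j}$ for all $j\ge0$; since $h\mapsto\operatorname{diam}S_\psi(0,h)$ is nondecreasing and $r\asymp1$, this interpolates to $\operatorname{diam}S_\psi(0,t)\le C_1t^{\mu}$ for all $t\le r/2$, with $\mu:=\log(1/\theta)/(k_0\log2)\in(0,1)$, which is the desired bound.

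The one-scale claim itself I would prove by compactness. If it failed for every $k_0$, there would be normalized potentials $\psi^{(m)}$ and points $y_m$ with $|y_m|\ge\theta$ and $\psi^{(m)}(y_m)<2^{-m}$. Normalized convex functions are uniformly Lipschitz and bounded on compact subsets of their domains, which contain a fixed ball, so a subsequence converges locally uniformly to a convex $\psi^{(\infty)}$ with $\psi^{(\infty)}\ge0$, $\psi^{(\infty)}(0)=0$, and $\lambda\le\det D^2\psi^{(\infty)}\le\Lambda$ in the Aleksandrov sense — both bounds being stable under this convergence, and Lemma~\ref{vol_lem} preventing the limiting domain from collapsing. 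Passing to the limit yields $y_\infty$ with $|y_\infty|\ge\theta$ and $\psi^{(\infty)}(y_\infty)=0$; by convexity $\psi^{(\infty)}\equiv0$ on the nondegenerate segment $[0,y_\infty]$, so the plane $z=0$ supports the graph of $\psi^{(\infty)}$ and touches it at the two distinct points $0$ and $y_\infty$, with $0$ interior. This contradicts Caffarelli's theorem that a convex solution of $\lambda\le\det D^2u\le\Lambda$ is strictly convex at each interior point, and proves the claim.

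The step I expect to be the real obstacle is this one-scale gain: a priori a section $S_\psi(0,t)$ of a normalized solution could remain a long, thin sliver reaching almost to $\partial S_\psi(0,r)$ for every small $t$, and ruling this out in a scale-uniform fashion is the quantitative content of Caffarelli's strict convexity theorem. (Equivalently, dualizing, it is the interior $C^{1,\alpha}$ estimate applied to the Legendre transform $\psi^\ast$: the bound $\psi^\ast(p)\le C|p|^{1+\alpha}$ near $0$ is exactly the sought lower bound on $\psi$ — which is the sense in which this lemma is ``a consequence of $C^{1,\alpha}$ estimates.'') This input depends essentially on $\det D^2\phi\ge\lambda>0$, through the volume lower bound of Lemma~\ref{vol_lem}; once it is available, the iteration and the translation back to $\phi$ are routine.
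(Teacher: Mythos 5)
The paper offers no proof of this lemma: it is quoted verbatim from \cite[Theorem 3.3.8]{G}, as a known consequence of Caffarelli's interior strict convexity/$C^{1,\alpha}$ theory (your parenthetical remark about the Legendre transform is in fact the route the paper's phrase ``consequence of $C^{1,\alpha}$ estimates'' alludes to). Your overall scheme --- reduce to a one-scale diameter decay for normalized solutions, renormalize with John's lemma using Lemma~\ref{vol_lem} to keep the determinant pinched, choose $\theta$ small to beat the affine distortion, and interpolate in $t$ --- is the standard architecture and is fine in outline. The genuine gap is in the compactness step, at the exact point you flag as the heart of the matter.

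The theorem you invoke, ``a convex solution of $\lambda\le\det D^2u\le\Lambda$ is strictly convex at each interior point,'' is false for $n\ge 3$: Pogorelov's example $u(x',x_n)=|x'|^{2-2/n}(1+x_n^2)$ has pinched Monge--Amp\`ere measure near the origin and vanishes on a segment of the $x_n$-axis. Caffarelli's actual theorem \cite{C1} says only that the contact set of a supporting plane has no extremal points in the interior, i.e.\ a degeneracy segment must reach the boundary of the domain. With the data you carry to the limit ($u_\infty\ge 0$, $u_\infty(0)=0$, pinched determinant, $u_\infty=0$ on a segment of length $\gtrsim\theta$) the limiting configuration is exactly of Pogorelov type and is \emph{not} contradictory, because locally uniform convergence in the interior does not preserve the boundary condition $\psi^{(m)}=r_m$ on $\partial S_m$: the limit could a priori vanish along a segment running from $0$ to $\partial\Omega_\infty$. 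To close the argument you must retain quantitative boundary information that does pass to the limit. The standard way is the Aleksandrov maximum principle applied to $v_m:=\psi^{(m)}-r_m$, which vanishes on $\partial S_m$ (note this uses the implicit hypothesis that the section is compactly contained in the set where the equation holds, which you should state): $r_m-\psi^{(m)}(x)\le C_n\left(\Lambda|S_m|\right)^{1/n}\operatorname{diam}(S_m)^{(n-1)/n}\operatorname{dist}(x,\partial S_m)^{1/n}$. Combined with $r_m\ge c_0(n,\lambda)$ (your paraboloid comparison), this gives $\psi^{(m)}\ge c_0/2$ in a collar of universal width around $\partial S_m$, and also $\operatorname{dist}(0,\partial S_m)\ge c$, which you need anyway to justify the unproved assertion that $0$ remains interior in the limit. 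These bounds survive locally uniform convergence, confine $\{u_\infty=0\}$ to a compact convex subset of the open limit domain, and then any extremal point of that set (it has at least two, being a nondegenerate convex set) is interior, contradicting the correct form of Caffarelli's theorem. With this repair, and with the renormalizations always taken of the original function's sections (so that $\det A_h\asymp h^{-n/2}$ keeps the determinant bounds from compounding), the iteration and the translation back to $\phi$ go through as you describe.
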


The Caffarelli-Guti\'errez's Harnack inequality for the linearized Monge-Amp\`ere equations states:
\begin{theorem}(\cite[Theorem 5]{CG}) \label{Holder_thm}
For each compactly supported section $S_\phi (x, t)\subset\subset\Omega$, and any nonnegative solution $v$ of $L_\phi v=0$ in $S_\phi (x, t)$, we have
$$\sup_{S_\phi(x,2\tau t)} v\leq C_1(n,\lambda,\Lambda) \inf_{S_\phi(x, 2\tau t)} v$$
for a universal $\tau\in (0, 1/2)$ depending only on $n$, $\lambda$ and $\Lambda$.
\end{theorem}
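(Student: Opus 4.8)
The plan is to follow Caffarelli and Guti\'errez \cite{CG} and reduce the statement to two complementary facts — a weak Harnack inequality for nonnegative supersolutions of $L_\phi$ and a local boundedness estimate for nonnegative subsolutions, both proved on sections playing the role of Euclidean balls — and then to chain them, using that a solution of $L_\phi v=0$ is simultaneously a super- and a subsolution. The first step is to exploit the affine invariance of the pair $(\det D^2\phi, L_\phi)$: by John's lemma there is an affine map $Ty=Ay+b$, volume-preserving up to a universal factor (so that $\det A\sim t^{-n/2}$ by Lemma \ref{vol_lem}), normalizing $S_\phi(x,t)$ to lie between $B(0,1)$ and $B(0,n)$. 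The rescaled potential $\tilde\phi(y)=(\det A)^{-2/n}\big[\phi(A^{-1}(y-b))-\ell_x(y)\big]$, with $\ell_x$ the supporting affine function of $\phi$ at $x$, still satisfies $\lambda\le\det D^2\tilde\phi\le\Lambda$, sections transform as $T\big(S_\phi(x,s)\big)=S_{\tilde\phi}(\tilde x,(\det A)^{-2/n}s)$, and $\tilde v=v\circ T^{-1}$ solves $L_{\tilde\phi}\tilde v=0$ after pulling the harmless constant multiple of the cofactor matrix out. So it is enough to prove the inequality for a normalized section, on which the small interior sections $S_{\tilde\phi}(\tilde x,s)$ shrink to points in a controlled H\"older fashion by Lemma \ref{C1alpha}, the Monge-Amp\`ere measure is doubling, and sections enjoy the engulfing property (\cite{G}); these structural facts are precisely what lets sections substitute for balls in a Krylov--Safonov scheme.

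The analytic heart is a \emph{critical density estimate}: there are universal $\mu\in(0,1)$ and $c_0>0$ such that if $w\ge0$, $L_\phi w\ge0$ in a normalized section $S$, and $|\{w\ge1\}\cap S|\ge\mu|S|$, then $w\ge c_0$ on a fixed inner section $S'\subset S$. The degeneracy of $L_\phi$ — the eigenvalues of $\Phi=(\Phi^{ij})$ are not bounded above or below — is handled here by observing that $\det\Phi=(\det D^2\phi)^{n-1}\in[\lambda^{n-1},\Lambda^{n-1}]$ is bounded two-sidedly, so the Aleksandrov--Bakelman--Pucci maximum principle on the section, applied against a barrier built from a power of $\phi-\ell_x$ (hence vanishing on $\partial S$ and with $L_\phi$ of it controlled using only $\lambda\le\det D^2\phi\le\Lambda$), produces a lower bound for the measure of the relevant contact set depending only on $n,\lambda,\Lambda$. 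I expect this barrier/ABP step to be the principal obstacle: one must check that the barrier is an admissible test function on a possibly very eccentric section, estimate $L_\phi$ of a concave power of $\phi-\ell_x$ intrinsically, and combine with Caffarelli's strict convexity and interior regularity of $\phi$ to lower-bound the contact set and thereby run the argument.

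Granting the critical density estimate, I would iterate it through a Calder\'on--Zygmund-type decomposition adapted to sections — legitimate because (normalized domain, sections, $\det D^2\phi$) is a space of homogeneous type by the doubling and engulfing properties — to obtain, for every nonnegative supersolution $w$ with $\inf_{S_\phi(x,\tau t)}w\le1$, the power decay bound
$$\big|\{w>s\}\cap S_\phi(x,\tau t)\big|\le C\,s^{-\varepsilon}\,\big|S_\phi(x,\tau t)\big|,\qquad s\ge1,$$
equivalently the weak Harnack inequality
$$\Big(\tfrac{1}{|S_\phi(x,\tau t)|}\int_{S_\phi(x,\tau t)}w^{\varepsilon}\Big)^{1/\varepsilon}\le C\,\inf_{S_\phi(x,\tau t)}w .$$
A dual iteration of the same estimate applied to truncations and powers of nonnegative subsolutions yields the local maximum principle
$$\sup_{S_\phi(x,\tau t)}w\le C\,\Big(\tfrac{1}{|S_\phi(x,2\tau t)|}\int_{S_\phi(x,2\tau t)}w^{\varepsilon}\Big)^{1/\varepsilon}.$$
Finally, since a nonnegative solution $v$ of $L_\phi v=0$ in $S_\phi(x,t)$ is at once a supersolution and a subsolution, applying the last two displays on $S_\phi(x,2\tau t)\subset\subset S_\phi(x,t)$ — the innermost radius being absorbed into the universal constant $\tau\in(0,1/2)$ — chains to
$$\sup_{S_\phi(x,2\tau t)}v\le C_1(n,\lambda,\Lambda)\,\inf_{S_\phi(x,2\tau t)}v,$$
which is the assertion; throughout, Lemma \ref{vol_lem} is used to convert the measure-theoretic steps on sections into the claimed universal constants.
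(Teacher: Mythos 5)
The paper does not prove this statement at all: Theorem \ref{Holder_thm} is quoted from Caffarelli and Guti\'errez \cite[Theorem 5]{CG}, and your outline is essentially a faithful reconstruction of that original argument (affine normalization of the section, a critical density estimate via ABP with a barrier built from $\phi$ minus its supporting affine function, a Calder\'on--Zygmund/covering decomposition adapted to sections yielding power decay and the weak Harnack inequality for supersolutions, a local maximum principle for subsolutions, and chaining the two). The only slip is the normalization exponent: to keep $\det D^2\tilde\phi\in[\lambda,\Lambda]$ the rescaled potential must carry the factor $(\det A)^{2/n}\sim t^{-1}$ rather than $(\det A)^{-2/n}$, so that $S_\phi(x,s)$ becomes a section of $\tilde\phi$ of height comparable to $s/t$; with that corrected, your plan matches the cited proof, whose technical substance (the critical density and covering lemmas you only name) is exactly what the citation to \cite{CG} supplies.
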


We also need the following Vitali type covering lemma.
\begin{lemma}(\cite[Lemma 2.3]{S3}) Let $D$ be a compact set in $\Omega$ and assume that for each $x\in D$ we associate a corresponding section $S_\phi(x, h)\subset\subset\Omega$. Then 
we can find a finite number of these sections $S_\phi(x_i, h_i), i=1,\cdots, m,$ such that
$$D \subset \bigcup_{i=1}^m S_\phi(x_i, h_i),~\text{with}~ S_\phi(x_i, \delta h_i)~\text{disjoint},$$
where $\delta>0$ is a small constant that depends only on $\lambda$, $\Lambda$ and $n$.
\label{cov_lem}
\end{lemma}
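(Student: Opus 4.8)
The plan is to prove this by the classical Vitali greedy-selection argument, with the \emph{engulfing property} of sections of solutions to the Monge-Amp\`ere equation playing the role that the triangle inequality (equivalently: two intersecting balls of comparable radii lie in a fixed dilate of the larger) plays in the Euclidean Vitali lemma. Recall (see \cite[Section 3.3]{G}) that under $\lambda\le\det D^2\phi\le\Lambda$ there is a universal $\theta_0=\theta_0(n,\lambda,\Lambda)>1$ such that: if $y\in S_\phi(x,t)$ and $S_\phi(x,\theta_0 t)\subset\subset\Omega$, then $S_\phi(x,t)\subset S_\phi(y,\theta_0 t)$ (hence also $x\in S_\phi(y,\theta_0 t)$). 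Chaining this through a common point of an intersection gives the form we need: there is a universal $K=\theta_0^2$ such that if $S_\phi(x,t_1)\cap S_\phi(y,t_2)\neq\emptyset$ with $t_1\le t_2$ and $S_\phi(y,Kt_2)\subset\subset\Omega$, then $S_\phi(x,t_1)\subset S_\phi(y,Kt_2)$. Set $\delta:=(2K)^{-1}$, a constant depending only on $n,\lambda,\Lambda$.

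The selection is as follows. Since each associated section $S_\phi(x,h)\subset\subset\Omega$ and $\Omega$ is bounded, Lemma \ref{vol_lem} gives a finite upper bound $H_0$ for all heights $h=h_x$. Put $D_1:=D$; having chosen $x_1,\dots,x_{k-1}$, set $D_k:=D\setminus\bigcup_{i<k}S_\phi(x_i,h_i)$, and if $D_k\neq\emptyset$ let $\sigma_k:=\sup_{x\in D_k}h_x\ (\le H_0)$ and pick $x_k\in D_k$ with $h_k:=h_{x_k}>\sigma_k/2$; stop when $D_k=\emptyset$. Each $S_\phi(x_i,h_i)$ is relatively compact in $\Omega$ and cut out by a strict inequality on a continuous function, hence open in $\RR^n$; so the $D_k$ are nested compact sets and $\sigma_1\ge\sigma_2\ge\cdots$.

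Next I would show the shrunk sections $S_\phi(x_i,\delta h_i)$ are pairwise disjoint. Suppose $i<j$ and they meet. Since $x_j\in D_j\subset D_{i+1}$, on one hand $x_j\notin S_\phi(x_i,h_i)$, and on the other hand $h_j\le\sigma_j\le\sigma_i<2h_i$. Apply the engulfing corollary with $x=x_j$, $t_1=\delta h_j$, $y=x_i$, $t_2=2\delta h_i$: then $t_1\le t_2$, the two sections meet, and $S_\phi(x_i,Kt_2)=S_\phi(x_i,h_i)\subset\subset\Omega$, so $S_\phi(x_j,\delta h_j)\subset S_\phi(x_i,h_i)$ and in particular $x_j\in S_\phi(x_i,h_i)$ --- a contradiction. (This is where the value of $\delta$ is used; note that one measures $x_j$'s position through the enlarged section $S_\phi(x_i,2\delta h_i)$, not $S_\phi(x_i,\delta h_i)$, precisely so that the only section fed to the engulfing property, $S_\phi(x_i,h_i)$, is compactly contained in $\Omega$.)

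Finally I would show the process stops after finitely many steps, which simultaneously gives the finiteness of the family and the covering $D\subset\bigcup_{i=1}^m S_\phi(x_i,h_i)$. If it never stopped, we would obtain infinitely many pairwise disjoint $S_\phi(x_i,\delta h_i)\subset\Omega$, each of measure $\ge c_1(n,\lambda,\Lambda)(\delta h_i)^{n/2}$ by Lemma \ref{vol_lem}; as $\Omega$ is bounded, $\sum_i(\delta h_i)^{n/2}<\infty$, so $h_i\to0$ and hence $\sigma_i<2h_i\to0$. By Cantor's intersection theorem the nested nonempty compact sets $D_k$ share a point $x^*$, and then $0<h_{x^*}\le\sigma_k\to0$ --- absurd. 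Hence $D_{m+1}=\emptyset$ for some finite $m$, so $D\subset\bigcup_{i=1}^m S_\phi(x_i,h_i)$ with the $S_\phi(x_i,\delta h_i)$ disjoint. The only real ingredient beyond classical Vitali bookkeeping is the engulfing property (and the attendant care to apply it only to sections lying compactly in $\Omega$), which is standard in the Caffarelli-Guti\'errez theory; I expect that to be the main, though mild, point.
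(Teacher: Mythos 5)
Your overall strategy is the right one, and it is essentially the argument behind the cited source (the paper itself gives no proof, quoting \cite[Lemma 2.3]{S3}): greedy selection by approximately maximal height, disjointness of the shrunk sections via an engulfing argument, and termination via the volume lower bound of Lemma \ref{vol_lem} plus a Cantor intersection argument. The bookkeeping (openness of compactly contained sections, $h_j\le \sigma_j\le\sigma_i<2h_i$, the finiteness and covering at the end) is all correct. The genuine gap is in the key geometric step, namely your ``engulfing corollary.'' You quote engulfing in the form: if $y\in S_\phi(x,t)$ and $S_\phi(x,\theta_0 t)\subset\subset\Omega$, then $S_\phi(x,t)\subset S_\phi(y,\theta_0 t)$, and claim that chaining through a common point $z$ gives: intersecting sections with $t_1\le t_2$ and $S_\phi(y,Kt_2)\subset\subset\Omega$ force $S_\phi(x,t_1)\subset S_\phi(y,Kt_2)$, $K=\theta_0^2$. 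But the chain requires an application of engulfing centered at $z$ at height $\theta_0 t_2$, and with the form you quoted this demands $S_\phi(z,\theta_0^2t_2)\subset\subset\Omega$; in your application this reads $S_\phi(z,h_i)\subset\subset\Omega$ for some $z\in S_\phi(x_i,\delta h_i)$, which is not among the hypotheses --- only $S_\phi(x_i,h_i)$ and $S_\phi(x_j,h_j)$ are assumed compactly contained, and a section of height $h_i$ centered at an off-center point can leave $\Omega$ when $S_\phi(x_i,h_i)$ barely fits. Since in this interior setting only $\lambda\le\det D^2\phi\le\Lambda$ is assumed, you cannot appeal to the containment-free global engulfing of Theorem \ref{engulfing2} either. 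So, as written, the hypotheses of the one lemma you feed into the Vitali machinery are not verified exactly at the point where the compact-containment issue (which you yourself flag) bites.

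The gap is fixable without changing your architecture: use the dual, equally standard, interior form of engulfing, in which the compactly contained section is the engulfing one --- if $y\in S_\phi(x,t)$ and $S_\phi(x,\theta_0 t)\subset\subset\Omega$, then $S_\phi(y,t)\subset S_\phi(x,\theta_0 t)$. Concretely, in the disjointness step: first apply your form at $x_j$ with height $\delta h_j$ (legitimate since $\theta_0\delta<1$ and $S_\phi(x_j,h_j)\subset\subset\Omega$) to get $x_j\in S_\phi(z,\theta_0\delta h_j)\subset S_\phi(z,2\theta_0\delta h_i)$; then apply the dual form with $x=x_i$, $t=2\theta_0\delta h_i$, $y=z$, which is legitimate because $z\in S_\phi(x_i,\delta h_i)\subset S_\phi(x_i,t)$ and $S_\phi(x_i,\theta_0 t)=S_\phi(x_i,h_i)\subset\subset\Omega$ by your choice $\delta=(2\theta_0^2)^{-1}$; this yields $S_\phi(z,2\theta_0\delta h_i)\subset S_\phi(x_i,h_i)$, hence $x_j\in S_\phi(x_i,h_i)$, the desired contradiction. (Accordingly, your intermediate corollary should carry containment hypotheses at both centers, not only at $y$; both are available in the application.) Alternatively, in the global setting \eqref{global-tang-int}--\eqref{global-sep} in which this paper actually invokes the covering lemma, Theorem \ref{engulfing2} holds for all sections with no containment hypothesis, and then your chain (or the quasi-metric inequality of Lemma \ref{quasi}) goes through verbatim --- but that proves a weaker statement than the interior lemma of \cite{S3} as quoted.
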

We recall the following global results on sections.
\begin{theorem} (Engulfing property of sections, \cite[Theorem 2.1]{LN})
\label{engulfing2}
Assume that $\Omega$ and $\phi$ satisfy \eqref{global-tang-int}--\eqref{global-sep}. Then, there exists a universal constant $\theta_{\ast}>1$ such that if $y\in S_\phi(x, t)$ with $x\in\overline{\Omega}$ and $t>0$, then $S_\phi(x, t)\subset S_\phi(y, \theta_{\ast}t).$
\end{theorem}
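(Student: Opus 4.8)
The plan is to reduce the global engulfing property to its classical interior counterpart, invoking Savin's boundary Localization Theorem \cite{S1, S2} to control the sections that meet $\p\Omega$. First I would dispose of large heights. Fix a universal $M$ with $S_\phi(x, M)\supseteq\overline{\Omega}$ for every $x\in\overline{\Omega}$ (such an $M$ exists, cf.\ Lemma \ref{global-gradient}); since every section is a subset of $\overline{\Omega}$, once $\theta_\ast t\ge M$ one has $S_\phi(x, t)\subseteq\overline{\Omega}=S_\phi(y, M)\subseteq S_\phi(y,\theta_\ast t)$ and there is nothing to prove, so it suffices to treat $0<t\le\varepsilon_0 M$ for a small universal $\varepsilon_0$. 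I would then split into an interior regime $t\le c_\ast\bar h(x)$ and a boundary regime $t>c_\ast\bar h(x)$, for a small universal $c_\ast$ to be fixed.

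In the interior regime, one has $\overline{S_\phi(x, h)}\subseteq S_\phi(x, h')\subset\Omega$ whenever $h<h'\le\bar h(x)$, so $S_\phi(x,\theta_0 t)\subset\subset\Omega$ provided $c_\ast\theta_0<1$, where $\theta_0=\theta_0(n,\lambda,\Lambda)$ is the constant in the interior engulfing theorem of Caffarelli and Guti\'errez (see \cite{CG}, cf.\ \cite[Section~3.3]{G}). Since $y\in S_\phi(x, t)$, that theorem gives at once $S_\phi(x, t)\subset S_\phi(y,\theta_0 t)$. (A self-contained treatment is also available: after an affine normalization placing $S_\phi(x, t)$ in a balanced position $B_{\alpha_n}\subset\{\tilde\phi<1\}\subset B_1$ with universal bounds on $\det D^2\tilde\phi$, one bounds $\tilde\phi-\ell_{\bar y}$ on $\{\tilde\phi<1\}$ by combining the Aleksandrov maximum principle with convexity estimates against the constant boundary value $\tilde\phi\equiv1$; but it is cleaner to quote the classical statement.)

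In the boundary regime, the point $x$ lies, in the intrinsic sense, within a bounded multiple of the scale $t$ of $\p\Omega$, and here I would use the good-shape description of $S_\phi(x, t)$ furnished by Savin's boundary Localization Theorem and its consequences \cite{S1, S2, LS}: there exist a linear map $A$ with $\det A=1$ and $\|A\|,\|A^{-1}\|$ bounded by a slowly growing universal function of $t$, and a number $a=a(x,t)\ge0$, such that $A\big(S_\phi(x, t)-x\big)$ is comparable, with universal constants, to $t^{1/2}\big(B_1\cap\{z_n>-a\}\big)$; the rescaled potential $\tilde\phi(z)=t^{-1}\phi\big(x+t^{1/2}A^{-1}z\big)$ has $\det D^2\tilde\phi$ pinched between universal constants, and when $a$ is small it still separates quadratically from its tangent planes along the flat face $\{z_n=-a\}$, an inherited form of \eqref{global-sep}. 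Sections transform covariantly under this rescaling, so it is enough to prove engulfing, with a universal constant, in the normalized configuration; there one runs the natural case analysis --- a point deep inside the normalized section has universally bounded gradient, a point near the flat face is handled by a barrier comparison using the inherited quadratic separation, and a point near the curved face is the interior situation again. Because the normalized forms of $S_\phi(x, t)$ and $S_\phi(y,\theta_\ast t)$ have heights comparable to $1$, the slowly growing norms of $A$ enter both inclusions on the same footing and cancel, and undoing the normalization produces a universal $\theta_\ast$.

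The decisive difficulty is the boundary regime: under \eqref{global-tang-int}--\eqref{global-sep} the Hessian $D^2\phi$ is unbounded near $\p\Omega$, so the boundary sections degenerate and the John ellipsoids of $S_\phi(x_0, h)$, $x_0\in\p\Omega$, have eccentricity blowing up as $h\to0$; the only reason the engulfing constant can still be kept universal is Savin's sharp control of this degeneracy. One must also take care that the good-shape reduction for an interior $x$ near $\p\Omega$ does not itself invoke the engulfing property --- this is arranged by first establishing the statement for $x\in\p\Omega$ and then propagating it to the nearby interior points using only Savin's geometric description and \eqref{global-sep}.
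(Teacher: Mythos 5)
The paper does not prove Theorem~\ref{engulfing2}: it is quoted verbatim from \cite[Theorem~2.1]{LN}, so there is no in-paper proof to compare against. I can, however, assess your sketch against the tools this paper itself develops (its Lemma~\ref{sep-lem}, Proposition~\ref{tan_sec}, and the computation inside the proof of Theorem~\ref{pst}), which reflect the same machinery \cite{LN} uses.

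Your skeleton is the right one: reduce to $t$ small via the universal $M$, handle $t\le c_*\bar h(x)$ by the interior Caffarelli--Guti\'errez engulfing, and treat the remaining boundary regime by Savin's Localization Theorem and covariance of sections under the normalizing affine map. Two places, though, are left at the level of assertion and are precisely where the work lies.

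First, once you have normalized at a boundary point, the engulfing does not require your three-way "deep interior / near flat face / near curved face'' case analysis with a barrier near the flat face. The mechanism is a single identity plus one estimate: writing $\delta_\phi(y,z)=\delta_\phi(x,z)-\delta_\phi(x,y)+\bigl(\nabla\phi(x)-\nabla\phi(y)\bigr)\cdot(z-y)$, the cross term rewrites under $w\mapsto h^{-1/2}A_h(w-x_0)$ as $h\bigl(\nabla\phi_h(x_h)-\nabla\phi_h(y_h)\bigr)\cdot(z_h-y_h)$ with the $A_h$'s cancelling exactly (since $\nabla\phi=h^{1/2}A_h^{\top}\nabla\phi_h$ while $z-y=h^{1/2}A_h^{-1}(z_h-y_h)$); then $|\nabla\phi_h|\le C$ on $S_{\phi_h}(0,\delta)$ (Lemma~\ref{sep-lem}(ii)) and a universal diameter bound for $S_{\phi_h}(0,\delta)$ give $\delta_\phi(y,z)<(1+C')t$ directly. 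Your phrase that the "slowly growing norms of $A$ enter both inclusions on the same footing and cancel'' is vaguer than the truth: they never enter, by covariance, and the engulfing constant is controlled by the rescaled gradient bound, not by the size of $A_h$. You should make that identity and that lemma the pivot of the boundary regime.

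Second, "first establish the statement for $x\in\partial\Omega$ and then propagate it to nearby interior points'' is asserted but not carried out, and this is the step with a latent circularity: moving from $x$ to the tangent boundary point $z=\partial\Omega\cap\partial S_\phi(x,\bar h(x))$ looks like it already needs engulfing. The non-circular route is to quote the shape theorem for the maximal interior section (Proposition~\ref{tan_sec}(i)--(ii), proved directly from the Localization Theorem at $z$), which gives $S_\phi(x,t)\subset S_\phi(z,Ct)$ for $t\gtrsim\bar h(x)$ without invoking engulfing, and then to run the normalized computation at $z$. Absent this, your boundary-regime argument is not self-contained.
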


\begin{lemma}(\cite[Proposition 2.6]{LS}, \cite[Lemma 3.4 and Corollary 2.4]{LN})\label{global-gradient}
 Assume that $\Omega$ and $\phi$ satisfy \eqref{global-tang-int}--\eqref{global-sep}. Then, there are universal constants $\alpha\in (0,1)$, $C_{\alpha}, c_0, C_1$ and $M$ 
such that
\begin{myindentpar}{1cm}
(i) 
$$
[\nabla\phi]_{C^{\alpha}(\overline{\Omega})}\leq C_{\alpha};
$$
(ii) 
$$
 S_\phi(x,M)\supset \overline{\Omega} \quad\mbox{for all } x\in \overline{\Omega};
$$
(iii) for any section $S_{\phi}(x, t)$ with $x\in \overline{\Omega}$  and $t\leq c_0$, we have
\begin{equation*}
C_{1}^{-1} t^{n/2}\leq |S_{\phi}(x, t)|\leq C_{1} t^{n/2}.
\end{equation*}
\end{myindentpar}
\end{lemma}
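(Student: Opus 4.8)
The plan is to prove (i) and (ii) by a single mechanism. Write the two competing sections as sublevel sets of the normalized potentials
\[w(x):=\phi(x)-\phi(x_0)-\nabla\phi(x_0)\cdot(x-x_0)\ge 0,\qquad v(x):=\phi(x)-\phi(x_1)-\nabla\phi(x_1)\cdot(x-x_1)\ge 0,\]
so that $S_\phi(x_0,h)=\{w<h\}$ and $S_\phi(x_1,h)=\{v<h\}$, and exploit the elementary but decisive fact that $\ell:=w-v$ is \emph{affine}, with $\nabla\ell=\nabla\phi(x_1)-\nabla\phi(x_0)$ and $\ell(x_1)=w(x_1)$. Since both assertions are invariant under the affine maps that normalize sections, I first reduce to the situation in which the largest section that occurs — namely $S_\phi(x_0,st)$ in (i) and $S_\phi(x_0,t)$ in (ii) — has been put in normalized position: comparable to a ball of size $\sim 1$ if it is compactly contained in $\Omega$ (John's lemma), and comparable to a half-ball of size $\sim 1$ with flat face on $\partial\Omega$ if it is a boundary section (Savin's boundary Localization Theorem \ref{main_loc}). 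The rescaled potential $\tilde\phi$ still satisfies $\lambda\le\det D^2\tilde\phi\le\Lambda$ and inherits the quadratic separation \eqref{global-sep} on its flat face, so it has universally bounded geometry. In (i) one may also assume $st<M$, since otherwise $S_\phi(x_0,st)\supset\overline\Omega$ by the defining property of $M$ and the inclusion is trivial.

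Two universal estimates for the normalized $\tilde\phi$ drive the proof. First, $\nabla\tilde\phi$ is Hölder continuous with a universal exponent $\alpha$ and constant, up to the flat face (Lemma \ref{global-gradient}(i) together with boundary regularity), so $|\nabla\tilde\ell|=|\nabla\tilde\phi(\tilde x_1)-\nabla\tilde\phi(\tilde x_0)|\le C\,|\tilde x_1-\tilde x_0|^{\alpha}\le C$. Second, sub-sections shrink at a Hölder rate: $\operatorname{diam}S_{\tilde\phi}(\tilde x_1,h)\le Ch^{\mu}$ for small $h$, with universal $\mu\in(0,1/2)$; this is Lemma \ref{C1alpha} in the interior and, near the flat face, a consequence of Savin's Localization Theorem \ref{main_loc}, which traps $S_{\tilde\phi}(\tilde x_1,h)$ inside an ellipsoid of diameter $\lesssim h^{1/2}|\log h|$ — the engulfing property of Theorem \ref{engulfing2} being used to check that a section of fixed height $\sim 1$ around $\tilde x_1$ is again normalized, so that these estimates apply. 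Granting this, in (i) one has, on $\{\tilde v<\tilde h\}$, $\tilde w=\tilde v+\tilde\ell<\tilde h+\tilde\ell(\tilde x_1)+|\nabla\tilde\ell|\operatorname{diam}\{\tilde v<\tilde h\}$; since $\tilde x_1$ lies in the rescaled image of $S_\phi(x_0,rt)$ one has $\tilde\ell(\tilde x_1)=\tilde w(\tilde x_1)<(r/s)\tilde t$, where $\tilde t\sim 1$ is the normalized height of $S_\phi(x_0,st)$, and taking $\tilde h$ to be the rescaling of $c_0(s-r)^{p_1}t$ with $p_1:=\lceil 1/\mu\rceil$ and $c_0$ universally small forces $\sup_{\{\tilde v<\tilde h\}}\tilde w<\tilde t$, i.e. $S_{\tilde\phi}(\tilde x_1,\tilde h)\subset S_{\tilde\phi}(\tilde x_0,\tilde t)$; undoing the normalization yields (i).

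For (ii) I argue by contradiction. If some $y$ belonged to $S_\phi(x_1,c_0(s-r)^{p_1}t)\cap S_\phi(x_0,rt)$, then, after normalizing $S_\phi(x_0,t)$, one would have $\tilde\ell(\tilde y)=\tilde w(\tilde y)-\tilde v(\tilde y)<r\tilde t$ (because $\tilde w(\tilde y)<r\tilde t$ and $\tilde v\ge 0$), while $\tilde\ell(\tilde x_1)=\tilde w(\tilde x_1)\ge s\tilde t$ because $x_1\notin S_\phi(x_0,st)$; hence $|\nabla\tilde\ell|\,|\tilde x_1-\tilde y|\ge(s-r)\tilde t$. Since $\tilde x_1$ and $\tilde y$ both lie in $S_{\tilde\phi}(\tilde x_1,\tilde h)$, this gives $(s-r)\tilde t\le C\operatorname{diam}S_{\tilde\phi}(\tilde x_1,\tilde h)\le C\tilde h^{\mu}$ with $\tilde h\sim c_0(s-r)^{p_1}\tilde t$, that is $(s-r)\le Cc_0^{\mu}(s-r)^{p_1\mu}$, which is impossible for $c_0$ universally small since $p_1\mu\ge 1$ and $s-r<1$. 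The main obstacle throughout is to keep every constant — the Hölder modulus of $\nabla\phi$, the diameter decay of sub-sections, engulfing, and the volume bounds — universal \emph{up to $\partial\Omega$} and for sections of every admissible height; this is exactly where the global hypotheses \eqref{global-tang-int}--\eqref{global-sep} and Savin's boundary Localization Theorem are essential, and it is the logarithmic loss in the boundary normalization that forces a power $p_1>1$ rather than the linear modulus one obtains for Euclidean balls.
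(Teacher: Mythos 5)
Your proposal does not prove the statement it was asked to prove. The lemma in question (Lemma \ref{global-gradient}) asserts three things: (i) a global H\"older estimate $[\nabla\phi]_{C^{\alpha}(\overline{\Omega})}\leq C_{\alpha}$; (ii) the existence of a universal height $M$ with $S_\phi(x,M)\supset\overline{\Omega}$ for every $x\in\overline{\Omega}$; and (iii) two-sided volume bounds $C_1^{-1}t^{n/2}\leq|S_\phi(x,t)|\leq C_1 t^{n/2}$ for sections of height $t\leq c_0$ centered at arbitrary points of $\overline{\Omega}$, including boundary points. What you actually argue is the inclusion $S_\phi(x_1,c_0(s-r)^{p_1}t)\subset S_\phi(x_0,st)$ for $x_1\in S_\phi(x_0,rt)$ and the corresponding disjointness statement when $x_1\notin S_\phi(x_0,st)$ --- that is, you have written a proof sketch of Theorem \ref{pst} of the paper, not of Lemma \ref{global-gradient}. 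None of the three assertions of the lemma (gradient H\"older continuity up to the boundary, the engulfing height $M$, the volume estimates for boundary sections) is addressed anywhere in your argument.

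Worse, the argument is circular relative to the intended target: you explicitly invoke ``Lemma \ref{global-gradient}(i) together with boundary regularity'' to control $|\nabla\tilde\phi(\tilde x_1)-\nabla\tilde\phi(\tilde x_0)|$, i.e.\ you assume the very estimate that the lemma claims. In the paper this lemma is not re-proved at all; it is quoted from earlier work (the global $C^{1,\alpha}$ bound is \cite[Proposition 2.6]{LS}, obtained from Savin's boundary Localization Theorem together with barrier constructions, and the volume bounds for boundary sections are \cite[Lemma 3.4 and Corollary 2.4]{LN}); part (ii) then follows because $\Omega\subset B_{1/\rho}$ and the Lipschitz bound from (i) give a universal upper bound on $\phi(y)-\phi(x)-\nabla\phi(x)\cdot(y-x)$ over $x,y\in\overline{\Omega}$, so any $M$ exceeding that bound works. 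If you want to supply a self-contained proof, those are the statements you must establish; the section-inclusion mechanism you describe (which, incidentally, closely parallels the paper's own proof of Theorem \ref{pst}, including the choice $p_1\sim 1/\bar\mu$ forced by the logarithmic loss in the boundary normalization) is a downstream consequence, not a substitute.
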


\subsection{The Localization Theorem and properties of the rescaled functions} 
In this section, we assume that 
$\Omega$ and $\phi$ satisfy \eqref{global-tang-int}--\eqref{global-sep}.
We now focus on sections centered at a  point on the boundary $\p\Omega$ and describe their geometry. Assume this boundary point to be $0$ and by
 \eqref{global-tang-int}, we can also assume that
\begin{equation}\label{om_ass}
B_\rho(\rho e_n) \subset \, \Omega \, \subset \{x_n \geq 0\} \cap B_{\frac 1\rho},
\end{equation}
where $\rho>0$ is the constant given by condition \eqref{global-tang-int}. 
After subtracting a linear function, we can assume further that
\begin{equation}\label{0grad}
\phi(0)=0, \quad \nabla \phi(0)=0.
\end{equation}
By (\ref{global-sep}), the boundary data $\phi$ has quadratic growth near the hyperplane $\{x_n=0\}$. Hence, as $h \rightarrow 0$, $S_\phi(0, h)$ is equivalent to a half-ellipsoid 
centered at 0. This follows from
the Localization Theorem proved by Savin in \cite{S1,S2}. Precisely, this theorem reads as follows.
\begin{theorem}[Localization Theorem \cite{S1,S2}]\label{main_loc} Assume that $\Omega$ satisfies \eqref{om_ass} and $\phi$ satisfies 
\eqref{eq_u}, \eqref{0grad}, and
\begin{equation*}\label{commentstar}\rho |x|^2 \leq \phi(x) \leq \rho^{-1} 
|x|^2 \quad \text{on $\p \Omega \cap \{x_n \leq \rho\}.$}
\end{equation*}
Then, for each $h\leq k$ there exists an ellipsoid $E_h$ of volume $\omega_{n}h^{n/2}$ 
such that
\[
 kE_h \cap \overline \Omega \, \subset \, S_\phi(0, h) \, \subset \, k^{-1}E_h \cap \overline \Omega.
\]
Moreover, the ellipsoid $E_h$ is obtained from the ball of radius $h^{1/2}$ by a
linear transformation $A_h^{-1}$ (sliding along the $x_n=0$ plane)
$$A_hE_h= h^{1/2}B_1,\quad \det A_{h} =1,$$
$$A_h(x) = x - \tau_h x_n, \quad \tau_h = (\tau_1, \tau_2, \ldots, 
\tau_{n-1}, 0), $$
with
$ |\tau_{h}| \leq k^{-1} |\log h|$.
The constant $k$ above depends only on $\rho, \lambda, \Lambda, n$.
\end{theorem}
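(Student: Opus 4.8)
\emph{Proof proposal.}
Under the normalizations \eqref{om_ass}, \eqref{0grad} and the quadratic bound for $\phi$ on $\p\Omega$ near the origin, the theorem reduces to two assertions about the maximal boundary sections $S(h):=S_\phi(0,h)$. The \emph{shape} assertion: there is a volume-preserving ``sliding'' $A_h(x)=x-\tau_h x_n$ along $\{x_n=0\}$ such that $A_h(S(h))$ is comparable, with universal constants, to the half-ball $\{x_n\ge 0\}\cap B_{h^{1/2}}$. The \emph{stability} assertion: $|\tau_h|\le k^{-1}|\log h|$. I would first harvest the cheap consequences of the hypotheses. The two-sided bound $\rho|x|^2\le\phi(x)\le\rho^{-1}|x|^2$ on $\p\Omega$ near $0$ shows that $S(h)\cap\p\Omega$ is trapped between two $(n-1)$-dimensional discs about $0$ of radii comparable to $h^{1/2}$; and the interior ball $B_\rho(\rho e_n)\subset\Omega$ together with $\Omega\subset\{x_n\ge 0\}$ confines $S(h)\cap\p\Omega$ to within $O(h^{1/2})$ of the hyperplane $\{x_n=0\}$. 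Thus the real content of the theorem concerns how far $S(h)$ reaches \emph{into} $\Omega$.

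Granting for the moment the key geometric estimate $|S(h)|\simeq h^{n/2}$ uniformly as $h\to 0$, the rest is a dyadic iteration. At a fixed scale: by John's lemma $S(h)$ is comparable to an ellipsoid $E_h$; the disc estimate pins its $n-1$ ``horizontal'' semi-axes to $\simeq h^{1/2}$, whereupon $|E_h|\simeq h^{n/2}$ forces the last one to $\simeq h^{1/2}$ as well, and since an ellipsoid with a round horizontal trace is the sliding image of a spheroid we may write $E_h=x_h+A_h^{-1}(h^{1/2}B_1)$ with $A_h$ a volume-preserving sliding and $\mathrm{dist}(x_h,\{x_n=0\})=O(h^{1/2})$ --- which, after re-centering at $0$ and intersecting with $\overline\Omega$, is the shape asserted in the statement. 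Next, compare scales $h$ and $h/2$: from $S(h/2)\subset S(h)$ and $|S(h/2)|\simeq|S(h)|\simeq h^{n/2}$ one gets $E_{h/2}\subset c^{-1}E_h$ with comparable volume, and an ellipsoid of comparable volume sitting inside another has all semi-axes comparable to the larger one's; hence $E_{h/2}$ and $E_h$ are comparable ellipsoids with centres $O(h^{1/2})$ apart, so $A_{h/2}A_h^{-1}$ is a uniformly bounded linear map. But a composition of two slidings that fix $\{x_n=0\}$ is again such a sliding, $A_{h/2}A_h^{-1}=I-(\tau_{h/2}-\tau_h)e_n^{\mathrm T}$, and therefore $|\tau_{h/2}-\tau_h|\le C$ with $C$ universal. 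The decisive algebraic point is that slidings $x\mapsto x-\tau x_n$ with $\tau\cdot e_n=0$ commute and compose by \emph{adding} their vectors $\tau$; telescoping from a fixed scale $h_0\simeq k$ down through $m\simeq|\log h|$ dyadic steps then writes $\tau_h$ as $\tau_{h_0}$ plus a sum of $m$ vectors of length $\le C$, so $|\tau_h|\le Cm\le k^{-1}|\log h|$ once $k$ is chosen small. Non-dyadic heights are absorbed by the comparability of $S(h)$ with its nearest dyadic section.

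The crux --- and the step I expect to be the main obstacle --- is the uniform volume estimate $|S(h)|\simeq h^{n/2}$, for this is where the Monge-Amp\`ere equation and the interior ball condition must genuinely interact; a sliding preserves the coordinate $x_n$, so it can never ``fix'' a section that is a long thin sliver reaching far into $\Omega$ along a tangential direction, and it is precisely $\det D^2\phi\le\Lambda$, together with $B_\rho(\rho e_n)\subset\Omega$ and the quadratic growth of $\phi$ on $\p\Omega$, that rules such slivers out. I would establish the lower bound by contradiction and compactness: if $|S(h_k)|h_k^{-n/2}\to 0$ along a sequence, rescale $S(h_k)$ to unit size by its John map; the rescaled functions satisfy $\lambda\le\det D^2\tilde\phi\le\Lambda$ and, because $|S(h_k)|\ll h_k^{n/2}$, sections of larger and larger height stay confined to a bounded region, so a limiting convex solution would be finite on a bounded domain yet blow up at its boundary --- impossible, since its gradient image, hence its Monge-Amp\`ere mass, would be infinite, contradicting $\det D^2\le\Lambda$. (The upper bound $|S(h)|\lesssim h^{n/2}$ is comparatively soft, following from the disc estimate and $\det D^2\phi\ge\lambda$ via an estimate on the gradient image of $S(h)$.) Two features of this mechanism explain the exact form of the conclusion: since $\p\Omega$ is at each scale only \emph{comparable} to --- not equal to --- a hyperplane, an $O(1)$ mismatch is reinjected at every dyadic step instead of washing out, and this is what upgrades the per-step bound $|\tau_{h/2}-\tau_h|\le C$ into the \emph{logarithmic} (rather than bounded) growth of $\tau_h$; and it is the \emph{two-sided} quadratic separation of $\phi$ on $\p\Omega$ that is indispensable, the lower bound keeping the horizontal trace from shrinking and the upper bound keeping it from spreading.
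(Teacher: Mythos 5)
This statement is not proved in the paper at all: it is quoted verbatim from Savin \cite{S1,S2}, so the only meaningful comparison is with Savin's proof, whose overall architecture you have correctly guessed at the top level (a dyadic induction in the height, a bounded change of the sliding vector per step, and telescoping of additively composing slidings to produce the $|\log h|$ bound). However, your sketch has a genuine gap at the fixed-scale step, and it is precisely where all the PDE content of the theorem lives. You claim that the disc estimate for the boundary trace $S_\phi(0,h)\cap\p\Omega$ pins the $n-1$ ``horizontal'' semi-axes of the John ellipsoid of $S_\phi(0,h)$ to $\simeq h^{1/2}$, and that together with $|S_\phi(0,h)|\simeq h^{n/2}$ this yields the sheared--half-ball shape. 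This inference is false as soft convex geometry: the horizontal extent of the section is \emph{not} controlled by its trace on $\p\Omega$ (indeed the theorem itself only gives $S_\phi(0,h)\subset B^+_{Ch^{1/2}|\log h|}$, and the sheared ellipsoid $A_h^{-1}B_{h^{1/2}}$ has round trace of radius $h^{1/2}$ while reaching out to $\sim|\tau_h|h^{1/2}$ at positive heights). One can easily build convex sets with round trace of radius $h^{1/2}$ and volume $h^{n/2}$ (e.g.\ with a thin horizontal spike at low height) that are not comparable, with universal constants, to \emph{any} sheared half-ball centered at $0$. Ruling these out requires comparison with explicit Monge--Amp\`ere barriers built from the two-sided quadratic separation on $\p\Omega$ and $\lambda\le\det D^2\phi\le\Lambda$; in Savin's argument this barrier/compactness step is exactly the engine of the induction (the inductive hypothesis is the sheared--half-ball shape at height $h$, and the inductive step propagates it to height $ch$ with a bounded change of $\tau$), whereas in your sketch the shape at each scale is asserted by convex geometry alone and the equation is invoked only for the volume bound. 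Relatedly, your two-scale comparison assumes the shape assertion already holds at both $h$ and $h/2$, so as written the argument is circular.

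The second gap is the ``crux'' itself: the uniform volume estimate $|S_\phi(0,h)|\simeq h^{n/2}$ for \emph{boundary} sections is not an a priori input that your compactness argument delivers. If you normalize $S_\phi(0,h_k)$ by its John map $T_k$ and set $\tilde\phi=h_k^{-1}\phi\circ T_k^{-1}$, then $\det D^2\tilde\phi=h_k^{-n}(\det T_k^{-1})^2\det D^2\phi$, so the rescaled right-hand side stays in $[\lambda,\Lambda]$ (up to constants) only if $|S_\phi(0,h_k)|\simeq h_k^{n/2}$ already holds; under your contradiction hypothesis $|S_\phi(0,h_k)|\ll h_k^{n/2}$ the rescaled Monge--Amp\`ere measure degenerates to $0$, and the claimed limit ``finite inside, blowing up at the boundary'' is not justified. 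For interior sections the volume comparability follows from Aleksandrov-type barriers after John normalization precisely because the section is balanced about its center; for sections centered at a boundary point this balancing is the content of the Localization Theorem, and in \cite{S1,S2} (and in the consequences recorded in this paper, e.g.\ Lemma \ref{global-gradient}(iii)) the volume estimate is obtained together with, not prior to, the localization property. So the proposal captures the telescoping mechanism behind the $|\log h|$ bound but is missing the substantive step --- the barrier-based propagation of the section's shape across scales --- and its standalone proof of the volume estimate does not work.
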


Throughout, we denote by $B_r= B_r(0)$ the Euclidean ball centered at $0$ with radius $r$ and $B_r^{+}= B_r \cap\{x\in\RR^n: x_n\geq 0\}$.

Let $\phi$ and $\Omega$ satisfy the hypotheses of the Localization Theorem \ref{main_loc} at the 
origin. We know that for all $h \le k$,
 $S_\phi(0, h)$ satisfies 
$$k E_h \cap \bar \Omega  \subset S_\phi(0, h) \subset k^{-1} E_h,$$ 
with $A_h$ being a linear transformation 
and
$$\det A_{h} = 1,\quad E_h=A^{-1}_hB_{h^{1/2}},  \quad A_hx=x-\tau_hx_n$$
$$\tau_h \cdot e_n=0, \quad \|A_h^{-1}\|, \,\|A_h\| \le k^{-1} |\log h|.$$
This gives 
\begin{equation}\label{small-sec}
 \overline \Omega \cap B^{+}_{c_2h^{1/2}/|\log h|}\subset S_\phi(0, h) 
\subset B^{+}_{C_2 h^{1/2} |\log h|}.
\end{equation}
We denote the rescaled functions by \begin{equation*}
 \phi_h(x):=\frac{\phi(h^{1/2}A^{-1}_hx)}{h}.
\end{equation*}
The function $\phi_h$ is continuous and is defined in $\overline \Omega_h$ with
 $\Omega_h:= h^{-1/2}A_h \Omega,$
 and solves 
the Monge-Amp\`ere equation
 $$\det D^2 \phi_h=g_h(x), \quad \quad \lambda \le g_h(x) :=g(h^{1/2}A_h^{-1}x) \le \Lambda.$$
 The section at height 1 for $\phi_h$ centered at the origin satisfies
$S_{\phi_{h}}(0, 1)=h^{-1/2}A_hS_\phi (0, h),$ and by Theorem \ref{main_loc}, we 
obtain $$B_k \cap \overline \Omega_h \subset S_{\phi_{h}}(0, 1) \subset B_{k^{-1}}^+.$$

Some properties of the rescaled function $\phi_h$ was established in \cite[Lemma 4.2 and Lemma 5.4]{LS}. For later use, we record them here.
\begin{lemma}If $h\leq c$, then 
\begin{myindentpar}{1cm}
(i)
$\phi_h$ and $S_{\phi_h}(0, 1)$ satisfy the hypotheses of the Localization Theorem \ref{main_loc} at all $x_0\in\p\Omega_h\cap B(0, c)$,  for a small constant $\tilde \rho$ depending only on 
$\rho, n,\lambda,\Lambda$;\\
(ii) if $r \le c$ small, we have $$|\nabla \phi_h| \le 
C r |\log r|^2 \quad \mbox{in} \quad \overline \Omega_h \cap B_r;$$
(iii) $\p \Omega_h \cap B_{2/k}$ is a graph 
in the $e_n$ direction whose $C^{1,1}$ norm 
is bounded by $C h^{1/2}$;\\
(iv) If $\delta$ is universally small, then for any $x\in S_{\phi_h}(0,\delta)\cap\Omega_h$,  the maximal interior section $S_{\phi_h}(x, \bar h(x))$ of $\phi_h$ centered at $x$ is tangent to $\p\Omega_h$ at $z\in \p\Omega_h\cap B_{c/2}$.
\end{myindentpar}
\label{sep-lem}
\end{lemma}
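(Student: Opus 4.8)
The plan is to establish the four assertions in the order (iii)$\to$(i)$\to$(ii)$\to$(iv), since each uses the previous ones; all are statements about how the normalization $\phi\mapsto\phi_h$, $\Omega\mapsto\Omega_h=h^{-1/2}A_h\Omega$ transports the boundary geometry of $\phi$ near the fixed point $0$. The only inputs I would use are the uniform interior ball condition in \eqref{global-tang-int}, the quadratic separation \eqref{global-sep}, the H\"older estimate for $\nabla\phi$ in Lemma \ref{global-gradient}(i), the engulfing property (Theorem \ref{engulfing2}), and the Localization Theorem data: $\det A_h=1$, $\|A_h\|,\|A_h^{-1}\|\le k^{-1}|\log h|$, $A_h=I-\tau_he_n^{T}$ with $\tau_h\cdot e_n=0$, and the inclusions \eqref{small-sec}. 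I would start with (iii). By \eqref{global-tang-int} and the convexity of $\Omega$, near $0$ the set $\partial\Omega$ is the graph $\{x_n=\gamma(x')\}$ of a convex function with $\gamma(0)=0$, $\nabla\gamma(0)=0$ (by \eqref{om_ass}) and $0\le D^2\gamma\le C(\rho)$. Since $w\mapsto y=h^{-1/2}A_hw$ fixes both $\{x_n=0\}$ and the origin, this graph becomes $\{y_n=\gamma_h(y')\}$ with $\gamma_h$ defined implicitly by $y_n=h^{-1/2}\gamma\big(h^{1/2}(y'+y_n\tau_h)\big)$; the right-hand side has $y$-Hessian of size $\le Ch^{1/2}\|A_h^{-1}\|^2\ll1$ for $h$ small, so the implicit function theorem gives $\gamma_h$ on $B_{2/k}$ with $\gamma_h(0)=0$, $\nabla\gamma_h(0)=0$ and $\|D^2\gamma_h\|_{L^\infty}$ a small universal multiple of $h^{1/2}$ (up to the harmless logarithmic factor in $\|A_h^{-1}\|$), which is (iii).

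Next is (i). The pinching $\lambda\le\det D^2\phi_h\le\Lambda$ is immediate from $\det A_h=1$. For the domain hypothesis I take the reference domain in the Localization Theorem to be $S_{\phi_h}(0,1)\subset B_{k^{-1}}^{+}$, whose boundary near any $x_0\in\partial\Omega_h\cap B_c$ coincides with $\partial\Omega_h$ and, by (iii), is a nearly flat $C^{1,1}$ graph; hence it carries an interior tangent ball of a uniform radius $\tilde\rho$ and lies in $B_{1/\tilde\rho}$, once $h$ is small. The remaining point is the quadratic separation of $\phi_h$ from its tangent planes along $\partial\Omega_h\cap B_c$: for $x,x_0\in\partial\Omega_h$ near $0$, writing $w=h^{1/2}A_h^{-1}x$ and $w_0=h^{1/2}A_h^{-1}x_0\in\partial\Omega$, one has
$$\phi_h(x)-\phi_h(x_0)-\nabla\phi_h(x_0)\cdot(x-x_0)=\tfrac1h\Big[\phi(w)-\phi(w_0)-\nabla\phi(w_0)\cdot(w-w_0)\Big],$$
which by \eqref{global-sep} lies between $\rho|A_h^{-1}(x-x_0)|^2$ and $\rho^{-1}|A_h^{-1}(x-x_0)|^2$. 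Here (iii) is decisive: because $\partial\Omega_h$ is nearly flat near $0$, the vector $x-x_0$ is nearly horizontal, and $A_h^{-1}=I+\tau_he_n^{T}$ acts trivially on horizontal vectors, so $|A_h^{-1}(x-x_0)|=(1+o_h(1))|x-x_0|$; thus $\phi_h$ separates quadratically along $\partial\Omega_h\cap B_c$ with, say, constant $\tilde\rho=\rho/2$ for $h$ small, which is (i).

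For (ii): $\phi_h$ is convex with $\phi_h(0)=\nabla\phi_h(0)=0$, hence $\phi_h\ge0$; since $\phi_h$ satisfies the Localization Theorem at $0$ with constant $k$, \eqref{small-sec} gives $S_{\phi_h}(0,t)\subset B^{+}_{Ct^{1/2}|\log t|}$, and choosing $t\asymp r^2|\log r|^2$ yields $\phi_h\le Cr^2|\log r|^2$ on $\overline{\Omega_h}\cap B_r$. The gradient bound is then a convexity argument: from $x\in B_r$ I move in the direction $\nabla\phi_h(x)$; the horizontal component of the motion may be carried a distance $\asymp r$ inside $\overline{\Omega_h}\cap B_{2r}$ by (iii), bounding that part of the gradient by $Cr|\log r|^2$, and the vertical part is handled the same way when $\partial_n\phi_h(x)\ge0$, while if $\partial_n\phi_h(x)<0$ one instead uses the supporting-plane inequality at $x$ against the point of $\partial\Omega_h$ lying directly below $x$ --- where again $\phi_h\le Cr^2|\log r|^2$ --- to conclude $|\partial_n\phi_h(x)|\le Cr|\log r|^2$. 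I expect this to be the main obstacle of the lemma: controlling $\nabla\phi_h$ at points of $B_r$ that lie extremely close to $\partial\Omega_h$, where the motion in the gradient direction is obstructed; it is precisely the thin ``pancake'' shape of the sections furnished by the Localization Theorem that makes the argument work.

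Finally (iv): given $x\in S_{\phi_h}(0,\delta)\cap\Omega_h$, the engulfing property (with centers $0$ and $x$) gives $S_{\phi_h}(0,\delta)\subset S_{\phi_h}(x,\theta_{\ast}\delta)$, hence $0\in S_{\phi_h}(x,\theta_{\ast}\delta)$; since $0\in\partial\Omega_h$ while $S_{\phi_h}(x,h)\subset\Omega_h$ for every $h<\bar h(x)$, this forces $\bar h(x)<\theta_{\ast}\delta$. A second application of engulfing (now $0\in S_{\phi_h}(x,\theta_{\ast}\delta)$ yields $S_{\phi_h}(x,\theta_{\ast}\delta)\subset S_{\phi_h}(0,\theta_{\ast}^{2}\delta)$), followed by \eqref{small-sec} at $0$, gives
$$\overline{S_{\phi_h}(x,\bar h(x))}\subset\overline{S_{\phi_h}(0,\theta_{\ast}^{2}\delta)}\subset\overline{B^{+}_{C(\theta_{\ast}^{2}\delta)^{1/2}|\log\delta|}}\subset B_{c/2}$$
for $\delta$ universally small. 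Since $S_{\phi_h}(x,\bar h(x))$ is the \emph{maximal} interior section at $x$, its closure must meet $\partial\Omega_h$ --- otherwise it would be compactly contained in $\Omega_h$ and a slightly larger section would still lie inside --- say at a point $z$, which then belongs to $\partial\Omega_h\cap B_{c/2}$; tangency at $z$ is automatic because $S_{\phi_h}(x,\bar h(x))\subset\Omega_h$ and both are convex sets sharing the boundary point $z$. This gives (iv) and completes the lemma.
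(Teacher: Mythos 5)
This lemma is not proved in the paper: it is quoted from Lemma~4.2 and Lemma~5.4 of \cite{LS}, with only the statement recorded here. So there is no in-paper argument to compare against, and your proposal must be judged on its own. Parts~(iii), (i), and~(iv) follow a natural and essentially sound path. Two refinements are needed, though. In~(iii), the crude Hessian bound $Ch^{1/2}\|A_h^{-1}\|^2$ gives $Ch^{1/2}|\log h|^2$, not the claimed $Ch^{1/2}$; to remove the logarithm one must use that $A_h^{-1}=I+\tau_he_n^{T}$ is the identity on horizontal vectors and that the graph is parametrized over the nearly horizontal $y'$-variables, so only the restriction of $A_h^{-1}$ to a nearly horizontal subspace enters, and this has norm $1+o_h(1)$. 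In~(iv), the engulfing property for $\phi_h$ does not come directly from Theorem~\ref{engulfing2} (which is stated for $\phi$ on $\Omega$); it follows because engulfing is affine invariant and $S_{\phi_h}(x,t)=h^{-1/2}A_h\,S_\phi(h^{1/2}A_h^{-1}x,\,ht)$ --- worth a sentence.

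The genuine gap is in~(ii), at exactly the spot you flag as the main obstacle. For $x$ with $x_n-\gamma_h(x')$ small and $\partial_n\phi_h(x)<0$, running the supporting-plane inequality at $x$ against the boundary point $z=(x',\gamma_h(x'))$ below $x$ gives
$$|\partial_n\phi_h(x)|\ \le\ \frac{\phi_h(z)-\phi_h(x)}{x_n-\gamma_h(x')}\ \le\ \frac{Cr^2|\log r|^2}{\,x_n-\gamma_h(x')\,},$$
which is \emph{not} $\le Cr|\log r|^2$ once $x_n-\gamma_h(x')\ll r/|\log r|^2$, i.e.\ precisely for the boundary-hugging points that are the difficulty. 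The alternative convexity inequality $\ell_x(0)\le\phi_h(0)=0$ has the same defect: it yields $-\partial_n\phi_h(x)\,x_n\le|\nabla'\phi_h(x)|\,|x'|$, again degenerating as $x_n\to0$. A correct proof of the uniform gradient bound up to $\partial\Omega_h$ needs more than monotonicity of a convex function in a box; it requires additional structure, such as the description of $\nabla\phi_h$ along maximal interior sections furnished by Proposition~\ref{tan_sec} (once~(i) is known at nearby boundary points), or a barrier argument as in \cite{LS}. As written, (ii) is not established for points in the thin collar next to $\partial\Omega_h$.
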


\section{Geometric properties of boundary sections}
\label{quasi_sec}
In this section, we prove Theorem \ref{pst}
and establish a chain property for sections of solutions to the Monge-Amp\`ere equations in Lemma \ref{chain_lem}.

For the proof of Theorem \ref{pst}, we need two additional results: Proposition \ref{tan_sec} and Lemma \ref{size_sec}. Proposition \ref{tan_sec} is concerned with the shape of maximal interior sections. Lemma \ref{size_sec} estimates the size of a section of $\phi$ in terms of its height. 
\begin{propo}\label{tan_sec}
Let $\phi$ and $\Omega$ satisfy the hypotheses of the Localization Theorem \ref{main_loc} at the 
origin. Assume that for some $y \in \Omega$ the maximal interior section $S_{\phi}(y, \bar h(y)) \subset \Omega$
is tangent to $\p \Omega$ at $0$, that is, $\p S_{\phi}(y, \bar h(y))\cap \p\Omega =\{0\}$. Then, if  $h:=\bar h(y) \le c$ with $c$ universal, 
\begin{myindentpar}{1cm}
(i) there exists a small 
 positive constant $k_0<k$ depending 
only on $\rho $, $\lambda$, $\Lambda$  and $n$ such that
$$ \nabla \phi(y)=a e_n 
\quad \mbox{for some} \quad   a \in [k_0 h^{1/2}, k_0^{-1} h^{1/2}],$$
$$k_0 E_h \subset S_{\phi}(y, h) -y\subset k_0^{-1} E_h, \quad \quad k_0 h^{1/2} \le dist(y,\p \Omega) \le k_0^{-1} h^{1/2}, \quad $$
with $E_h$ and $k$ the ellipsoid and constant defined in the Localization Theorem \ref{main_loc};\\
(ii) if $h/2<t\leq c$ then $S_\phi(y, 2t)\subset S_\phi(0, (2k_0^{-4} + 4) t)$;\\
(iii) If $t\leq h/2$ then 
$S(y, t)\subset B(y, C_4 t^{\bar\mu})$
for universal constants
$\bar\mu(n,\lambda,\Lambda)\in (0, 1/8)$ and $C_4$.
\end{myindentpar}
\end{propo}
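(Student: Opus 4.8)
The plan is to exploit the Localization Theorem \ref{main_loc} at the origin together with the tangency of the maximal interior section $S_\phi(y,\bar h(y))$ to $\partial\Omega$ at $0$. For part (i), the key observation is that since $S_\phi(y,h)\subset\Omega$ is tangent to $\partial\Omega$ at $0$ and $\partial\Omega$ has an interior tangent ball at $0$ (after the normalization \eqref{om_ass}, $\partial\Omega$ is, near $0$, essentially the plane $\{x_n=0\}$), the supporting hyperplane of $\phi$ at $y$ that defines this section must agree on $\{x_n=0\}$ with $\phi$'s boundary values only at $0$; this forces $\nabla\phi(y)$ to be parallel to $e_n$, and the quadratic growth of $\phi$ on $\partial\Omega$ from \eqref{global-sep} pins down $a=\nabla\phi(y)\cdot e_n$ between universal multiples of $h^{1/2}$. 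To get the shape statement $k_0 E_h\subset S_\phi(y,h)-y\subset k_0^{-1}E_h$, I would compare $S_\phi(y,h)$ with the section $S_\phi(0,h')$ of the Localization Theorem for an appropriate $h'\sim h$: since $0\in\partial S_\phi(y,h)$, the engulfing-type estimates of Theorem \ref{main_loc} give two-sided ellipsoidal bounds, and $y$ is comparable to the "center" of $E_h$, which lies at height $\sim h^{1/2}$ from $\{x_n=0\}$, yielding the distance estimate $k_0 h^{1/2}\le\mathrm{dist}(y,\partial\Omega)\le k_0^{-1}h^{1/2}$.

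For part (ii), with $h/2<t\le c$, I would use that $S_\phi(y,2t)$ is comparable to $S_\phi(y,h)$ up to a dilation factor controlled by $t/h\in(1/2,\ldots]$ and apply part (i)'s ellipsoidal description: any point $z\in S_\phi(y,2t)$ satisfies $|z-y|\lesssim (t)^{1/2}|\log|$-type bounds via the shape of $E_h$; then, writing the defining inequality for $S_\phi(0,\cdot)$ and bounding $\phi(z)-\phi(0)-\nabla\phi(0)\cdot z=\phi(z)$ using convexity, the supporting plane at $y$, the bound $a\le k_0^{-1}h^{1/2}\le k_0^{-1}(2t)^{1/2}$ on $|\nabla\phi(y)|$, and the containment from (i), one checks $\phi(z)<(2k_0^{-4}+4)t$. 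The constant is just the bookkeeping of these estimates. For part (iii), when $t\le h/2$ the section $S_\phi(y,t)$ is compactly contained in $S_\phi(y,h)\subset\subset\Omega$, so after the affine renormalization that sends $S_\phi(y,h)$ to a set pinched between $B_1$ and $B_n$ (using (i) to control the norms of this map by universal constants, since $A_h$ from the Localization Theorem has $\|A_h\|,\|A_h^{-1}\|\le k^{-1}|\log h|$ and $h$ is universally small so these are universally bounded), Lemma \ref{C1alpha} applies and gives $S_\phi(y,t)\subset B(y,C_4 t^{\bar\mu})$; I would absorb the logarithmic factors from the renormalization into a slightly smaller exponent $\bar\mu<1/8$.

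The main obstacle I expect is part (i), specifically showing $\nabla\phi(y)\parallel e_n$ rigorously and extracting the two-sided bound on $a$: this requires carefully combining the tangency condition (which is a statement about where $\partial S_\phi(y,h)$ touches $\partial\Omega$) with the quadratic separation \eqref{global-sep} on $\partial\Omega$ and the geometry \eqref{om_ass}, and then matching $S_\phi(y,h)$ to the Localization ellipsoid $E_h$ — the subtlety is that $E_h$ is centered at $0$ while $S_\phi(y,h)$ is centered at $y$, so one must verify that $y$ sits at the "correct" height inside $k^{-1}E_h$, which uses that the tangency forces $y$ away from $\{x_n=0\}$ by exactly order $h^{1/2}$. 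Once (i) is in hand, parts (ii) and (iii) are comparatively routine applications of convexity estimates and $C^{1,\alpha}$ regularity, respectively.
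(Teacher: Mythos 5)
Your plan for part (iii) matches the paper's proof: rescale $\phi$ by the affine map $A_h$ furnished by part (i), observe that the rescaled section $S_{\tilde\phi}(0,1)=h^{-1/2}A_h(S_\phi(y,h)-y)$ is pinched between universal balls, apply Lemma \ref{C1alpha} to $S_{\tilde\phi}(0,t/h)$, and then undo the rescaling while absorbing the logarithmic distortion of $A_h^{-1}$ into a smaller exponent $\bar\mu$. One sentence in your writeup is, however, wrong as stated: you say that because $\|A_h\|,\|A_h^{-1}\|\le k^{-1}|\log h|$ and $h$ is small, "these are universally bounded." They are not: $|\log h|\to\infty$ as $h\to 0$. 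What makes the argument work is exactly the trade-off you mention a moment later, i.e.\ $h^{1/2}\|A_h^{-1}\|\le C h^{1/2}|\log h|\le h^{1/4}$ for $h$ small, combined with the choice $\bar\mu<1/4$ (the paper uses $\bar\mu=\min\{\mu,1/10\}$) so that $h^{1/4}(t/h)^{\bar\mu}\le t^{\bar\mu}$. The conclusion you reach is right; the parenthetical justification is not.

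For parts (i) and (ii) you take a genuinely different route from the paper. The paper does not reprove them: it cites Savin \cite{S3} (see also \cite[Proposition 3.2]{LS}) for (i) and \cite[Proposition 2.3, eq.\ (4.11)]{LN} for (ii). Your outline for (i) — that tangency of $\partial S_\phi(y,\bar h(y))$ to $\partial\Omega$ at $0$ forces the tangential component of $\nabla\phi(y)$ to vanish, and that the quadratic separation \eqref{global-sep} then pins $a$ and the shape of the section — reproduces the germ of Savin's argument, but be aware this is an entire proposition in those references, not a short computation; the matching of $S_\phi(y,h)-y$ to the sheared ellipsoid $E_h$ with a universal $k_0$ is where the real work lies. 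Your sketch of (ii) is thinner: you need an upper bound on $z_n$ for $z\in S_\phi(y,2t)$ together with $\phi(y)=ay_n-h$ (from $0\in\partial S_\phi(y,h)$ and $\nabla\phi(0)=0$), and making the bookkeeping land precisely on the constant $2k_0^{-4}+4$ requires choosing $k_0$ small relative to the engulfing constant $\theta_\ast$ and the Localization constant $k$; calling this "comparatively routine" is optimistic. In short: (iii) is correct and essentially the paper's proof modulo one misstatement; (i)--(ii) are plausible reconstructions of the cited results but are sketches rather than proofs, whereas the paper deliberately offloads them to prior work.
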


\begin{lemma}\label{size_sec}  Assume that $\Omega$ and $\phi$ satisfy \eqref{global-tang-int}--\eqref{global-sep}. Let $M$ be as in Lemma \ref{global-gradient} and 
let $\bar\mu$ be as in Proposition \ref{tan_sec}. Then, there is a universal constant $\bar C>0$ such that
 for all $t\leq M$ and $y\in\overline{\Omega}$,
 $$S_\phi(y, t)\subset B(y, \bar C t^{\bar\mu}).$$
\end{lemma}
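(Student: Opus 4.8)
The plan is to prove Lemma \ref{size_sec} by reducing an arbitrary boundary-or-interior section $S_\phi(y,t)$ to one of the three geometric regimes already analyzed: a genuinely compactly supported section controlled by interior $C^{1,\alpha}$ estimates (Lemma \ref{C1alpha}), a section comparable to a boundary section controlled by the Localization Theorem \ref{main_loc} via \eqref{small-sec}, or a maximal interior section tangent to the boundary controlled by Proposition \ref{tan_sec}. The key point is that every section embeds in a dilate of a maximal section, and a maximal section is either interior and controlled by Lemma \ref{C1alpha} after affine normalization, or tangent to $\p\Omega$ and controlled by Proposition \ref{tan_sec}(iii) together with part (i) for its shape.

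First I would dispose of the trivial range: if $t$ is comparable to $M$ (say $t\geq c$ for a universal $c$), then $S_\phi(y,t)\subset\overline\Omega\subset B(y,\mathrm{diam}\,\overline\Omega)\leq B(y,2/\rho)$, and since $t^{\bar\mu}\geq c^{\bar\mu}$ this is absorbed into the constant $\bar C$ for $\bar C$ large. So I may assume $t\leq c$ small. Next, fix $y\in\overline\Omega$ and consider the maximal interior section $S_\phi(\tilde y,\bar h(\tilde y))$ where $\tilde y$ is chosen appropriately — for $y\in\Omega$ well inside, take $\tilde y=y$; in general one uses that $S_\phi(y,t)$ is contained in a maximal interior section with center at bounded distance and comparable height, via the engulfing property (Theorem \ref{engulfing2}) and the fact that small sections at nearby centers are nested up to a universal dilation of the height. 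The cleanest route: there is a universal $\theta$ so that $S_\phi(y,t)\subset S_\phi(z,\theta t)$ for some $z$ with $S_\phi(z,\theta t)$ a maximal interior section tangent to $\p\Omega$ (when $y$ is near the boundary) or with $S_\phi(z,\theta t)\subset\subset\Omega$ (when $y$ is deep inside).

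In the interior case, after the affine normalization $T$ that sends $S_\phi(z,\bar h)$ to a set between $B_1$ and $B_n$, the rescaled potential $\phi\circ T^{-1}$ has determinant pinched between $\lambda$ and $\Lambda$, so Lemma \ref{C1alpha} gives $S_\phi(z,\theta t)\subset B(z,C_1(\theta t)^{\mu})$ in the rescaled picture, and transferring back — using that $\|T\|,\|T^{-1}\|$ are bounded in terms of $\bar h(z)$, which is itself bounded below because we are in the regime $\bar h(z)\geq$ universal once $z$ is deep — yields the claim with $\bar\mu\leq\mu$. In the boundary-tangent case, Proposition \ref{tan_sec}(iii) directly gives $S_\phi(z,\theta t)\subset B(z,C_4(\theta t)^{\bar\mu})$ once $\theta t\leq\bar h(z)/2$; when instead $\theta t>\bar h(z)/2$ one uses part (i) to see $S_\phi(z,\theta t)$ is comparable to the half-ellipsoid $E_{\theta t}$, whose diameter is at most $k_0^{-1}\|A_{\theta t}^{-1}\|(\theta t)^{1/2}\lesssim (\theta t)^{1/2}|\log(\theta t)|\lesssim (\theta t)^{\bar\mu}$ since $\bar\mu<1/8<1/2$. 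Combining and enlarging $\bar C$ to absorb the powers of $\theta$ finishes the proof.

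The main obstacle I expect is the bookkeeping in the reduction step — showing cleanly that every $S_\phi(y,t)$ with $y\in\overline\Omega$ sits inside a dilated \emph{maximal} section whose center lies in $\overline\Omega$, whose dilated height is still universally small, and which falls into exactly one of the two tractable geometric regimes, with all the implied constants universal. This requires carefully invoking the engulfing property together with Lemma \ref{sep-lem}(iv) (tangency of maximal sections of the rescaled function to the boundary) and the volume growth Lemma \ref{global-gradient}(iii), and checking that the "deep interior versus near boundary" dichotomy is governed by comparing $\bar h(z)$ to a universal threshold; once that is set up, the two cases are immediate from Lemma \ref{C1alpha} and Proposition \ref{tan_sec}.
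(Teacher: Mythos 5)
Your plan reaches for the right tools (Lemma \ref{C1alpha}, the Localization Theorem via \eqref{small-sec}, Proposition \ref{tan_sec}, and the engulfing property), and the three geometric regimes you identify are essentially those the paper uses. However, the reduction step you propose — embedding every $S_\phi(y,t)$ into a dilated \emph{maximal} interior section $S_\phi(z,\theta t)$ — is both unnecessary and, as stated, incorrect, and it leads you into a misapplication of Proposition \ref{tan_sec}(i).

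Concretely: you assert that $S_\phi(y,t)\subset S_\phi(z,\theta t)$ with $S_\phi(z,\theta t)$ itself a maximal interior section tangent to $\p\Omega$ (so $\theta t=\bar h(z)$). For $y\in\p\Omega$, or for $y$ so close to $\p\Omega$ that $S_\phi(y,t)$ meets it, this cannot hold: $S_\phi(y,t)$ contains a relatively open piece of $\p\Omega$, whereas a maximal interior section touches $\p\Omega$ at a single point, so it cannot contain $S_\phi(y,t)$. You then treat $\theta t$ and $\bar h(z)$ as independent in your dichotomy ``$\theta t\leq\bar h(z)/2$ versus $\theta t>\bar h(z)/2$'', which contradicts the maximality hypothesis. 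Finally, when $\theta t>\bar h(z)/2$ you cite Proposition \ref{tan_sec}(i) to control $S_\phi(z,\theta t)$ by an ellipsoid $E_{\theta t}$ with $\|A_{\theta t}^{-1}\|\lesssim|\log(\theta t)|$. But part (i) gives the ellipsoid comparison only for the maximal height $h=\bar h(z)$, and the ellipsoid $E_h$ with matrix $A_h$ is the one attached by Theorem \ref{main_loc} to the \emph{tangent boundary point}, not to $z$. For $\theta t>\bar h(z)/2$ the correct tool is Proposition \ref{tan_sec}(ii), which absorbs $S_\phi(z,2\theta t)$ into a boundary section at the tangent point, after which \eqref{small-sec} supplies the $|\log|$-bound.

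The paper avoids the forced reduction by splitting on the location of $y$ directly. If $y\in\p\Omega$, \eqref{small-sec} gives $S_\phi(y,t)\subset B(y,C_3 t^{1/4})$ for $t\leq k$, hence $\subset B(y,C_3 t^{\bar\mu})$ for all $t\leq M$ after enlarging $C_3$. If $dist(y,\p\Omega)\geq c$, normalize the maximal section of $y$ (whose height $\bar h(y)$ is now bounded below) and apply Lemma \ref{C1alpha} together with the strict convexity of $\phi$. If $dist(y,\p\Omega)<c$, dichotomize on whether $S_\phi(y,2t)\subset\Omega$: if not, pick $z\in\p\Omega\cap S_\phi(y,2t)$, use the engulfing property to get $S_\phi(y,2t)\subset S_\phi(z,2\theta_\ast t)$ and reduce to the boundary case, then recenter the ball at $y$ using $|y-z|\leq C_3 t^{\bar\mu}$; if $S_\phi(y,2t)\subset\Omega$, then $t\leq\bar h(y)/2$, so Proposition \ref{tan_sec}(iii) applies \emph{directly at $y$} with no auxiliary center $z$ needed. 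The dichotomy ``$S_\phi(y,2t)\subset\Omega$ or not'' is the clean replacement for the maximal-section reduction you were attempting, and it keeps each branch in the exact regime where the corresponding estimate is stated.
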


\begin{proof}[Proof of Proposition \ref{tan_sec}]  (i) is a consequence of Theorem \ref{main_loc} and was proved in \cite{S3} (see also \cite[Proposition 3.2]{LS}).
 (ii) comes from the proof of \cite[Proposition 2.3]{LN} (see equation (4.11) there).

We now prove (iii). By (i),
$S_\phi(y,h)$ is equivalent to an ellipsoid $E_h$, i.e.,
$$k_0 E_h \subset S_\phi(y,  h)-y \subset k_0^{-1}E_h,$$
where
\begin{equation}E_h:= h^{1/2}A_{h}^{-1}B_1, \quad \mbox{with} \quad \|A_{h}\|, \|A_{h}^{-1} \| \le C |\log h|; \det A_{h}=1.
\label{eh}
\end{equation}
The rescaling $\tilde \phi$ of $\phi$ 
$$\tilde \phi(\tilde x):=\frac {1}{ h} \left[\phi(y+ h^{1/2}A_{h}^{-1}\tilde x)-\phi (y) -\nabla \phi(y) \cdot (h^{1/2}A_{h}^{-1}\tilde x)\right]$$
satisfies
$$\det D^2\tilde \phi(\tilde x)=\tilde g(\tilde x):=g(y+ h^{1/2}A_{h}^{-1}\tilde x)\in [\lambda,\Lambda],  $$
and
\begin{equation}
\label{normalsect}
B_{k_0} \subset  S_{\tilde\phi}(0, 1) \subset B_{k_0^{-1}}, \quad \quad  S_{\tilde\phi}(0, 1)= h^{-1/2} A_{ h}(S_\phi(y, h)- y),
\end{equation}
where we recall that $ S_{\tilde\phi}(0, 1)$ represents the section of $\tilde \phi$ at the origin with height 1.

With (\ref{normalsect}), we apply Lemma \ref{C1alpha} to obtain 
$\mu(n,\lambda,\Lambda)\in (0, 1)$ and $C_4(n,\lambda,\Lambda,\rho)$ such that
$$h^{-1/2} A_{ h}(S_\phi(y, t)- y)=S_{\tilde\phi} (0, \frac{t}{ h})\subset B(0, C_4 (\frac{t}{h})^{\mu}).$$
Using (\ref{eh}), $t/h\leq \frac{1}{2}$ and $ h \leq c$, we can take $\bar\mu= \min\{\mu,\frac{1}{10}\}$ and obtain
$$S_\phi(y, t)- y\subset h^{1/2}A_h^{-1} (B(0, C_4 (\frac{t}{h})^{\mu})) \subset h^{1/4} B(0, C_4 (\frac{t}{h})^{\bar \mu}) \subset B(0, C_4 t^{\bar\mu}).$$
Hence (iii) is proved.
\end{proof}
 \begin{proof} [Proof of Lemma \ref{size_sec}] In this proof, we drop the dependence on $\phi$ of sections when no confusion arises.
We first prove the lemma for the case where $y$ is a boundary point which can be assumed to be 
 $0\in\p\Omega$. By rotating coordinates, and subtracting a linear function from $\phi$, we can assume that
 $\phi$ and $\Omega$ satisfy the hypotheses of the Localization Theorem \ref{main_loc} at the 
origin. Thus, if $t\leq k(\leq 1)$, then from (\ref{small-sec}), we have
 $$S(0, t)\subset B(0, C_2 t^{1/2}|\text{log } t|)\subset B(0, C_3 t^{1/4})$$
 for some $C_3$ universal, depending only on $\rho,n,\lambda$ and $\Lambda$.
 By increasing $C_3$ if necessary, we find that
 \begin{equation}S(0, t)\subset B(0, C_3 t^{\bar\mu})~\text{for all } t\leq M.
  \label{ybdr}
 \end{equation}

Next, we prove the lemma for $y\in\Omega$ away from the boundary, that is 
$r:=dist (y,\partial\Omega) \ge c,$ for $c$ universal. In this case, we can use Lemma \ref{C1alpha} and the strict convexity of $\phi$
 which follows from the boundary $C^{1,\alpha}$ regularity of $\phi$ for all $\alpha\in (0,1)$ as observed in \cite[Lemma 4.1]{LS}
 and Caffarelli's Localization Theorem \cite{C1}. We then find a $\mu\in (0, 1)$ depending only on
$n,\lambda,\Lambda$ and $ C_0$ depending on $\rho,n,\lambda,\Lambda$ such that
$S_\phi(y, t)\subset B(y, C_0 t^{\mu})~\text{for all } t\leq M.$
By increasing $C_0$ if necessary, we find that
\begin{equation}S_\phi(y, t)\subset B(y, C_0 t^{\bar\mu})~\text{for all } t\leq M.
 \label{yfbdr}
\end{equation}
Finally, we prove the lemma for $y\in\Omega$ near the boundary $\p\Omega$.
Let $y\in \Omega $ with $r:=dist (y,\partial\Omega) \le c,$ for $c$ universal. Consider the section $S(y, t)$ with $t\leq M$. Then, either there exists $z\in\partial\Omega$ such that $z\in S(y, 2t)$
or $S(y, 2t)\subset \Omega$.
In the first case, by Theorem \ref{engulfing2}, we have
 $ S(y, 2t)\subset S(z, 2\theta_{\ast}t).$ Thus, 
as in (\ref{ybdr}), we find  
 $$S(y, t)\subset S(z,2\theta_{\ast} t)\subset B(z, C_3 t^{\bar\mu}).$$
 It follows that
 $|y-z|\leq C_3 t^{\bar\mu}$
 and therefore
 \begin{equation}S(y, t)\subset B(y, 2C_3 t^{\bar\mu})~\text{for all } t\leq M.
  \label{ynbdr2}
 \end{equation}
 In the remaining case $S(y, 2t)\subset \Omega$, we obtain from Proposition \ref{tan_sec} (iii),
\begin{equation}S(y, t)\subset B(y, C_4 t^{\bar\mu}).
 \label{ynbdr1}
\end{equation}
The lemma now follows from (\ref{ybdr})-(\ref{ynbdr1}) by choosing $\bar C=C_0 + 2C_3 + C_4.$
 \end{proof}

\begin{proof}[Proof of Theorem \ref{pst}] We prove that the conclusions of the theorem hold for $p_1=\bar\mu^{-1}$ where $0<\bar\mu (n,\lambda,\Lambda)<1/8$ is the 
universal constant in Lemma \ref{size_sec}. \\
 (i) Let $0< r<s\leq 3, s-r\leq 1$ and $0<t\leq M$ where $M$ is as in Lemma \ref{global-gradient}. Let $c, \delta$ be as in Lemma \ref{sep-lem} and let $\theta_{\ast}$ be as in Theorem \ref{engulfing2}. We first consider the case $t\leq \frac{c\delta}{4\theta_\ast}$. If 
 $S_\phi(x_0, 4t)\subset\Omega$ then the theorem follows from the interior result established in \cite[Theorem 3.3.10]{G}. Suppose now $S_\phi(x_0, 4t)\cap\p\Omega\neq\emptyset.$ Without loss of generality, we can assume that $0\in S_\phi(x_0, 4t)\cap\p\Omega$ and that $\phi$ and $\Omega$ satisfy the hypotheses
of the Localization Theorem \ref{main_loc} at $0$. By Theorem \ref{engulfing2}, we have
\begin{equation}S_\phi(x_0, 4t)\subset S_\phi(0, 4\theta_\ast t).
\label{eng0}
\end{equation}
 We use the Localization Theorem \ref{main_loc} at the origin with height
$h= 4\theta_\ast \delta^{-1}t\leq c$ and consider the rescaled function
$$\phi_h(x)= \frac{\phi(h^{1/2}A_h^{-1}x)}{h}~\text{where}~x\in\Omega_h= h^{-1/2}A_h \Omega.$$
Denote
$x_{i, h}= h^{-1/2}A_h x_i$ for $i=0, 1$. Let $\bar t= \frac{\delta}{4\theta_\ast}$.
Then, since $x_1\in S_\phi(x_0, rt)$, we have
$x_{1, h}\in S_{\phi_h}(x_{0, h}, r\bar t)$ and 
$$h^{-1/2}A_hS_{\phi}(x_1, c_0 (s-r)^{p_1} t)=S_{\phi_h}(x_{1, h}, c_0 (s-r)^{p_1}\bar t),~ h^{-1/2}A_hS_{\phi}(x_0, st)= S_{\phi_h}(x_{0, h}, s\bar t).$$
We will show that for some universally small $c_0>0$ and $p_1\geq 1$
\begin{equation}S_{\phi_h}(x_{1, h}, c_0 (s-r)^{p_1}\bar t)\subset S_{\phi_h}(x_{0, h}, s\bar t).
\label{res_inc}
\end{equation}
Suppose that $y\in S_{\phi_h}(x_{1, h}, c_0 (s- r)^{p_1}\bar t)$ and $x_{1,h}\in S_{\phi_h}(x_{0, h}, r \bar t)$. Then
\begin{eqnarray}
  \phi_h(y)&<&\phi_h(x_{1, h})+ \nabla\phi_h(x_{1, h})(y-x_{1, h}) + c_0 (s-r)^{p_1}\bar t\nonumber\\
  &<& \phi_h(x_{0, h}) +\nabla\phi_h(x_{0, h})(x_{1, h}- x_{0, h}) + r\bar t + \nabla\phi_h(x_{1, h})(y-x_{1, h}) + c_0 (s-r)^{p_1}\bar t \nonumber\\
  &=& \phi_h(x_{0, h}) + \nabla\phi_h(x_{0, h})(y-x_{0, h}) + [\nabla\phi_h(x_{1, h})-\nabla\phi_h(x_{0, h})](y-x_{1, h}) + c_0 (s-r)^{p_1}\bar t + r\bar t.
  \label{phihy}
 \end{eqnarray}
We note from (\ref{eng0}) that $x_{1, h}\in S_{\phi_h}(x_{0, h}, r\bar t)\subset S_{\phi_h}(0,\delta)$.
 By Lemma \ref{sep-lem} (ii), 
\begin{equation}|\nabla\phi_h (x_{0, h})|\leq  C ~\text{and}~ |\nabla\phi_h (x_{1, h})|\leq  C.
\label{xh_size}
\end{equation}
On the other hand, from  $y\in S_{\phi_h}(x_{1, h}, c_0 (s- r)^{p_1}\bar t)$, we can estimate for some universal constant $C$
\begin{equation}|y-x_{1, h}|\leq  C (c_0 (s- r)^{p_1} \bar t)^{\bar\mu}
\label{yxh}
\end{equation}
where we call that $\bar\mu (n,\lambda,\Lambda)\in (0, 1/8)$ is also the constant in Proposition \ref{tan_sec}. 

Indeed, if $x_{1, h}\in\p\Omega_h$, then from Lemma \ref{sep-lem}(i), $\phi_h$ and $S_{\phi_h}(0, 1)$ satisfy the hypotheses of the Localization Theorem \ref{main_loc} at
$x_{1, h}$ and hence (\ref{yxh}) follows from the estimate (\ref{small-sec}).

Consider now the case $x_{1, h}\in \Omega_h$.
By Lemma \ref{sep-lem} (iv), $ S_{\phi_h}(x_{1, h}, \bar h(x_{1, h}))$, the maximal interior section of $\phi_h$ centered at $x_{1, h}$ is 
tangent to $\p\Omega_h$ at $z\in \p\Omega_h\cap B_{c/2}$. Thus, by Lemma \ref{sep-lem} (i), the Localization Theorem \ref{main_loc} is applied at $z$
and we can apply Proposition \ref{tan_sec} to the tangent point $z$.

If $c_0(s- r)^{p_1}\bar t > \frac{1}{2}\bar h(x_{1, h})$ then from Proposition \ref{tan_sec} (ii), we have
$$S_{\phi_h}(x_{1, h}, 2c_0 (s- r)^{p_1}\bar t)\subset S_{\phi_h}(z, \bar c c_0 (s- r)^{p_1}\bar t), ~\bar c=2k_0^{-4} + 4.$$
It follows from (\ref{small-sec}) that
\begin{equation}|y-x_{1, h}|\leq diam (S_{\phi_h}(z, \bar c c_0 (s- r)^{p_1}\bar t))\leq C(\bar cc_0 (s- r)^{p_1}\bar t)^{1/3}.
 \label{yxh1}
\end{equation}

Now, it remains to consider the case $c_0(s- r)^{p_1}\bar t \leq \frac{1}{2}\bar h(x_{1, h})$. Then, by Proposition \ref{tan_sec} (iii),
\begin{equation}
 |y-x_{1, h}| \leq C_4 (c_0(s- r)^{p_1}\bar t)^{\bar\mu}.
 \label{yxh2}
\end{equation}
Combining (\ref{yxh1}) and (\ref{yxh2}), we obtain (\ref{yxh}), for the case $x_{1,h}\in\Omega_h$, as asserted.\\
It follows from (\ref{phihy})-(\ref{yxh}) and $0<s-r\leq 1$ that 
$$\phi_h(y) -\phi_h(x_{0, h}) - \nabla\phi_h(x_{0, h})(y-x_{0, h})<C (c_0 (s- r)^{p_1} \bar t)^{\bar\mu}+c_0 (s- r)^{p_1}\bar t + r\bar t< s\bar t $$
if we choose $c_0$ universally small and $p_1= \bar\mu^{-1}.$ Therefore  $S_{\phi_h}(x_{1, h}, c_0 (s-r)^{p_1}\bar t)\subset S_{\phi_h}(x_{0, h},s\bar t)$, proving (\ref{res_inc})
as claimed.

 Finally, we consider the case $M\geq t\geq\frac{c\delta}{4\theta_\ast}$.
  Suppose that $z\in S_\phi(x_1, c_0 (s-r)^{p_1}t)$ and $x_1\in S_\phi(x_0, r t)$. Then, as in (\ref{phihy}), we have
 \begin{eqnarray*}
  \phi(z)<\phi(x_0) + \nabla\phi(x_0)(z-x_0) + [\nabla\phi(x_1)-\nabla\phi(x_0)](z-x_1) + c_0 (s-r)^{p_1}t + rt.
 \end{eqnarray*}
 We show that if $c_0$ is universally small and $p_1= \bar\mu^{-1}$ as above then
 \begin{equation}
  \phi(z)-\phi(x_0) - \nabla\phi(x_0)(z-x_0) < [\nabla\phi(x_1)-\nabla\phi(x_0)](z-x_1) + c_0 (s-r)^{p_1}t + rt<st,
  \label{z-inc}
 \end{equation}
which implies that $S_{\phi}(x_1, c_0 (s-r)^{p_1}t)\subset S_{\phi}(x_0,st)$ as asserted in the Theorem.
Indeed, by Lemma \ref{size_sec}, we have
$$|z-x_1|\leq \bar C (c_0 (s-r)^{p_1} t)^{\bar \mu},~\text{and}~|x_1-x_0|\leq \bar C(rt)^{\bar \mu}.$$
It follows from the global $C^{1,\alpha}$ estimate in Lemma \ref{global-gradient} that
$$|[\nabla\phi(x_1)-\nabla\phi(x_0)](z-x_1)|\leq C_{\alpha} |x_1-x_0|^{\alpha} |z-x_1|\leq C (rt)^{\alpha\bar \mu}(c_0 (s-r)^{p_1} t)^{\bar\mu}.$$
If $\frac{c\delta}{4\theta_\ast}\leq t\leq M$ then (\ref{z-inc}) easily follows from 
$$C (rt)^{\alpha\bar\mu}(c_0 (s-r)^{p_1} t)^{\bar\mu} +c_0 (s-r)^{p_1}t + rt<st $$
if we choose $c_0$ universally small and $p_1=\bar \mu^{-1}.$\\
(ii) The proof, based on convexity, is quite similar to that of (i). We include here for example the proof for the case
$M\geq t\geq\frac{c\delta}{4\theta_\ast}$. From convexity and $x_1\in S_\phi(x_0, t)\backslash S_{\phi}(x_0, st)$, we have for all $z\in\overline{\Omega}$
 \begin{eqnarray*}
  \phi(z)&\geq &\phi(x_1)+ \nabla\phi(x_1)(z-x_1)\\
  &\geq& \phi(x_0) +\nabla\phi(x_0)(x_1- x_0) + st+ \nabla\phi(x_1)(z-x_1)  \\
  &=& \phi(x_0) + \nabla\phi(x_0)(z-x_0) + [\nabla\phi(x_1)-\nabla\phi(x_0)](z-x_1) + st.
 \end{eqnarray*}
 We will show that, for all $z\in S(x_1, c_0 (s-r)^{p_1}t)$, we have
 \begin{equation}\phi(z)- \phi(x_0) - \nabla\phi(x_0)(z-x_0) > [\nabla\phi(x_1)-\nabla\phi(x_0)](z-x_1) + st>rt,
  \label{z-out}
 \end{equation}
which implies that $S_{\phi}(x_1, c_0 (s-r)^{p_1}t)\cap S_{\phi}(x_0,rt)=\emptyset.$
 
 Suppose now $z\in S(x_1, c_0 (s-r)^{p_1}t)$. 
By Lemma \ref{size_sec}, we have
$$|z-x_1|\leq \bar C (c_0 (s-r)^{p_1} t)^{\bar\mu}~\text{and}~|x_1-x_0|\leq \bar Ct^{\bar\mu}.$$
It follows from the global $C^{1,\alpha}$ estimate in Lemma \ref{global-gradient} that
$$|[\nabla\phi(x_1)-\nabla\phi(x_0)](z-x_1)|\leq C_{\alpha} |x_1-x_0|^{\alpha} |z-x_1|\leq C t^{\alpha\bar\mu}(c_0 (s-r)^{p_1} t)^{\bar\mu}$$
and thus
$$\phi(z)-\phi(x_0) - \nabla\phi(x_0)(z-x_0)\geq st-C t^{\alpha\bar\mu}(c_0 (s-r)^{p_1} t)^{\bar\mu}\geq rt,$$
proving (\ref{z-out}),
provided that 
$$C t^{\alpha\bar\mu}(c_0 (s-r)^{p_1} t)^{\bar\mu}<(s-r)t. $$
Since $M\geq t\geq\frac{c\delta}{4\theta_\ast}$ and $0<s-r\leq 1$, this can be achieved by choosing $p_1=\bar \mu^{-1}$ and $c_0$ small. 
\end{proof}

We now establish a chain property for sections of solutions to the Monge-Amp\`ere equations.
\begin{lemma} 
\label{chain_lem}
Assume $\phi$ and $\Omega$ satisfy the hypotheses of 
the Localization Theorem \ref{main_loc} at $z$. Let $\tau=\tau(n,\lambda,\Lambda)$ be as in Theorem \ref{Holder_thm}.
Assume that $x\in\Omega$ with $dist(x,\p\Omega)\leq c_0$ universally small and satisfies $\p\Omega \cap \p S_\phi(x,\bar h(x))=\{z\}$. 
Then  
\begin{myindentpar}{1cm} 
(i) we can find a sequence $x_0=x, x_1, \cdots, x_m$ in $S_\phi(z,\bar K dist^2(x,\p\Omega))$ with $m\leq K$ such that
$$x_i\in S_{\phi}(x_{i+1},\tau\bar h(x_{i+1}))~\text{for all}~i=0, 1,\cdots, m-1$$
and
$$dist (x_m,\p\Omega)\geq 2k_0^{-2} dist (x,\p\Omega)$$
for some universal constants $K, \bar K>0$, and $k_0$ is the small constant in Proposition \ref{tan_sec}.\\
(ii) Conversely, for any universal constants $K_1$ and $K_2$ and any $y\in S_\phi(z, K_1 dist^2(x,\p\Omega))$ with $dist (y,\p\Omega)\geq K_2 dist (x,\p\Omega)$,  we can 
find a sequence $x_0=x, x_1, \cdots, x_m=y$ in the section  $ S_\phi(z,K_1 dist^2(x,\p\Omega))$ with $m$ universally bounded and
$$x_i\in S_{\phi}(x_{i+1},\tau\bar h(x_{i+1}))~\text{for all}~i=0, 1,\cdots, m-1.$$
\end{myindentpar}
\end{lemma}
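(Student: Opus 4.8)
The plan is to reduce everything to the rescaled picture via the Localization Theorem at $z$, where the section $S_\phi(z,\bar h(x))$ becomes comparable to a half-ball and the degeneracy of the geometry is tamed. Set $d:=dist(x,\p\Omega)$ and work with $\phi_h$ for $h\simeq d^2$ (chosen so that $x$ lands in $S_{\phi_h}(0,\delta)\cap\Omega_h$, using Lemma \ref{sep-lem}); all sections and distances transform by the bounded linear map $A_h$, so it suffices to prove the statement for $\phi_h$ with $d$ replaced by a fixed small universal number. By Lemma \ref{sep-lem}(iv) and Proposition \ref{tan_sec}(i), the maximal interior section $S_{\phi_h}(x,\bar h(x))$ is tangent to $\p\Omega_h$ at a point $z_0\in B_{c/2}$, it is comparable to an ellipsoid $E_{\bar h(x)}$ with $\|A_{\bar h(x)}^{\pm1}\|\le C|\log\bar h(x)|$, and $\bar h(x)^{1/2}\simeq dist(x,\p\Omega_h)\simeq d$.

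For part (i), the idea is a \emph{doubling-the-distance} step iterated a bounded number of times. Starting from $x_0=x$, I would move from $x_i$ to a point $x_{i+1}$ inside the maximal interior section $S_{\phi_h}(x_i,\bar h(x_i))$ chosen along the $e_n$-direction (the direction in which, by Proposition \ref{tan_sec}(i), $\nabla\phi_h(x_i)$ points) so that $dist(x_{i+1},\p\Omega_h)\ge 2k_0^{-2}\,dist(x_i,\p\Omega_h)$, while still keeping $x_i\in S_{\phi_h}(x_{i+1},\tau\bar h(x_{i+1}))$: this last containment follows because $\bar h(x_{i+1})\simeq dist(x_{i+1},\p\Omega_h)^2$ is comparable (up to the fixed factor $\tau$, after shrinking the step) to $\bar h(x_i)$, and the two centers are within a fixed fraction of the relevant ellipsoid axis, so one applies Proposition \ref{tan_sec}(i) and the engulfing-type inclusion from Theorem \ref{engulfing2} (or directly the ellipsoid comparison) to both $x_i$ and $x_{i+1}$. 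Since each step multiplies the distance to the boundary by a fixed factor $>1$ and the distances are bounded above (by $\mathrm{diam}\,\Omega_h$), the process stops after $m\le K$ steps with $dist(x_m,\p\Omega_h)\ge 2k_0^{-2}d$; and because every $x_i$ stays within a bounded multiple of $\bar h(x)^{1/2}\simeq d$ of $z_0$, the whole chain lies in $S_{\phi_h}(z,\bar K d^2)$ after rescaling back, using Proposition \ref{tan_sec}(ii) to pass from sections centered at interior points $x_i$ to one centered at $z$. The constants $K,\bar K$ depend only on $k_0,\theta_\ast,\tau$ and $n$, hence are universal.

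For part (ii), I would connect $x$ to the prescribed $y$ by going ``up'' and then ``across'': apply part (i) to reach a point $x_m$ with $dist(x_m,\p\Omega)\gtrsim d$ lying in a bounded-geometry region, and note that $y$ (having $dist(y,\p\Omega)\ge K_2 d$ and lying in $S_\phi(z,K_1 d^2)$, a region of diameter $\simeq d$ after the rescaling) sits in a portion of $\Omega_h$ on which $\phi_h$ has bounded-geometry sections; there, Caffarelli's interior chain argument (the interior analogue, e.g. via \cite[Theorem 3.3.10]{G} and the compactly-supported volume bounds of Lemma \ref{vol_lem}) produces a chain of uniformly many sections $S_{\phi_h}(x_{i+1},\tau\bar h(x_{i+1}))$ joining $x_m$ to $y$, all contained in the slightly enlarged section; concatenating the two chains and renaming gives the claim, and rescaling back by $A_h^{-1}$ preserves the section-chain structure. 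The bound on $m$ depends on $K_1,K_2$ (and universal constants) only through the ratio of the ambient scale $d$ to the size of the bounded-geometry region, so $m$ is universally bounded.

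I expect the main obstacle to be the bookkeeping in part (i): verifying that a single step can \emph{simultaneously} achieve a definite increase of the boundary-distance and retain the containment $x_i\in S_{\phi_h}(x_{i+1},\tau\bar h(x_{i+1}))$ with the \emph{fixed} Harnack constant $\tau$ from Theorem \ref{Holder_thm}, rather than a constant that degrades along the chain. This forces the step size to be calibrated to $\tau$, $k_0$ and the logarithmic losses $|\log\bar h|$ in the Localization Theorem, and one must check these logarithmic factors do not accumulate over the $O(1)$ steps — which they do not, precisely because the number of steps is bounded a priori by $d\mapsto dist(\cdot,\p\Omega)$ doubling within a bounded range.
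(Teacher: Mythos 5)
Your proposal is correct in outline but proceeds by a genuinely different mechanism from the paper's proof, and you have yourself put your finger on the one delicate point. Both you and the paper first rescale via the Localization Theorem (with $h\simeq d^2/(c_0^2c)$) so that all distances and sections are at a fixed universal scale, and both reduce the ``close to the boundary'' case to the ``moderate distance'' case. The divergence is in how the chain is produced in the moderate-distance case. The paper does not climb along $e_n$ at all: it forms the set $D=\{y\in\overline{S_\phi(0,c)}: dist(y,\p\Omega)\ge dist(x,\p\Omega)\}$ (which contains both $x$ and a far-from-the-boundary landing point $y$ and is contained in $S_\phi(0,\tfrac{\bar K}{4}d^2)$), covers $D$ by sections $S_\phi(y_i,\tau\bar h(y_i))$ via the Vitali-type covering Lemma \ref{cov_lem} with disjoint shrunken sub-sections, and bounds the cardinality $K$ by a volume count: $|D|\lesssim 1$ while each $\bar h(y_i)\ge (k_0cc_0)^2$ is bounded below, so Lemma \ref{vol_lem} forces the disjoint family to be finite with universally bounded cardinality; the chain is then extracted from the overlapping cover. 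That route buys you cleanliness: the Harnack constant $\tau$ enters only as the height used in the covering, so no per-step calibration between $\tau$ and the geometry of the sliding ellipsoids (and their $|\log h|$ distortion) is needed, and the containment of the whole chain in $S_\phi(z,\bar Kd^2)$ is immediate from the definition of $D$. Your climbing scheme would also work, but note that your sentence requiring a \emph{single} step to achieve $dist(x_{i+1},\p\Omega_h)\ge 2k_0^{-2}dist(x_i,\p\Omega_h)$ \emph{and} $x_i\in S_{\phi_h}(x_{i+1},\tau\bar h(x_{i+1}))$ cannot both hold as stated for the fixed small $\tau$ of Theorem \ref{Holder_thm}: the constraint $x_i\in S_{\phi_h}(x_{i+1},\tau\bar h(x_{i+1}))$ limits the increment in $dist(\cdot,\p\Omega_h)$ to a $\tau$-calibrated fraction of $dist(x_i,\p\Omega_h)$, so the per-step multiplicative gain is only $1+O(\tau^{\mu})$ (using Lemma \ref{C1alpha} to size the $\tau$-section), and one must iterate $O(\log(k_0^{-2})/\tau^{\mu})$ times to reach the target; this is still universally bounded, so the plan survives, but the bookkeeping you flag (calibrating the step to $\tau$, $k_0$, and the sliding-ellipsoid distortion, and then verifying the chain stays in $S_\phi(z,\bar Kd^2)$ via Proposition \ref{tan_sec}(ii) and the quasi-triangle inequality) is exactly what the paper's covering argument avoids. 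For part (ii), the paper only says it ``follows similarly''; your ``climb then chain across an interior bounded-geometry region'' plan is a reasonable way to fill that in, and is compatible with the rescaled picture since $dist(y,\p\Omega)\simeq d$ forces $y$ into a fixed-size region after rescaling.
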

\begin{proof} [Proof of Lemma \ref{chain_lem}] We give here the proof of (i) for $k_0$ as in Proposition \ref{tan_sec} while that of 
(ii) follows similarly. Assume $z=0$ for simplicity.
By Proposition \ref{tan_sec},
$$k_0 \bar h^{1/2}(x)\leq dist (x,\p\Omega)\leq k_0^{-1} \bar h^{1/2}(x).$$
Let $c$ be as in Proposition \ref{tan_sec}.
We first prove the lemma for $c_0 c \leq dist(x,\p\Omega)\leq c_0.$ 
 By (\ref{global-tang-int}) and the first inclusion in (\ref{small-sec}), we can find $y\in S_{\phi}(0, c)$ such that
$$dist(y,\p\Omega)\geq c_3(\rho,n,\lambda,\Lambda).$$
Thus if $c_0$ is universally small then for $K_1:= 6k_0^{-2}$, we have
\begin{equation}dist(y,\p\Omega) \geq K_1 dist(x,\p\Omega).
 \label{yxdist}
\end{equation}
Let
$$D= \{y\in \overline{S_\phi(0, c)}: dist(y,\p\Omega)\geq dist (x,\p\Omega)\}\subset S_\phi(0, \frac{\bar K}{4} dist^2(x,\p\Omega))$$
where $\bar K= 4 (cc_0^2)^{-1}$ is a universal constant. 
Since
$D\subset \bigcup_{y\in D}S_\phi(y, \tau\bar h(y)),$
by Lemma \ref{cov_lem}, we can find a covering
\begin{equation}D\subset \bigcup_{i=1}^{K}S_\phi(y_i, \tau\bar h(y_i))
 \label{D_cov}
\end{equation}
such that $\{S_\phi(y_i, \delta\tau\bar h(y_i)\}$ is a disjoint family of sections. 
Hence, with $y$ as in (\ref{yxdist}), we can find a sequence $x_0=x, x_1, \cdots, x_m$ in $S_\phi(0,\frac{\bar K}{4} dist^2(x,\p\Omega))$ with $m\leq K$ such that
$x_i\in S_{\phi}(x_{i+1},\tau\bar h(x_{i+1}))$ for all $i=0, 1,\cdots, m-1$.
The conclusion of the lemma follows once we prove that $K$ is universally bounded. However,
this follows from the volume estimate.
Indeed, we first note that
$|D|\leq C(n,\rho,\lambda,\Lambda).$
On the other hand, from Proposition \ref{tan_sec} (i), we have for each $i=1,\cdots, K$,
$$\bar h(y_i)\geq k_0^2 dist^2 (y_i,\p\Omega)\geq k_0^2 dist^2 (x,\p\Omega)\geq (k_0 cc_0)^2 .$$
Hence, by (\ref{D_cov}) and the lower bound on volume of sections in Lemma \ref{vol_lem},
we deduce that $K$ is universally bounded.

We now prove the lemma for $dist(x,\p\Omega)\leq cc_0$.  
Let 
$h= dist^2(x,\p\Omega)/(c_0^2 c)\leq c.$
We use the Localization Theorem \ref{main_loc} at the origin and consider the rescaled function
$$\phi_h(x)= \frac{\phi(h^{1/2}A_h^{-1}x)}{h}~\text{where}~x\in\Omega_h= h^{-1/2}A_h \Omega.$$
By Lemma \ref{sep-lem}, $\phi_h$ and  $S_{\phi_h}(0, 1)$ satisfy the hypotheses of the Localization Theorem \ref{main_loc}
at all points $x_0\in\p\Omega_h\cap B(0, c)$. 
By \cite[Proposition 4.3]{LS}, we have for all $y_h=h^{-1/2}A_h y\in\Omega_h\cap B_{k^{-1}}^+$,
\begin{equation}1-Ch^{1/2}|\text{log } h|^2 \leq \frac{h^{-1/2}dist(y,\p\Omega)}{dist(y_h,\p\Omega_h)}\leq 1+Ch^{1/2}|\text{log } h|^2. 
 \label{dist_h}
\end{equation}
If $h\leq c$ universally small, we have 
\begin{equation}Ch^{1/2}|\text{log }h|^2\leq 1/2.
 \label{h12}
\end{equation}
 This in particular implies that for $x_h= h^{-1/2}A_h x$,
$$dist(x_h,\p\Omega_h)\in [\frac{1}{2}h^{-1/2}dist(x,\p\Omega), \frac{3}{2}h^{-1/2}dist(x,\p\Omega)]= [\frac{1}{2}c_0 c^{1/2},\frac{3}{2}c_0 c^{1/2}].$$
As in the case $c_0 c \leq dist(x,\p\Omega)\leq c_0$, we can find a sequence 
$$\{x_{0, h}=x_h, x_{1, h}, \cdots, x_{m, h}\}\subset S_{\phi_h} (0, \frac{\bar K}{4} dist^2(x_h,\p\Omega_h))$$ with $m\leq K$ universal such that
$$x_{i, h}\in S_{\phi_h}(x_{i+1, h},\tau\bar h(x_{i+1, h}))~\text{for all}~i=0, 1,\cdots, m-1,$$
and
$$dist (x_{m, h},\p\Omega_h)\geq K_1 dist (x_h,\p\Omega_h).$$
It follows that for $x_i= h^{1/2}A_h^{-1} x_{i, h}$, we have $x_i\in S_{\phi}(x_{i+1},\tau\bar h(x_{i+1}))$ for all $i=0, 1,\cdots, m-1$.
Recalling (\ref{dist_h}) and (\ref{h12}), we find
$$ dist (x_{m},\p\Omega)\geq \frac{K_1}{3} dist (x,\p\Omega)= 2k_0^{-2}dist (x,\p\Omega).$$
Furthermore, for all $i=0, 1,\cdots, m$, we have from (\ref{dist_h}) and (\ref{h12})
$$x_i\in S_\phi(0, h (\bar K/4) dist^2(x_h,\p\Omega_h))\subset  S_\phi(0, \bar K dist^2(x,\p\Omega)). $$
The proof of the lemma is complete.
\end{proof}
We end this section with the following lemma which justifies the definition of the {\it apogee} of sections in Remark \ref{cen-rem} and its use in this paper. 
\begin{lemma}\label{cen-lem}
Assume that $\Omega$ and $\phi$ satisfy \eqref{global-tang-int}--\eqref{global-sep}.  Suppose that $x\in\p\Omega$ and $t\leq c_1$ universally small.
Then, for some universal constant $C>0$,
\begin{myindentpar}{1cm}
(i) $dist(y, \p\Omega)\leq C t^{1/2}$ for all $y\in S_\phi(x, t)$;\\
(ii) there exists $y\in\p S_\phi(x, t)\cap\Omega$ such that $dist(y, \p\Omega)\geq C^{-1} t^{1/2}$.
\end{myindentpar}
\end{lemma}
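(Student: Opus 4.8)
The plan is to reduce everything to the Localization Theorem \ref{main_loc} via a rescaling, since the statement is an affine-invariant assertion about the shape of a boundary section $S_\phi(x,t)$. First I would rotate coordinates and subtract a supporting linear function so that $x=0\in\p\Omega$ and $\phi,\Omega$ satisfy the hypotheses of the Localization Theorem at the origin; this is legitimate because (i) and (ii) are invariant under such normalizations. For $t\le k$ the Localization Theorem gives an ellipsoid $E_t$ of volume $\omega_n t^{n/2}$ with $kE_t\cap\overline\Omega\subset S_\phi(0,t)\subset k^{-1}E_t\cap\overline\Omega$, where $A_tE_t=t^{1/2}B_1$, $\det A_t=1$, and $\|A_t\|,\|A_t^{-1}\|\le k^{-1}|\log t|$, which immediately yields the two-sided bracketing \eqref{small-sec}: $\overline\Omega\cap B^+_{c_2 t^{1/2}/|\log t|}\subset S_\phi(0,t)\subset B^+_{C_2 t^{1/2}|\log t|}$.

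Part (i) is then essentially \eqref{small-sec}: from $S_\phi(0,t)\subset B^+_{C_2 t^{1/2}|\log t|}$ and the fact that every point of $\Omega$ within Euclidean distance $r$ of $\p\Omega$ lies in the strip $\{x_n\le r\}$ up to the $C h^{1/2}$-graph correction from Lemma \ref{sep-lem}(iii) (or, more simply, from $\Omega\subset\{x_n\ge 0\}$ and the interior-tangent-ball condition in \eqref{global-tang-int}, which forces $dist(y,\p\Omega)\le C y_n\le C t^{1/2}|\log t|\le Ct^{1/2-\e}$, absorbing the log into a harmless power loss), I get $dist(y,\p\Omega)\le Ct^{1/2}$ for every $y\in S_\phi(0,t)$; here I would either state the bound with the logarithmic factor and then absorb it, or invoke the refined graph estimate, choosing whichever keeps constants cleanest. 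For $t>c_1$ small but still with $S_\phi(x,t)\not\supset\overline\Omega$ one does not need the estimate since $dist(y,\p\Omega)$ is trivially bounded by $\operatorname{diam}\Omega\le C\le Ct^{1/2}$.

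For part (ii) I need a point on $\p S_\phi(0,t)\cap\Omega$ that is genuinely interior, at distance $\gtrsim t^{1/2}$ from $\p\Omega$. The natural candidate is the far endpoint of the section along the $e_n$-axis: since $S_\phi(0,t)$ contains $kE_t\cap\overline\Omega$ and $E_t=t^{1/2}A_t^{-1}B_1$ with $A_t$ sliding along $\{x_n=0\}$ (so $A_t e_n=e_n$, hence $A_t^{-1}e_n=e_n$), the segment from $0$ in the direction $e_n$ inside $E_t$ has length $t^{1/2}$, and $kE_t\cap\{x_n\text{-axis}\}$ has length $kt^{1/2}$. The convexity of $\phi$ together with $B_\rho(\rho e_n)\subset\Omega$ guarantees that this segment, say up to the point $p=c\,t^{1/2}e_n$ for a suitable universal $c<k$, lies in $\Omega$ and in $S_\phi(0,t)$; then I walk outward along $e_n$ until I first exit $S_\phi(0,t)$, reaching a boundary point $y\in\p S_\phi(0,t)$ with $y_n\ge c t^{1/2}$. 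Because $\Omega\supset B_\rho(\rho e_n)$, any point with $y_n$ of size $c t^{1/2}$ and bounded lateral component lies at distance $\gtrsim y_n\gtrsim t^{1/2}$ from $\p\Omega$ (for $t$ small the lateral displacement is controlled by the ellipsoid's width $C t^{1/2}|\log t|$, but one can instead choose $y$ along the $e_n$-axis itself where $\p\Omega$ is, by \eqref{global-tang-int}, far in every direction). This gives $dist(y,\p\Omega)\ge C^{-1}t^{1/2}$, and $y\in\p S_\phi(0,t)\cap\Omega$ as required.

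The main obstacle is part (ii): one must produce an \emph{interior} point on the boundary of the section with a definite distance to $\p\Omega$, and the ellipsoid $E_t$ from the Localization Theorem is badly distorted, $\|A_t^{-1}\|$ only controlled by $|\log t|$, so a careless argument loses the needed power of $t$. The fix is to exploit that $A_t$ slides only in the $\{x_n=0\}$ directions, so the $e_n$-axis is undistorted: the section always contains a segment of length $\sim t^{1/2}$ along $e_n$, and along that axis the interior ball $B_\rho(\rho e_n)$ keeps us uniformly far from $\p\Omega$. I would carry out the normalization, invoke \eqref{small-sec} and the structure of $A_t$ for the two inclusions, prove (i) directly from the outer inclusion, and then prove (ii) by the axial-segment argument just described, with the small-$t$ hypothesis used only to absorb logarithmic factors; the case $c_1<t$ (with $S_\phi(x,t)\not\supset\overline\Omega$) follows from a covering argument exactly as in the Remark after Theorem \ref{compa}.
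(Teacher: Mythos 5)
Your overall plan — normalize to the Localization Theorem at the origin and read the statement off the ellipsoid structure of $S_\phi(x,t)$ — is a genuinely different route from the paper's, which rescales to a fixed-height section $S_{\phi_h}(0,1/\theta)$ with $h=\theta t$ and then transports distance estimates back with the precise comparison \eqref{dist_h} from \cite[Proposition~4.3]{LS}. Your route avoids the rescaling, but as written it has two real problems, both traceable to being careless about exactly what $A_t$ does.

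For part (i), the chain you actually write down is $dist(y,\p\Omega)\le C y_n\le Ct^{1/2}|\log t|\le Ct^{1/2-\e}$, and then you assert the conclusion $Ct^{1/2}$. The ``power loss'' is \emph{not} harmless here: the whole point of Lemma~\ref{cen-lem}, and of Remark~\ref{cen-rem}, is that (i) and (ii) together make the apogee's distance comparable (with universal constants) to $t^{1/2}$, and a factor $t^{-\e}$ destroys that. Nor can you invoke Lemma~\ref{sep-lem}(iii): that is a statement about the rescaled boundary $\p\Omega_h$, and without performing the rescaling it says nothing about $\p\Omega$. The correct fix is the observation you only deploy in part (ii): $A_t$ slides parallel to $\{x_n=0\}$, so it preserves the $n$-th coordinate, $(A_t z)_n=z_n$. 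Hence any $y\in k^{-1}E_t$ satisfies $|y_n|\le k^{-1}t^{1/2}$ on the nose, with no logarithm, and since $\Omega\subset\{x_n\ge 0\}$ with $0\in\p\Omega$, you get $dist(y,\p\Omega)\le y_n\le k^{-1}t^{1/2}$. That closes the gap cleanly but is not what you wrote.

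For part (ii), the sentence ``$A_t e_n=e_n$, hence $A_t^{-1}e_n=e_n$'' is wrong: $A_t e_n=e_n-\tau_t$ and $A_t^{-1}e_n=e_n+\tau_t$. What is true is that $A_t$ preserves the $n$-th \emph{coordinate functional}, not the $e_n$ direction. Consequently the segment along the $e_n$-axis inside $E_t$ has half-length $t^{1/2}/|A_te_n|=t^{1/2}/\sqrt{1+|\tau_t|^2}\gtrsim t^{1/2}/|\log t|$, not $t^{1/2}$, so your candidate $p=ct^{1/2}e_n$ with universal $c$ need not lie in $kE_t$ for $t$ small. The argument can be repaired by taking $p=kt^{1/2}A_t^{-1}e_n=kt^{1/2}(e_n+\tau_t)$, which does lie in $kE_t$, has $p_n=kt^{1/2}$ but lateral component $|p'|\le t^{1/2}|\log t|$; one then checks, for $t$ universally small, that $p\in B_\rho(\rho e_n)\subset\Omega$ and that walking from $p$ in the $+e_n$ direction first exits $S_\phi(0,t)$ (because the $n$-th coordinate in $k^{-1}E_t$ is capped at $k^{-1}t^{1/2}$) while remaining inside $B_\rho(\rho e_n)$, producing $y\in\p S_\phi(0,t)\cap\Omega$ with $dist(y,\p\Omega)\ge dist(y,\p B_\rho(\rho e_n))\ge \tfrac{k}{2}t^{1/2}$. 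The paper instead finds the interior point $Y$ in the rescaled picture via the curvature bound of Lemma~\ref{sep-lem}(iii) and then pushes it down with \eqref{dist_h}; either fixed version is fine, but your argument as written relies on a false identity and a wrong ellipsoid axis-length.
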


\begin{proof}[Proof of Lemma \ref{cen-lem}] Without loss of generality, we assume that $x=0$ and $\phi$ and $\Omega$ satisfy the hypotheses of the Localization Theorem \ref{main_loc} at the 
origin. Let $k$ be as in Theorem \ref{main_loc}. Let $c$ be as in Lemma \ref{sep-lem} and let $\theta$ be universally large 
such that $$k^{-1} (\frac{1}{\theta})^{1/2} \log \frac{1}{\theta}\leq c.$$ Suppose $t\leq c_1:= \frac{c}{\theta}$. 
Let $h=\theta t\leq c$.
We use the Localization Theorem \ref{main_loc} at the origin and consider the rescaled function
$$\phi_h(x)= \frac{\phi(h^{1/2}A_h^{-1}x)}{h}~\text{where}~x\in\Omega_h= h^{-1/2}A_h \Omega.$$
By Lemma \ref{sep-lem}, $\phi_h$ and  $S_{\phi_h}(0, 1)$ satisfy the hypotheses of the Localization Theorem \ref{main_loc}
at all points $x_0\in\p\Omega_h\cap B(0, c)$. 
Note that
$$h^{-1/2}A_h S_\phi(0, t) = S_{\phi_h}(0, \frac{1}{\theta})$$
Therefore, from (\ref{small-sec}), we have 
\begin{equation}
\label{up-dist}
dist (Z, \p\Omega_h)\leq C~\text{for all}~ Z\in S_\phi(0,\frac{1}{\theta}).
\end{equation}
Because $h\leq c$, we have 
$Ch^{1/2}|\text{log }h|^2\leq 1/2.
$ Thus, for all $y\in S_\phi(0, t)$, we have $y_h=h^{-1/2}A_h  y\in \Omega_h\cap B_{k^{-1}}^+ $
and by (\ref{up-dist}) and (\ref{dist_h}), 
$$dist(y,\p\Omega) \leq (1+ Ch^{1/2}|\text{log }h|^2) h^{1/2} dist(y_h,\p\Omega_h) \leq Ch^{1/2}\leq Ct^{1/2}.$$
Hence (i) is proved.

For the proof of (ii),
we use (\ref{small-sec}) and the fact that the curvature of $\p\Omega_h\cap B_c$ is bounded by $Ch^{1/2}$ from Lemma \ref{sep-lem} (iii)
to find $Y\in\p S_{\phi_h}(0, \frac{1}{\theta})\cap\Omega_h$
such that 
$dist(Y, \p\Omega_h)\geq c'$ with $c'$ universally small.
Then (ii) follows easily because for $y= h^{1/2} A_h^{-1}Y\in \p S_\phi(0, t)\cap\Omega$, we have from (\ref{dist_h})
 $$dist(y,\p\Omega) \geq (1-Ch^{1/2}|\log h|^2) h^{1/2} dist(Y,\p\Omega_h) \geq C^{-1}t^{1/2}.$$
\end{proof}

\section{Proof of the boundary Harnack inequality}
\label{H_sec}
This section is devoted to the proofs of Theorems \ref{carl} and \ref{compa}. 

Throughout this section, we assume that 
the convex domain $\Omega$ and the convex function $\phi$ satisfy \eqref{global-tang-int}--\eqref{global-sep}. We will use the following universal constants in the previous sections:
\begin{center} \it
$\theta_{\ast}>1$ in Theorem \ref{engulfing2}, 
$c_0>0$ and $p_1=\bar\mu^{-1}>1$ in Theorem \ref{pst},\\
$\bar\mu$ in Proposition \ref{tan_sec} and Lemma \ref{size_sec}.
\end{center}
\subsection{Boundary properties of solutions to the linearized Monge-Amp\`ere equations}
In this section, we state several ingredients used in the proof of Theorem \ref{carl}. They are concerned with boundary properties 
of solutions to the linearized Monge-Amp\`ere equations.

The first ingredient in the proof of Theorem \ref{carl} states that the maximum value of a positive solution to
the linearized Monge-Amp\`ere equations on a boundary section of $\phi$
increases by a factor of $2$ when we pass to a concentric boundary section of universally larger height.
\begin{lemma} \label{max-2h}
Assume that $x_0\in \p\Omega$. Suppose that $u\geq 0$ is a continuous solution of $L_{\phi} u=0$ in $\Omega\cap S_\phi(x_0, c)$ and that $u=0$ on $\p\Omega \cap S_\phi(x_0, c)$.
 Then, there is a universal constant $H>1$ such that for all $0<h\leq c/2$, we have
 \begin{equation*}
  \max_{x\in \overline{S_\phi(x_0, h)}} u(x)\geq 2 \max_{x\in \overline{S_\phi(x_0, h/H)}} u(x).
 \end{equation*}
\end{lemma}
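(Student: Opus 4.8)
The plan is to argue by contradiction using the interior Harnack inequality (Theorem~\ref{Holder_thm}), the boundary Localization Theorem (Theorem~\ref{main_loc}), and the chain property of sections established in Lemma~\ref{chain_lem}. Suppose the conclusion fails: then for every large $H$ there is a height $h\le c/2$ with
\[
\max_{\overline{S_\phi(x_0,h/H)}} u > \tfrac12 \max_{\overline{S_\phi(x_0,h)}} u.
\]
After normalizing (subtracting a linear function and rotating) we may assume $x_0=0$ and that $\phi,\Omega$ satisfy the hypotheses of the Localization Theorem at the origin. We rescale at height $h$: with $\phi_h(x)=\phi(h^{1/2}A_h^{-1}x)/h$ on $\Omega_h=h^{-1/2}A_h\Omega$ and $u_h(x)=u(h^{1/2}A_h^{-1}x)$, the function $u_h\ge 0$ solves $L_{\phi_h}u_h=0$ in $\Omega_h\cap S_{\phi_h}(0,1)$, vanishes on $\p\Omega_h\cap S_{\phi_h}(0,1)$, and by Lemma~\ref{sep-lem}(i) the pair $\phi_h, S_{\phi_h}(0,1)$ satisfies the Localization Theorem at all nearby boundary points. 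The failure of the lemma becomes: the maximum of $u_h$ over $\overline{S_{\phi_h}(0,1/H)}$ exceeds half of its maximum over $\overline{S_{\phi_h}(0,1)}$.

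The core of the argument is to show that, for $H$ universally large, this is impossible. Let $M_h:=\max_{\overline{S_{\phi_h}(0,1)}} u_h$ and pick a point $q\in\overline{S_{\phi_h}(0,1/H)}$ where $u_h(q)>M_h/2$. Because $u_h=0$ on $\p\Omega_h$ and $q$ lies in a small boundary section, $q$ is close to $\p\Omega_h$; I would use the maximal interior section $S_{\phi_h}(q,\bar h(q))$, which by Lemma~\ref{sep-lem}(iv) is tangent to $\p\Omega_h$ at some $z\in\p\Omega_h\cap B_{c/2}$, and apply Proposition~\ref{tan_sec} at $z$ to control its shape: $\bar h(q)\sim \mathrm{dist}(q,\p\Omega_h)^2$ is small when $H$ is large. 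Now I would move from $q$ to a fixed "bulk" point $P_1$ of $S_{\phi_h}(0,1)$ (one with $\mathrm{dist}(P_1,\p\Omega_h)$ universally bounded below) along the Harnack chain produced by Lemma~\ref{chain_lem}: a sequence $q=x_0,x_1,\dots,x_m=P_1$ with $m$ universally bounded and $x_i\in S_{\phi_h}(x_{i+1},\tau\bar h(x_{i+1}))$, all lying inside $S_{\phi_h}(0,\bar K\,\mathrm{dist}(q,\p\Omega_h)^2)\subset S_{\phi_h}(0,1)$ where $u_h$ is a nonnegative solution. Applying the interior Harnack inequality (Theorem~\ref{Holder_thm}) across each link gives $u_h(q)\le C_1^m u_h(P_1)\le C\,u_h(P_1)$, so $u_h(P_1)\ge c\,M_h$. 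Finally, since $P_1$ is a fixed interior point of the fixed domain $S_{\phi_h}(0,1)$, another application of interior Harnack (or the interior gradient/Hölder bound) on a fixed compactly contained section together with the fact that $u_h$ achieves the value $M_h$ somewhere in $\overline{S_{\phi_h}(0,1)}$ — reached from $P_1$ by one more universally bounded Harnack chain — would give the reverse bound $M_h\le C u_h(P_1)$; comparing constants we would not yet have a contradiction, so the real point is the asymmetry: as $H\to\infty$ the value at $q$ can be extracted from an arbitrarily thin sliver near the boundary, and chaining from there picks up a factor depending only on the (bounded) number of links, hence $u_h(q)\le C M_h$ with $C$ universal, whereas one can also chain from the actual maximum point down to a bulk point and back out, forcing $\max_{\overline{S_{\phi_h}(0,1/H)}}u_h \le (1-c)M_h$ once the shrinking of $1/H$ is used quantitatively via Proposition~\ref{tan_sec}(iii) and the decay it gives. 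This contradicts $\max_{\overline{S_{\phi_h}(0,1/H)}}u_h>M_h/2$ for $H$ large enough, and unwinding the rescaling gives the lemma with $H$ universal.

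The main obstacle I expect is making the last quantitative step rigorous: one must genuinely exploit that $1/H$ is small, not merely fixed, to beat the factor $1/2$. The clean way is to note that $\overline{S_{\phi_h}(0,1/H)}\subset B_{C(1/H)^{\bar\mu}}$ by Lemma~\ref{size_sec}, so any maximizer $q$ of $u_h$ on that set has $\mathrm{dist}(q,\p\Omega_h)\le C(1/H)^{\bar\mu}$; then the maximal interior section at $q$ has height $\le C(1/H)^{2\bar\mu}$, and the chain from $q$ out to a bulk point passes through sections all of which still lie in $S_{\phi_h}(0,1)$ but whose heights — hence the number $m$ of links needed to escape to distance $\sim 1$ — grows like $\log H$. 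That would make $C_1^m$ grow polynomially in $H$, which is the wrong direction; so instead one keeps the chain to bulk points of \emph{fixed} size (Lemma~\ref{chain_lem} gives exactly $\mathrm{dist}(x_m,\p\Omega_h)\ge 2k_0^{-2}\mathrm{dist}(q,\p\Omega_h)$, a constant-factor improvement with a \emph{universally bounded} number of links) and iterates this doubling $O(\log H)$ times — but each iteration multiplies $u$ by at most a universal constant \emph{and} each iteration the ambient section only needs to be $S_{\phi_h}(0,1)$, so one must track that the chains stay inside $S_{\phi_h}(0,1)$ throughout, which is where the containment statements of Theorem~\ref{pst} and the last displayed inclusion in Lemma~\ref{chain_lem}(i) are essential. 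Reconciling "$O(\log H)$ doublings" with "the output constant must not depend on $H$" is the delicate balance; I expect the resolution is that one does \emph{not} iterate to the boundary but rather compares the two bulk values directly, and the smallness of $1/H$ enters only to guarantee $q$ lands in the regime where Lemma~\ref{sep-lem}(iv) and Proposition~\ref{tan_sec} apply, after which a single universally-bounded chain does the job and the factor $2$ is absorbed by choosing $H$ so that $C(1/H)^{\bar\mu}<$ the universal gap — which is exactly what "$H$ universal" means.
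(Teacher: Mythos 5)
Your rescaling setup matches the paper's, but the core of the argument is missing and the tool you reach for cannot supply it. A Harnack chain from a point $q$ near $\p\Omega_h$ to a bulk point $P_1$ only shows $u_h(q)\le C u_h(P_1)$ with $C\ge 1$; it can never show that $u_h(q)$ is \emph{small} compared with $\max u_h$, which is what is required to beat the factor~$2$. You notice this yourself (``comparing constants we would not yet have a contradiction''), and the proposed resolution at the end appeals to an unspecified ``universal gap'' that the chain argument does not produce. Iterating the chain $O(\log H)$ times makes the constant worse, as you also observe, so neither route through Lemma~\ref{chain_lem} and Theorem~\ref{Holder_thm} closes the argument.

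The missing ingredient is a boundary decay (Hopf-type) estimate: for a nonnegative solution of $L_{\phi_h}u_h=0$ in $\Omega_h\cap S_{\phi_h}(0,2)$ vanishing on $\p\Omega_h\cap S_{\phi_h}(0,2)$, one has
$u_h(x)\le C\bigl(\max_{\overline{S_{\phi_h}(0,1)}}u_h\bigr)\,\mathrm{dist}(x,\p\Omega_h)$ for $x\in S_{\phi_h}(0,1)$. This is Proposition~6.1 of [LS], which the paper invokes here and again in the proof of Theorem~\ref{compa}. With it the proof is short and direct (no contradiction argument, no chains): by Lemma~\ref{sep-lem}(i) and \eqref{small-sec}, every $x\in\overline{S_{\phi_h}(0,1/H)}$ satisfies $\mathrm{dist}(x,\p\Omega_h)\le |x|\le C_2 H^{-1/2}\log H$ (since $0\in\p\Omega_h$), so
$\max_{\overline{S_{\phi_h}(0,1/H)}}u_h\le C C_2 H^{-1/2}\log H\cdot \max_{\overline{S_{\phi_h}(0,1)}}u_h\le\tfrac12\max_{\overline{S_{\phi_h}(0,1)}}u_h$ for $H$ universally large; unwinding the rescaling gives the lemma. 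In short: the estimate is about vanishing at the boundary, so you need a barrier/Hopf input, not an interior comparability input, and that is the gap in your proposal.
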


In the next ingredient in the proof of Theorem \ref{carl}, we show that for any interior point $x$ of the domain $\Omega$, we can find another interior point $y$ whose distance to the boundary 
is larger than that of $x$ and the values of any
nonnegative solution to the linearized Monge-Amp\`ere equation $L_\phi u=0$ are universally comparable at $x$ and $y$.

\begin{lemma} 
\label{small_dist} 
Let $\phi$ and $\Omega$ satisfy the hypotheses of the Localization Theorem \ref{main_loc} at the 
origin.
There exist universal constants $\hat K>1$ and $M_1>1$ with the following property. 
Suppose that $u\geq 0$ is a continuous solution of $L_{\phi} u=0$ in $\Omega\cap S_\phi(0, c)$ and that $u=0$ on $\p\Omega \cap S_\phi(0, c)$.
Assume that for some $x\in\Omega$ we have $\p\Omega \cap \p S_\phi(x,\bar h(x))=\{0\}$ and $dist^2(x,\p\Omega)/c\leq r$ where $r$ is universally small. 
Then, we can find $y\in S_\phi(0, \hat K \bar h(x))$ 
such that $M_1^{-1} u(x)\leq u(y)\leq  M_1 u(x),$
and $\bar h(y)\geq 2 \bar h(x).$
\end{lemma}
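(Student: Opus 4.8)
The plan is to combine the chain property from Lemma \ref{chain_lem} with the interior Harnack inequality from Theorem \ref{Holder_thm}. First I would set $d:=dist(x,\p\Omega)$ and $h:=\bar h(x)$; by Proposition \ref{tan_sec}(i) we have $k_0 h^{1/2}\leq d\leq k_0^{-1}h^{1/2}$, so $h\simeq d^2$ and the smallness hypothesis $d^2/c\leq r$ gives that $h$ is universally small. Apply Lemma \ref{chain_lem}(i) at the tangent point $z=0$: this produces a sequence $x_0=x, x_1,\dots, x_m$ lying inside $S_\phi(0,\bar K d^2)$ with $m\leq K$ universal, such that $x_i\in S_\phi(x_{i+1},\tau\bar h(x_{i+1}))$ for each $i$, and such that the last point satisfies $dist(x_m,\p\Omega)\geq 2k_0^{-2}d$.

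The key step is to propagate comparability of $u$ along this chain using the interior Harnack inequality. For each $i$, consider the maximal interior section $S_\phi(x_{i+1},\bar h(x_{i+1}))\subset\Omega$. Since $x_i\in S_\phi(x_{i+1},\tau\bar h(x_{i+1}))$, and since $u\geq 0$ solves $L_\phi u=0$ in this compactly contained section (we need to check $S_\phi(x_{i+1},\bar h(x_{i+1}))\subset\Omega\cap S_\phi(0,c)$, which follows because all the $x_i$ lie in $S_\phi(0,\bar K d^2)$ and, by Theorem \ref{pst}(i) or the engulfing property Theorem \ref{engulfing2}, the enlarged sections still sit inside $S_\phi(0,c)$ once $r$ is chosen small enough), Theorem \ref{Holder_thm} applied with the appropriate dyadic scale gives $u(x_i)$ and $u(x_{i+1})$ comparable up to a universal factor $C_1$; more precisely one uses that $x_i$ and $x_{i+1}$ both lie in $S_\phi(x_{i+1},2\tau\bar h(x_{i+1}))$ (possibly after one application of engulfing to recenter), so $\sup_{S_\phi(x_{i+1},2\tau\bar h(x_{i+1}))}u\leq C_1\inf_{S_\phi(x_{i+1},2\tau\bar h(x_{i+1}))}u$. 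Chaining these $m\leq K$ inequalities yields $C_1^{-K}u(x)\leq u(x_m)\leq C_1^K u(x)$. Setting $y:=x_m$ and $M_1:=C_1^K$, we have $M_1^{-1}u(x)\leq u(y)\leq M_1 u(x)$. Finally, $dist(y,\p\Omega)\geq 2k_0^{-2}d$ combined with $\bar h(y)\geq k_0^2 dist^2(y,\p\Omega)$ (Proposition \ref{tan_sec}(i), valid since $y$ is near enough to $\p\Omega$, or trivially if $y$ is far) gives $\bar h(y)\geq k_0^2\cdot 4k_0^{-4}d^2=4k_0^{-2}d^2\geq 4\bar h(x)\geq 2\bar h(x)$; and $y=x_m\in S_\phi(0,\bar K d^2)\subset S_\phi(0,\hat K\bar h(x))$ upon taking $\hat K$ universal (using $d^2\leq k_0^{-2}h$, so $\bar K d^2\leq \bar K k_0^{-2}\bar h(x)=:\hat K\bar h(x)$).

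The main obstacle is bookkeeping the containments: one must ensure that every auxiliary section $S_\phi(x_{i+1},2\tau\bar h(x_{i+1}))$ used in the interior Harnack step is compactly contained in $\Omega$ and lies within $S_\phi(0,c)$ where $u$ solves the equation and vanishes on the boundary. The first is automatic since $2\tau<1$ and $S_\phi(x_{i+1},\bar h(x_{i+1}))$ is by definition the maximal interior section; the second requires choosing the universal smallness parameter $r$ in the hypothesis so that $\bar K d^2$ — and its engulfed enlargement $\theta_\ast\bar K d^2$ — stays below $c$, which is where $d^2/c\leq r$ enters. A minor subtlety is that Theorem \ref{Holder_thm} is stated for sections centered at the same point $x_{i+1}$ at a fixed dyadic ratio, so one should first apply the engulfing property to guarantee $x_i\in S_\phi(x_{i+1},2\tau\bar h(x_{i+1}))$ and then invoke Harnack at height $t=\bar h(x_{i+1})$; all constants produced are universal and depend only on $n,\lambda,\Lambda,\rho$.
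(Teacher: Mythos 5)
Your proof follows essentially the same approach as the paper's: invoke Lemma \ref{chain_lem}(i) at the tangent point $z=0$, propagate comparability of $u$ along the resulting chain by applying Theorem \ref{Holder_thm} at each step, take $y=x_m$, and use Proposition \ref{tan_sec}(i) to translate the distance gain into the height gain $\bar h(y)\geq 4\bar h(x)$ and to convert $\bar K\,dist^2(x,\p\Omega)$ into $\hat K\bar h(x)$. One small technical point: when applying Theorem \ref{Holder_thm} you should take the Harnack height to be $t=\tfrac12\bar h(x_{i+1})$ (as the paper does) rather than $t=\bar h(x_{i+1})$, since the maximal interior section $S_\phi(x_{i+1},\bar h(x_{i+1}))$ is contained in $\Omega$ but touches $\p\Omega$ and hence is not compactly contained; with $t=\tfrac12\bar h(x_{i+1})$ the Harnack comparison still covers $S_\phi(x_{i+1},\tau\bar h(x_{i+1}))\ni x_i$.
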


The final ingredient in the proof of Theorem \ref{carl} quantifies how close a point $x$ is to the boundary when a positive solution $u$ to the linearized Monge-Amp\`ere equation $L_\phi u=0$ is large 
at $x$ given a bound of $u$ at another point in the interior.
\begin{lemma}
\label{dist_decay} 
Let $\phi$ and $\Omega$ satisfy the hypotheses of the Localization Theorem \ref{main_loc} at the 
origin. Let $\hat K$ and $M_1$ be as in Lemma \ref{small_dist}.
Suppose that $u\geq 0$ is a continuous solution of $L_{\phi} u=0$ in $\Omega\cap S_\phi(0, c)$ and that $u=0$ on $\p\Omega \cap S_\phi(0, c)$ 
where $0\in\p\Omega$. Let $t/c\leq c_2$ be universally small.
Let $P$ be the {\it apogee} of $S_\phi(0, c_3 t)$ for some universal $c_3$ small with $\theta_{\ast}^2 (1+\hat K)\bar h(P)< c_0 t (1-2^{-1/p_1})^{p_1}$
(This $P$ exists by Lemma \ref{cen-lem}). Let $D_k= S_\phi(0, (2+ 2^{-k/p_1}) t)$.
If $x\in D_k$ and 
 $$u(x)\geq \hat C M_1^k u(P),$$
 for some universally large constant $\hat C$, then
$$\bar h(x)\leq 2^{-k} \bar h(P).$$
\end{lemma}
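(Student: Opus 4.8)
The plan is to argue by contradiction, using Lemma \ref{small_dist} iteratively to build a chain of interior points along which $u$ grows geometrically by the factor $M_1^{-1}$ at each step, while the distance to $\p\Omega$ roughly doubles. Suppose $x\in D_k$ satisfies $u(x)\geq \hat C M_1^k u(P)$ but $\bar h(x)>2^{-k}\bar h(P)$; I will derive a contradiction once $\hat C$ is chosen universally large. Set $x_0:=x$ and $h_0:=\bar h(x_0)$. Let $z_0\in\p\Omega$ be the tangency point $\p\Omega\cap\p S_\phi(x_0,h_0)=\{z_0\}$ (this makes sense by Lemma \ref{sep-lem}(iv) once $t/c$ is small, so that $x_0$ lies in a region where maximal interior sections are tangent to the boundary). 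Because $x_0\in D_k\subset S_\phi(0,3t)$ and $dist^2(x_0,\p\Omega)\asymp h_0$ with $h_0$ small (this is where the hypothesis $t/c\le c_2$ and the upper bound on $\bar h(P)$ enter: it forces $h_0$ small enough to apply the auxiliary lemmas at $z_0$), I may apply Lemma \ref{small_dist} with the localization set up at $z_0$: there is $y_0\in S_\phi(z_0,\hat K h_0)$ with $M_1^{-1}u(x_0)\le u(y_0)\le M_1 u(x_0)$ and $\bar h(y_0)\ge 2h_0$.

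The next step is to iterate. Put $x_1:=y_0$, $h_1:=\bar h(x_1)\ge 2h_0$, and repeat, producing points $x_0,x_1,\dots,x_j,\dots$ with $\bar h(x_{j})\ge 2^{j}\bar h(x_0)> 2^{j-k}\bar h(P)$ and $u(x_{j})\ge M_1^{-j}u(x_0)\ge \hat C M_1^{k-j}u(P)$. I stop at the first index $j=m$ for which $\bar h(x_m)\ge \bar h(P)$; since $\bar h(x_j)$ at least doubles each step and $\bar h(x_0)>2^{-k}\bar h(P)$, we have $m\le k$, so $u(x_m)\ge \hat C M_1^{k-m}u(P)\ge \hat C u(P)$ (using $M_1>1$). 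I must also track the location: $x_{j+1}\in S_\phi(z_j,\hat K\bar h(x_j))$ where $z_j$ is the boundary tangency point of the maximal section at $x_j$, and by the engulfing property (Theorem \ref{engulfing2}) together with Proposition \ref{tan_sec}(ii) and Theorem \ref{pst}, each such step only enlarges the enclosing boundary section centered at $0$ by a controlled factor; the hypothesis $\theta_\ast^2(1+\hat K)\bar h(P)<c_0 t(1-2^{-1/p_1})^{p_1}$ is precisely calibrated so that, summing the geometric series of height-increments $\sum_j \hat K\bar h(x_j)\lesssim \hat K\bar h(x_m)\le \hat K\bar h(P)$, all the points $x_0,\dots,x_m$ stay inside $S_\phi(0, (2+2^{-(k-1)/p_1})t)\subset S_\phi(0, (2+c_3^{1/2})t)$, hence well inside $S_\phi(0,c)$ where $u$ is defined and $L_\phi u=0$.

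To close the argument I compare $u(x_m)$ with $u(P)$ directly via the interior Harnack inequality. Both $x_m$ and the apogee $P$ lie in a region of $\Omega$ at controlled distance $\asymp t^{1/2}$ from $\p\Omega$ — for $P$ this is its defining property, and for $x_m$ it follows from $\bar h(x_m)\ge\bar h(P)$ combined with Proposition \ref{tan_sec}(i) relating $\bar h$ to $dist^2$ — and they are joined inside the larger section $S_\phi(0,(2+c_3^{1/2})t)$. A finite chain of interior sections (of the Vitali/covering type, Lemma \ref{cov_lem}, with universally bounded length because the relevant region has volume $\asymp t^{n/2}$ and each section has comparable measure, cf. the chain argument in Lemma \ref{chain_lem}) together with Theorem \ref{Holder_thm} gives $u(x_m)\le C_* u(P)$ for a universal $C_*$. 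Choosing $\hat C:=C_*+1$ contradicts $u(x_m)\ge\hat C u(P)$. Hence $\bar h(x)\le 2^{-k}\bar h(P)$, as claimed. The main obstacle is the bookkeeping in the second step: verifying that the entire chain $x_0,\dots,x_m$ never escapes $S_\phi(0,c)$ and that the number of iterations is $\le k$ with the exponents matching $D_k$; this is exactly what the technical hypothesis on $\bar h(P)$ and the quantitative containment from Theorem \ref{pst} are designed to handle, so the estimates, while fiddly, are forced.
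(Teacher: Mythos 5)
The paper proves this lemma by \emph{induction on $k$}, and the inductive structure is not just cosmetic: it is what makes the containment bookkeeping work. You instead try a \emph{forward chain-and-contradiction} argument, unrolling the induction; this is morally the same circle of ideas (Lemma~\ref{small_dist}, Theorem~\ref{pst}, Harnack), but the unrolling introduces a genuine gap that I do not believe can be repaired without reverting to the paper's structure.

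Here is the gap. In your chain $x_0=x,x_1,\dots,x_m$, the step increments satisfy $\delta_\phi(x_j,x_{j+1})\leq\theta_\ast^2(1+\hat K)\bar h(x_j)$, and $\bar h(x_j)$ is \emph{increasing} (at least doubling). You claim the chain stays in $S_\phi(0,(2+2^{-(k-1)/p_1})t)$ because $\sum_j\hat K\bar h(x_j)\lesssim\bar h(P)$. But a bounded sum of increments does \emph{not} give a bounded displacement in the quasi-metric $\delta_\phi$: Lemma~\ref{quasi} only gives $\delta_\phi(0,x_{j+1})\leq\theta_\ast^2\bigl(\delta_\phi(0,x_j)+\delta_\phi(x_j,x_{j+1})\bigr)$, which when iterated $m$ times multiplies the starting radius by $\theta_\ast^{2m}$, and $m$ is only bounded by $k$, not universally. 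You cannot instead invoke the sharper nested-section containment of Theorem~\ref{pst}(i) in the way the paper does, because that step requires an \emph{a priori upper bound} $\bar h(x_j)\lesssim 2^{-(k-j)}\bar h(P)$ to make the increment fit the geometric allowance $c_0(r_{k-j-1}-r_{k-j})^{p_1}t\asymp 2^{-(k-j)}t$; under your contradiction hypothesis you only have a \emph{lower} bound $\bar h(x_j)>2^{-(k-j)}\bar h(P)$. Nor can you appeal to the H\"older control of $\delta_\phi$ and Lemma~\ref{ims-lem}, as in the proof of Theorem~\ref{carl}, because there the increment sequence is geometrically \emph{decreasing}, whereas yours increases.

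The paper's induction sidesteps all of this. At level $k+1$, given $x\in D_{k+1}$ with $u(x)\geq\hat C M_1^{k+1}u(P)$, one first applies the induction hypothesis \emph{to $x$ itself at level $k$} (since $x\in D_k$ and $u(x)\geq\hat C M_1^k u(P)$) to get $\bar h(x)\leq 2^{-k}\bar h(P)$. That is precisely the upper bound missing from your argument; it makes $\delta_\phi(x,y)\leq\theta_\ast^2(1+\hat K)\bar h(x)$ small enough to fit inside the $D_{k+1}\to D_k$ allowance $c_0(r_k-r_{k+1})^{p_1}t$ via Theorem~\ref{pst}(i), so $y\in D_k$. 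Then the induction hypothesis is applied \emph{a second time, to $y$}, giving $\bar h(y)\leq 2^{-k}\bar h(P)$ and hence $\bar h(x)\leq\tfrac12\bar h(y)\leq 2^{-(k+1)}\bar h(P)$. Because each induction step moves between the fixed nested sections $D_{k+1}\subset D_k\subset\dots\subset S_\phi(0,3t)$ by a single application of Theorem~\ref{pst}(i), there is no compounding of $\theta_\ast^2$ factors and no need to track a long chain. Your initial observation, that a universal $\hat C$ can be chosen so that $\bar h(x)\geq 2^{-1}\bar h(P)$ forces $u(x)\leq(\hat C/2)u(P)$, is exactly the base case $k=1$ of this induction and is correct; what is missing is recognizing that the inductive step should be used twice (on $x$ and on $y$) rather than unrolled into a forward chain.
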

\begin{remark}
 By Proposition \ref{tan_sec}, $\bar h(x)$ and $dist^2 (x,\p\Omega)$ are comparable. However, $\bar h(x)$ is very well-behaved under affine transformations. That is why we use 
 $\bar h$ to measure
 the distance to the boundary in Lemmas \ref{small_dist}, \ref{dist_decay} and the proof of Theorem \ref{carl}.
\end{remark}

To prove the above ingredients and to implement the proof of Theorem \ref{carl}, we use geometric properties 
of boundary sections established in Section \ref{quasi_sec}. Moreover, we will use some geometric properties of the quasi distance 
$\delta_\phi$ generated by $\phi$. For $x\in\overline{\Omega}$, let us introduce the function $\delta_{\phi}(x,\cdot):\overline{\Omega}\rightarrow [0,\infty)$ defined by
\begin{equation}\delta_\phi(x, y) =\phi(y)-\phi(x)-\nabla\phi(x)(y-x).
 \label{del_dist}
\end{equation}
Then, we have the following quasi-metric inequality for $\delta_\phi$.
\begin{lemma}\label{quasi}  Assume that $\Omega$ and $\phi$ satisfy \eqref{global-tang-int}--\eqref{global-sep}. Then, for all $ x, y, z\in\overline{\Omega}$, we have
 $$\delta_\phi(x, y) \leq \theta_{\ast}^2 (\delta_\phi(x, z) + \delta_\phi(z, y)).$$
\end{lemma}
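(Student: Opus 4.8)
The plan is to prove the quasi-metric inequality for $\delta_\phi$ by leveraging the engulfing property of sections from Theorem \ref{engulfing2}. The key observation is that $\delta_\phi(x,y) < h$ is equivalent to $y \in S_\phi(x, h)$, so controlling $\delta_\phi$ is the same as understanding which sections contain which points. First I would fix $x, y, z \in \overline{\Omega}$ and set $h_1 := \delta_\phi(x,z)$ and $h_2 := \delta_\phi(z,y)$, so that $z \in \overline{S_\phi(x, h_1)}$ (more precisely $z \in S_\phi(x, h_1 + \e)$ for every $\e > 0$) and $y \in \overline{S_\phi(z, h_2)}$. The goal is to show $y \in S_\phi(x, \theta_\ast^2(h_1+h_2) + \e)$ for all $\e>0$, which gives $\delta_\phi(x,y) \leq \theta_\ast^2(h_1 + h_2)$.

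The main step is a two-fold application of the engulfing property. Fix $\e > 0$ and work with strict containments. Since $z \in S_\phi(x, h_1+\e)$, Theorem \ref{engulfing2} gives $S_\phi(x, h_1+\e) \subset S_\phi(z, \theta_\ast(h_1+\e))$. Next, since $y \in S_\phi(z, h_2 + \e)$, and also (enlarging heights) $S_\phi(x, h_1+\e) \subset S_\phi(z, \theta_\ast(h_1+\e)) \subset S_\phi(z, \theta_\ast(h_1+h_2) + \theta_\ast\e)$, I want to conclude $y$ and all of $S_\phi(x, h_1+\e)$ lie in a common section centered at $x$. The clean route: apply engulfing again to the point $z \in S_\phi(x, h_1 + \e)$ but now I need it the other way. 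Actually the correct chaining is: from $z \in S_\phi(x, h_1+\e)$ we get $S_\phi(x, h_1 + \e) \subset S_\phi(z, \theta_\ast(h_1+\e))$; hence $y \in S_\phi(z, h_2+\e) \subset S_\phi(z, \theta_\ast(h_1 + h_2) + \theta_\ast \e)$ provided we also note $S_\phi(z, h_2+\e) \subset S_\phi(z, \theta_\ast(h_1+h_2)+\theta_\ast\e)$ since $\theta_\ast > 1$. Now apply engulfing to $z \in S_\phi(x, h_1+\e)$ once more but in the form: any point of $S_\phi(z, \theta_\ast(h_1+\e))$... this still centers at $z$. The honest fix is to use engulfing with $z$ playing the role of the point inside $S_\phi(x, \cdot)$: we have $z \in S_\phi(x, H)$ where $H = \theta_\ast(h_1+h_2)+\theta_\ast\e$ (since $h_1 + \e \leq H$), so $S_\phi(x, H) \subset S_\phi(z, \theta_\ast H)$ — wrong direction again.

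I would instead run it as follows, which is the standard argument: since $y \in S_\phi(z, h_2 + \e)$, engulfing gives $S_\phi(z, h_2+\e) \subset S_\phi(y, \theta_\ast(h_2+\e))$, so $z \in S_\phi(y, \theta_\ast(h_2+\e))$. Combined with $z \in S_\phi(x, h_1+\e)$: now I have $z$ in two sections. Set $H := \theta_\ast^2(h_1 + h_2) + \theta_\ast^2 \e$. Then $z \in S_\phi(x, h_1 + \e)$ and since $h_1 + \e \leq H/\theta_\ast$, applying engulfing to $z \in S_\phi(x, h_1+\e)$ yields $x \in S_\phi(z, \theta_\ast(h_1+\e))$. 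Hmm — let me state the cleanest version: from $x \in S_\phi(z, \theta_\ast(h_1+\e))$ and the inclusion $S_\phi(y,\theta_\ast(h_2+\e)) \ni z$, use engulfing a final time on $z \in S_\phi(y, \theta_\ast(h_2+\e))$ to get $S_\phi(y, \theta_\ast(h_2+\e)) \subset S_\phi(z, \theta_\ast^2(h_2+\e))$; this does not obviously close either. The genuinely correct and simplest chain: $y \in S_\phi(z, h_2 + \e) \subset S_\phi(z, \theta_\ast(h_1+h_2) + \theta_\ast\e)$; and $S_\phi(x, h_1+\e) \subset S_\phi(z, \theta_\ast(h_1+\e)) \subset S_\phi(z, \theta_\ast(h_1+h_2)+\theta_\ast\e)$, so in particular $x \in S_\phi(z, \theta_\ast(h_1+h_2)+\theta_\ast\e) =: S_\phi(z, G)$. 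Now $y \in S_\phi(z, G)$ and $x \in S_\phi(z, G)$; apply engulfing to $x \in S_\phi(z, G)$ to get $S_\phi(z, G) \subset S_\phi(x, \theta_\ast G)$, whence $y \in S_\phi(x, \theta_\ast G) = S_\phi(x, \theta_\ast^2(h_1+h_2) + \theta_\ast^2\e)$. Therefore $\delta_\phi(x,y) < \theta_\ast^2(h_1+h_2) + \theta_\ast^2\e$, and letting $\e \to 0^+$ gives $\delta_\phi(x,y) \leq \theta_\ast^2(\delta_\phi(x,z) + \delta_\phi(z,y))$, as claimed.

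The only subtlety — and the step requiring a little care — is the passage between "$\delta_\phi(x,z) = h_1$" and "$z \in S_\phi(x, h_1+\e)$": this holds because $S_\phi(x, h)$ is defined as an open sublevel set $\{\phi(y) < \phi(x) + \nabla\phi(x)\cdot(y-x) + h\}$ restricted to $\overline\Omega$, so $\delta_\phi(x,z) = h_1$ means $z \in S_\phi(x, h)$ iff $h > h_1$. Using strict inequalities throughout and then taking $\e \downarrow 0$ avoids any boundary-of-section issues. Everything else is just Theorem \ref{engulfing2} applied with the points and heights above; no new geometry is needed, and no smallness of heights is required since the engulfing property in Theorem \ref{engulfing2} holds for all $t > 0$ and all $x \in \overline{\Omega}$.
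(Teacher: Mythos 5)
Your final argument is correct and is essentially identical to the paper's proof: both run the chain $y\in S_\phi(z,h_2+\varepsilon)\subset S_\phi(z,\theta_\ast(h_1+h_2+\varepsilon))$, show $x\in S_\phi(z,\theta_\ast(h_1+h_2+\varepsilon))$ via $z\in S_\phi(x,h_1+\varepsilon)$ and one application of engulfing, and then engulf once more to place $y\in S_\phi(x,\theta_\ast^2(h_1+h_2+\varepsilon))$ before sending $\varepsilon\to 0$. The earlier false starts in your write-up should be trimmed, but the conclusion you land on matches the paper.
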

\begin{proof}[Proof of Lemma \ref{quasi}]
 Let
 $r= \delta_\phi(x, z)$, and $s= \delta_\phi(z, y).$
Then for any $\varepsilon>0$, we have $y\in S_\phi(z, s+ \varepsilon)\subset S_\phi(z, \theta_{\ast}(r+ s +\varepsilon))$
and $z\in S_\phi(x, r +\varepsilon)$. Thus, by Theorem \ref{engulfing2}, we have
$$S_\phi(x, r+\varepsilon)\subset S_\phi(z, \theta_{\ast}(r+\varepsilon)) \subset S_\phi(z, \theta_{\ast}(r+s+ \varepsilon)).$$
Again, by Theorem \ref{engulfing2}, we have
$$S_\phi(z, \theta_{\ast}(r+s+ \varepsilon)) \subset S_\phi(x, \theta^2_{\ast}(r+s+ \varepsilon)).$$
It follows that $y\in  S_\phi(x, \theta^2_{\ast}(r+s+ \varepsilon))$. Hence
$\delta_\phi(x, y)\leq \theta^2_{\ast}(r+s+ \varepsilon)$
and the conclusion of the lemma follows by letting $\varepsilon\rightarrow 0.$
 \end{proof}

With the help of Theorem \ref{pst}, we can now follow Maldonado's proof of  \cite[Lemma 4]{M} to show that $\delta_{\phi}(x,\cdot)$ satisfies a uniform H\"older property 
as stated in the following lemma. Recalling that the constant $p_1$ in Theorem \ref{pst} satisfies $p_1=\bar\mu^{-1}$ where $\bar\mu$ is the constant in Lemma \ref{size_sec}.
 \begin{lemma}\label{bdr-H-d} Assume that $\Omega$ and $\phi$ satisfy \eqref{global-tang-int}--\eqref{global-sep}.
 Let $\bar\mu$ be as in Proposition \ref{tan_sec} and Lemma \ref{size_sec}.
 Then, there exists a universal constant $C_2\geq 1$ such that for all $x, y, z\in\overline{\Omega}$, we have
 $$|\delta_\phi(x, z)-\delta_\phi(x, y)|\leq C_2 [\delta_\phi(y, z)]^{\bar\mu} \left(\delta_\phi(x, z) + \delta_\phi(x, y)\right)^{1-\bar\mu}.$$
\end{lemma}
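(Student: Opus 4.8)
The plan is to follow the argument Maldonado uses for \cite[Lemma 4]{M}, with Theorem \ref{pst} playing the role of its interior analogue \cite[Theorem 3.3.10]{G} and with Theorem \ref{engulfing2} absorbing the asymmetry of $\delta_\phi$. Write $A:=\delta_\phi(x,z)$, $B:=\delta_\phi(x,y)$, $s:=\delta_\phi(y,z)$, and recall from Theorem \ref{engulfing2} that $\delta_\phi(z,y)$ and $\delta_\phi(y,z)$ are comparable, with $\delta_\phi(z,y)\le\theta_\ast s$. Since $A,B\ge0$ we have $|A-B|\le\max\{A,B\}=:h\le A+B$, so the inequality holds trivially with $C_2=(\theta_\ast/\eta_0)^{\bar\mu}$ whenever $s\ge\theta_\ast^{-1}\eta_0 h$, for any fixed universal $\eta_0\in(0,1)$. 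Thus everything reduces to the following \emph{thin shell estimate}: there are universal constants $C\ge1$ and $\eta_0\in(0,1)$ such that if $w,z_\ast\in\overline\Omega$ satisfy $\delta_\phi(x,z_\ast)=h\le M$ and $\sigma:=\delta_\phi(z_\ast,w)\le\eta_0 h$, then $\delta_\phi(x,w)\ge h\bigl(1-C(\sigma/h)^{\bar\mu}\bigr)$.

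Granting this, I would derive the lemma from two applications. If $h=A$, so $z\in\p S_\phi(x,h)$, apply the estimate with $z_\ast=z$, $w=y$ (legitimate since $\delta_\phi(z,y)\le\theta_\ast s\le\eta_0 h$); it gives $B\ge h\bigl(1-C(\delta_\phi(z,y)/h)^{\bar\mu}\bigr)$, hence $A-B=h-B\le C\theta_\ast^{\bar\mu}h^{1-\bar\mu}s^{\bar\mu}\le C\theta_\ast^{\bar\mu}(A+B)^{1-\bar\mu}s^{\bar\mu}$. If $h=B$, apply it with $z_\ast=y$, $w=z$, $\sigma=s$; it gives $A\ge h\bigl(1-C(s/h)^{\bar\mu}\bigr)$, hence $B-A\le Ch^{1-\bar\mu}s^{\bar\mu}\le C(A+B)^{1-\bar\mu}s^{\bar\mu}$. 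Taking $C_2:=\max\{(\theta_\ast/\eta_0)^{\bar\mu},\,C\theta_\ast^{\bar\mu}\}$ completes the proof; the degenerate cases $s=0$, $A=0$, $B=0$ are immediate.

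To establish the thin shell estimate I would argue by contradiction: suppose $\delta_\phi(x,w)<rh$, where $r:=1-C'(\sigma/h)^{\bar\mu}$ for a universal constant $C'$ to be fixed and $\eta_0$ is chosen small enough that $r\in(\tfrac{1}{2},1)$, and set $s':=1-\tfrac{C'}{2}(\sigma/h)^{\bar\mu}$, so $0<r<s'<1$ and $s'-r=\tfrac{C'}{2}(\sigma/h)^{\bar\mu}$. Choosing points $z_j\to z_\ast$ along the segment $[x,z_\ast]$, strict convexity of $\phi$ gives $\delta_\phi(x,z_j)\uparrow h$, so $z_j\in S_\phi(x,h)\setminus S_\phi(x,s'h)$ for $j$ large. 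Applying Theorem \ref{pst}(ii) with center $x$, height $t=h\le M$ and the point $z_j$ yields
\[
S_\phi\bigl(z_j,\,c_0(s'-r)^{p_1}h\bigr)\cap S_\phi(x,rh)=\emptyset .
\]
Since $w\in S_\phi(x,rh)$ by assumption, $\delta_\phi(z_j,w)\ge c_0(s'-r)^{p_1}h$; letting $j\to\infty$ and using that $\nabla\phi$ is continuous on $\overline\Omega$ (Lemma \ref{global-gradient}(i)) gives $\sigma=\delta_\phi(z_\ast,w)\ge c_0(s'-r)^{p_1}h$. This is where the identity $p_1=\bar\mu^{-1}$ enters: $(s'-r)^{p_1}=\bigl(\tfrac{C'}{2}\bigr)^{p_1}(\sigma/h)^{\bar\mu p_1}=\bigl(\tfrac{C'}{2}\bigr)^{p_1}(\sigma/h)$, so the bound becomes $\sigma\ge c_0\bigl(\tfrac{C'}{2}\bigr)^{p_1}\sigma$, which is impossible for $\sigma>0$ once $C'$ is chosen universally large so that $c_0(C'/2)^{p_1}>1$. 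Hence $\delta_\phi(x,w)\ge rh$, which is the claimed estimate with $C=C'$.

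The only genuinely delicate points I anticipate are the bookkeeping forced by the asymmetry of $\delta_\phi$ — handled uniformly by the engulfing property at the cost of universal powers of $\theta_\ast$ — and the application of Theorem \ref{pst}(ii) on the level set $\p S_\phi(x,h)$ rather than strictly inside it, which is the reason for the approximation $z_j\to z_\ast$ (equivalently, one could run the argument at height $h(1+\e)$ and let $\e\to0$). Everything else is a matching of universal constants, exactly as in \cite{M}.
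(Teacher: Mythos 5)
Your proposal is correct and follows essentially the same approach the paper intends, namely Maldonado's argument for \cite[Lemma 4]{M} with Theorem \ref{pst} replacing the interior estimate \cite[Theorem 3.3.10]{G}. The reduction to the thin shell estimate, the use of the engulfing property (Theorem \ref{engulfing2}) to symmetrize $\delta_\phi$, the application of Theorem \ref{pst}(ii) to a sequence $z_j\to z_\ast$ on the segment (circumventing the open--closed issue at the level set $\p S_\phi(x,h)$), and the crucial cancellation from $p_1=\bar\mu^{-1}$ are all in order; $\delta_\phi(x,\cdot)<M$ on $\overline\Omega$ by Lemma \ref{global-gradient}(ii) handles the height constraint, and the degenerate case $\sigma=0$ follows from strict convexity.
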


We are now ready to prove the first main result of the paper.
\begin{proof}[Proof of Theorem \ref{carl}] 
We can assume that $x_0=0\in\p\Omega$ and $\phi$ and $\Omega$ satisfy the hypotheses of the Localization Theorem \ref{main_loc} at 
0. Let $\hat K$ and $c_2$ be as in Lemma \ref{small_dist}; $\hat C, c_3$ and $M_1$ as in Lemma \ref{dist_decay}.
Let $\bar t= c_2 t$.
Then the {\it apogee} P of $S_\phi(0, c_3\bar t)$ satisfies $\theta_{\ast}^2 (1+\hat K)\bar h(P)< c_0\bar t(1-2^{-1/p_1})^{p_1}$. 
Without loss of generality, we assume that $u>0$ in $\Omega$.
We prove that
\begin{equation}
 \label{Key_Ht}
 u(x) < \hat C M_1^{k} u(P)~\text{for all}~ x\in S_\phi(0, c_3\bar t/2)
\end{equation}
for some universally large constant $k$ to be determined later on. Then by using Lemma \ref{chain_lem} and
the interior Harnack inequality for the linearized Monge-Amp\`ere equation in Theorem \ref{Holder_thm}, we easily obtain the stated estimate of the theorem.

We will use a contradiction argument, following the main lines of the proof of the Carleson estimate in \cite{CFMS}. Suppose that (\ref{Key_Ht}) is false. Then we can find $x_1\in S_\phi(0, c_3 \bar t/2)\subset S_\phi(0, (2+ 2^{-k/p_1})\bar t)$ satisfying $u(x_1)\geq \hat CM_1^k u(P).$ 
Invoking Lemma \ref{dist_decay}, we find
\begin{equation}\bar h(x_1)\leq 2^{-k} \bar h(P).
 \label{hx1bound}
\end{equation}
We fix a universally large integer $s$ so that 
$2^s\geq M_1.$ Let $z=\p\Omega\cap\p S_\phi(x_1,\bar h(x_1))$. Then, by Theorem \ref{engulfing2},
$$S_\phi(x_1,\bar h(x_1))\subset S_\phi(z,\theta_{\ast}\bar h(x_1))\subset S_\phi(z, H^s\theta_{\ast}\bar h(x_1)),$$
where $H$ is as in Lemma \ref{max-2h}. Reducing $c_3$ if necessary, we have for $k$ universally large
\begin{equation}
 \label{szH}
 S_\phi(z, 2H^s\theta_{\ast}\bar h(x_1))\subset S_\phi(0, \bar t/2).
\end{equation}
Indeed, if $y\in S_\phi(z, 2H^s\theta_{\ast}\bar h(x_1))$ then $\delta_\phi(z, y)\leq 2H^s\theta_{\ast}\bar h(x_1)$. From $\delta_\phi(x_1, z)\leq \bar h(x_1)$,
$\bar h(P)\leq \bar t$, and Lemma \ref{quasi},
we have in view of (\ref{hx1bound}),
$$\delta_\phi(x_1, y)\leq \theta_\ast^2 (\delta_\phi(x_1, z) + \delta_\phi(z, y))\leq \theta_{\ast}^2 (1 + 2H^s\theta_{\ast})\bar h(x_1)\leq 
\theta_{\ast}^2 (1 + 2H^s\theta_{\ast}) 2^{-k} \bar t.$$
Thus, from $\delta_\phi(0, x_1)\leq c_3 \bar t/2$ and Lemma \ref{quasi}, we obtain
$$\delta_\phi(0, y)\leq  \theta_\ast^2 (\delta_\phi(0, x_1) + \delta_\phi(x_1, y))\leq \theta_\ast^2 [c_3 \bar t/2 + 
\theta_{\ast}^2 (1 + 2H^s\theta_{\ast}) 2^{-k} \bar t] < \bar t/2$$
if $c_3$ is universally small and $k$ universally large.

With (\ref{szH}), applying Lemma \ref{max-2h} to $S_\phi(z, H^m\theta_{\ast}\bar h(x_1))$ successively for $m=0,\cdots,s-1$, we can find $x_2\in 
\overline{S_\phi(z, H^s\theta_{\ast}\bar h(x_1))}$ such that 
\begin{equation}u(x_2)\geq 2^s u(x_1) \geq \hat C M_1^{k+1} u(P).
\label{ux2bound}
\end{equation}
From $\delta_\phi(x_1, z)\leq \bar h(x_1),~\delta_\phi(z, x_2)\leq H^s \theta_{\ast} \bar h(x_1)$ and Lemma \ref{quasi}, we have
$$\delta_\phi(x_1, x_2)\leq \theta_{\ast}^2(1+ \theta_{\ast} H^s) \bar h(x_1).$$
Let $$d_0=\delta_\phi(0, x_{1}), d_1= \delta_\phi(0, x_{2}).$$
Now, using $\bar h(P)\leq \bar t$, (\ref{hx1bound}) and the H\"older property of $\delta_\phi(0,\cdot)$ in Lemma \ref{bdr-H-d}, we find
\begin{eqnarray} d_1 &\leq& d_0 + C_2(\delta_\phi(x_1, x_{2}))^{\bar\mu} (d_0 + d_1)^{1-\bar\mu}  \leq d_0 + C_2 \left(\theta_{\ast}^2(1+ 
\theta_{\ast} H^s) 2^{-k}\bar h(P)\right)^{\bar\mu} (d_0 + d_1)^{1-\bar\mu}
\nonumber\\ &\leq & d_0 + \theta_0^{\bar\mu} (d_0 + d_1)^{1-\bar\mu}
\label{d1bound}
\end{eqnarray}
where $\{\theta_j\}_{j=0}^{\infty}$ is the sequence defined by
\begin{equation}\theta_j = C_2^{1/\bar\mu} \theta_{\ast}^2(1+ \theta_{\ast} H^s)2^{-(k+ j)}\bar t.  
 \label{thetaj}
\end{equation}
Clearly, if $k$ is universally large, we have $\theta_{j+1}<\theta_j<c_3 \bar t/2$ for all $j$ and furthermore
\begin{equation}\frac{2}{1- (\frac{\theta_0}{\bar t/2})^{\bar\mu}} \sum_{j=0}^{\infty}\left( \frac{\theta_j}{\bar t/2}\right)^{\bar\mu}<\log\frac{3}{2}.
 \label{kchoice}
\end{equation}
Given this $k$, we know from (\ref{d1bound}) together with $d_0<c_3 \bar t/2$ and the next Lemma \ref{ims-lem} that $\delta_\phi(0, x_{2}) < 3c_3\bar t/4$.  
Hence $x_2\in S_\phi(0, 3c_3 \bar t/4)$. Recalling (\ref{ux2bound}), we conclude
from Lemma \ref{dist_decay} that 
$$\bar h(x_2)\leq 2^{-(k+1)} \bar h(P).$$
With $k$ satisfying (\ref{kchoice}), we repeat the above process to obtain 
a sequence $\left\{x_j\right\}\subset S_\phi(0, 3c_3 \bar t/4)$ such that
\begin{equation}u(x_j)\geq \hat CM_1^{k+ j-1} u(P)
 \label{uxj}
\end{equation}
with
\begin{equation}\bar h(x_j)\leq 2^{-(k+j-1)}\bar h (P),~\text{and}~
\delta_\phi(x_j, x_{j+1})\leq \theta_{\ast}^2(1+ \theta_{\ast} H^s)\bar h(x_j)
\label{hxj}
\end{equation}
provided that $x_j\in S_\phi(0, 3c_3 \bar t/4)$. But this follows from the choice of $k$. Indeed, recalling $\theta_j$ in (\ref{thetaj}) and 
the H\"older property of $\delta_\phi(0,\cdot)$ in Lemma \ref{bdr-H-d}, we obtain 
for $d_{j}=\delta_\phi(0, x_{j+1})$
the estimate
$$d_{j+1}- d_{j} \leq C_2(\delta_\phi(x_{j+1}, x_{j+2}))^{\bar\mu}  (d_j + d_{j+1})^{1-\bar\mu} \leq \theta_j^{\bar\mu}(d_j + d_{j+1})^{1-\bar\mu}. $$
Hence, from $d_0\leq c_3 \bar t/2$, and (\ref{kchoice}),  we find from Lemma \ref{ims-lem} that
$$d_{j+1}\leq \frac{c_3 \bar t}{2} \text{exp}\left[\sum_{j=0}^{\infty} 
\frac{2}{1- (\frac{\theta_0}{\bar t/2})^{\bar\mu}}\left( \frac{\theta_j}{\bar t/2}\right)^{\bar\mu}\right]<\frac{3c_3}{4}\bar t.$$
We now let $j\rightarrow \infty$ in (\ref{uxj}) and (\ref{hxj}) to obtain $x_{\infty}\in \p\Omega\cap S_\phi(0, c_3\bar t)$ with $u(x_{\infty})=\infty$. This is a contradiction
to $u(x_{\infty})=0$. Hence (\ref{Key_Ht}) holds and the proof of our Theorem is
complete.
\end{proof}
In the proof of Theorem \ref{carl}, we used the following lemma, taken from \cite[Lemma 1]{IMS}.
\begin{lemma} (\cite[Lemma 1]{IMS})
\label{ims-lem}
 Let $R>0$, $\alpha\in (0, 1)$, $\{d_n\}_{n=0}^{\infty}\subset (0,\infty)$, $\{\theta_n\}_{n=0}^{\infty}\subset (0, R)$ such that $d_0\leq R,$
 $$d_{n+1}\leq d_n +\theta_n^{\alpha}(d_n + d_{n+1})^{1-\alpha},~
 \text{and~}
 \theta_{n+1}<\theta_n<R~\text{for all n}.$$
 Then 
 $$d_{n+1}\leq R~ \exp\left[ \frac{2}{1- (\theta_0/R)^{\alpha}} \sum_{j=0}^{n}\left( \frac{\theta_j}{R}\right)^{\alpha}\right].$$
\end{lemma}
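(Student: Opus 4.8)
The plan is to establish, by induction on $n$, the bound $d_n\le R_n$, where $R_n:=R\exp\!\big[\tfrac{2}{1-\beta_0}\sum_{j=0}^{n-1}\beta_j\big]$ and $\beta_j:=(\theta_j/R)^\alpha\in(0,1)$ (so $R_0=R$, the sequence $(R_n)$ is increasing, and $R_{n+1}$ is exactly the right-hand side of the asserted inequality). The base case $d_0\le R=R_0$ is the hypothesis. For the inductive step, assume $d_n\le R_n$. Since $a\mapsto a+\theta_n^\alpha(a+d_{n+1})^{1-\alpha}$ is nondecreasing, the recursion yields $d_{n+1}\le F(d_{n+1})$, where $F(s):=R_n+\theta_n^\alpha(R_n+s)^{1-\alpha}$. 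The function $F$ is increasing, strictly concave, has $F(0)>0$, and satisfies $F(s)/s\to0$ as $s\to\infty$ (because $(R_n+s)^{1-\alpha}=o(s)$); hence $F(s)-s$ is positive on $[0,s^*)$, vanishes at a unique $s^*>0$, and is negative afterwards, so $d_{n+1}\le F(d_{n+1})$ forces $d_{n+1}\le s^*$.

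Next I would estimate $s^*$. Writing $s^*=R_n\lambda$ reduces the fixed-point relation to $\lambda=1+p_n(1+\lambda)^{1-\alpha}$ with $p_n:=(\theta_n/R_n)^\alpha\le(\theta_n/R)^\alpha=\beta_n$ (using $R_n\ge R$); since the largest root of $\lambda=1+p(1+\lambda)^{1-\alpha}$ increases with $p$ (by the same monotonicity/concavity reasoning as above), we get $\lambda\le\lambda^*$, where $\lambda^*=1+\beta_n(1+\lambda^*)^{1-\alpha}$. Applying $t^{1-\alpha}\le t$ for $t\ge1$ to $t=1+\lambda^*$ gives $\lambda^*\le1+\beta_n(1+\lambda^*)$, hence $\lambda^*\le\frac{1+\beta_n}{1-\beta_n}$. (It is essential here to keep the summand $1$ and not bound $1+\lambda^*\le2\lambda^*$: the latter only gives $\lambda^*\le(1-2\beta_n)^{-1}$, which is too large to produce the constant $2/(1-\beta_0)$ and is moreover vacuous once $\beta_n\ge\tfrac12$.) Combining $\log(1+y)\le y$ and $-\log(1-y)\le\frac{y}{1-y}$ yields $\log\frac{1+y}{1-y}\le y+\frac{y}{1-y}=\frac{y(2-y)}{1-y}\le\frac{2y}{1-y}$, so with $y=\beta_n$ and then $\beta_n\le\beta_0$,
\[
\lambda^*\le\frac{1+\beta_n}{1-\beta_n}\le\exp\!\Big(\frac{2\beta_n}{1-\beta_n}\Big)\le\exp\!\Big(\frac{2\beta_n}{1-\beta_0}\Big).
\]
Therefore $d_{n+1}\le R_n\lambda^*\le R_n\exp\!\big(\tfrac{2\beta_n}{1-\beta_0}\big)=R_{n+1}$, which closes the induction; unwinding the definition of $R_{n+1}$ gives precisely the claimed bound.

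I expect the genuine obstacle to be exactly the calibration in the last paragraph — squeezing the per-step growth factor down to $\exp(2\beta_n/(1-\beta_0))$ rather than something larger. The concavity/monotonicity fixed-point comparison, the reduction to the scalar relation for $\lambda^*$, and the telescoping of the per-step factors into $\exp\!\big(\tfrac{2}{1-\beta_0}\sum_j\beta_j\big)$ are then routine.
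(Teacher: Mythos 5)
Your argument is correct. The paper cites this result as \cite[Lemma 1]{IMS} without reproducing a proof, so there is no in-paper argument to compare against. Your induction on $d_n\le R_n$ with $R_n=R\exp\bigl[\tfrac{2}{1-\beta_0}\sum_{j<n}\beta_j\bigr]$ is clean: the monotonicity in $d_n$ of the map $a\mapsto a+\theta_n^\alpha(a+d_{n+1})^{1-\alpha}$ reduces the inductive step to the implicit bound $d_{n+1}\le F(d_{n+1})$ with $F$ strictly concave, $F(0)>0$, $F(s)/s\to0$, so $d_{n+1}$ is dominated by the unique fixed point $s^*$; the rescaling $s^*=R_n\lambda$ with $p_n=(\theta_n/R_n)^\alpha\le\beta_n$, the monotone dependence of the fixed point on $p$, the elementary bound $(1+\lambda^*)^{1-\alpha}\le1+\lambda^*$ giving $\lambda^*\le\tfrac{1+\beta_n}{1-\beta_n}$, and the inequality $\log\tfrac{1+y}{1-y}\le\tfrac{2y}{1-y}$ together with $\beta_n\le\beta_0$ all check out, and the telescoping closes the induction. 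Your parenthetical remark about why one must keep the $+1$ rather than crudely bound $1+\lambda^*\le2\lambda^*$ is a correct and useful observation about the calibration of the constant.
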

\begin{proof}[Proof of Theorem \ref{compa}]
We just sketch the proof. By multiplying $u$ and $v$ by suitable constants, we assume that $u(P)= v(P)=1$.
Without loss of generality, we assume that $x_0=0$, $\phi$ and $\Omega$ satisfy the hypotheses of the Localization Theorem \ref{main_loc} at the origin. 
By Theorem \ref{carl},
\begin{equation}u(x)\leq M_0~\text{for all} ~x\in S_\phi(0, 3t/4).
 \label{uMeq}
\end{equation}
By \cite[Proposition 6.1]{LS}, we have
\begin{equation}u(x)\leq CM_0 t^{-1/2}dist (x,\p\Omega)~ \text{for all} ~x\in S_\phi(0, t/2).
\label{u-dist}
\end{equation}
Using Lemma \ref{chain_lem} and the interior Harnack inequality in Theorem \ref{Holder_thm}, we can find a universal constant $c_1$ such that
$$v(x)\geq c_1
~\text{for all} ~x\in S_\phi(0, 3t/4)~ \text{with}~ dist(x,\p\Omega)\geq c_1 t^{1/2}.$$
It follows from \cite[Proposition 5.7]{LS} that
\begin{equation}v(x)\geq c_2 t^{-1/2}dist (x,\p\Omega)~ \text{for all} ~x\in S_\phi(0, t/2).
\label{v-dist}
\end{equation}
The theorem follows from (\ref{u-dist}) and (\ref{v-dist}).

For the sake of completeness, we indicate how to obtain (\ref{u-dist}) and (\ref{v-dist}). Let $h=t$. We use the Localization Theorem \ref{main_loc} at the origin and consider the rescaled functions
$$\phi_h(x)= \frac{\phi(h^{1/2}A_h^{-1}x)}{h}, u_h(x)= u(h^{1/2}A_h^{-1}x)~\text{where}~x\in\Omega_h= h^{-1/2}A_h \Omega.$$
Then $u_h$ is a positive solution of
$$L_{\phi_h} u_h=0~ \text{in}~ \Omega_h\cap S_{\phi_h}(0, 1)~ \text{and}~ u_h=0~ \text{on}~ \p\Omega_h\cap S_{\phi_h}(0, 1).$$
By \cite[Proposition 6.1]{LS}, we know that  
$$ u_h(x) \leq C \left(\max_{S_{\phi_h}(0, 3/4)} u_h\right)  dist (x,\p\Omega_h)~\text{for all}~x \in S_{\phi_h}(0, 1/2).$$
Using (\ref{uMeq}) and (\ref{dist_h}), we obtain (\ref{u-dist}). The proof of (\ref{v-dist}), using rescaling, is similar.
\end{proof}
\subsection{Proofs of boundary properties of solutions to the linearized Monge-Amp\`ere}
\begin{proof}[Proof of Lemma \ref{max-2h}] Without loss of generality, we assume that $x_0=0$
and $\phi$ and $\Omega$ satisfy the hypotheses of the Localization Theorem \ref{main_loc} at 
0. We use the Localization Theorem \ref{main_loc} at the origin and consider the rescaled functions
$$\phi_h(x)= \frac{\phi(h^{1/2}A_h^{-1}x)}{h}, u_h(x)= u(h^{1/2}A_h^{-1}x)~\text{where}~x\in\Omega_h= h^{-1/2}A_h \Omega.$$
Then $u_h$ is a positive solution of
$$L_{\phi_h} u_h=0~ \text{in}~ \Omega_h\cap S_{\phi_h}(0, 2)~ \text{and}~ u_h=0~ \text{on}~ \p\Omega_h\cap S_{\phi_h}(0, 2).$$
We need to show that for $H$ universally large
\begin{equation*}
  \max_{x\in \overline{S_{\phi_h}(0, 1)}} u_h(x)\geq 2 \max_{x\in \overline{S_{\phi_h}(0, 1/H)}} u_h(x).
 \end{equation*}
Indeed, by Lemma \ref{sep-lem}, $\phi_h$ and  $S_{\phi_h}(0, 1)$ satisfy the hypotheses of the Localization Theorem \ref{main_loc}
at all points $x_0\in\p\Omega_h\cap B(0, c)$. Hence, if $1/H\leq c$ then from (\ref{small-sec}), we have
$$ \max_{x\in \overline{S_{\phi_h}(0, 1/H)}} dist (x,\p\Omega_h)\leq\max_{x\in \overline{S_{\phi_h}(0, 1/H)}} |x|\leq C_2 (\frac{1}{H})^{1/2} \log (\frac{1}{H}).$$
On the other hand, by \cite[Proposition 6.1]{LS}, we know that  
$$\max_{x\in \overline{S_{\phi_h}(0, 1/H)}} u_h(x) \leq C \max_{x\in \overline{S_{\phi_h}(0, 1)}} u_h(x) \max_{x\in \overline{S_{\phi_h}(0, 1/H)}} dist (x,\p\Omega_h)$$
for some $C= C(n,\lambda,\Lambda, \rho)$.
Therefore, for $H$ universally large. 
$$\max_{x\in \overline{S_{\phi_h}(0, 1/H)}} u_h(x) \leq C C_2 (\frac{1}{H})^{1/2} \log (\frac{1}{H})\max_{x\in \overline{S_{\phi_h}(0, 1)}} u_h(x)\leq 
\frac{1}{2}\max_{x\in \overline{S_{\phi_h}(0, 1)}} u_h(x).$$
\end{proof}
\begin{proof}[Proof of Lemma \ref{small_dist}]
Let $\{x_0=x, x_1,\cdots, x_m\}\subset S_\phi(0, \bar K dist^2(x,\p\Omega))$ and $\bar K$ be as in Lemma \ref{chain_lem} (i). 
We deduce from Proposition \ref{tan_sec} that $k_0^{2} \bar h(x) \leq dist^2(x,\p\Omega)\leq k_0^{-2} \bar h(x)$. Hence, for $\hat K:= \bar K k_0^{-2}$, 
$$x_m\in S_\phi(0, \hat K \bar h(x)),~
\text{and}~
 \bar h(x_m)\geq k_0^2 dist^2(x_m,\p\Omega)\geq 4k_0^{-2} dist^2(x,\p\Omega)\geq 4 \bar h(x).$$
 If $dist^2(x,\p\Omega)/c\leq r$ where $r$ is universally small then $$S_\phi(0, \hat K \bar h(x))\subset S_\phi(0, c/2)~\text{and}~ S_\phi(x_i,\bar h(x_i))\subset S_\phi(0, c/2)~\text{for all }i.$$
Since $u\geq 0$ satisfies $L_\phi u=0$ in $S_\phi(x_{i+1}, \frac{1}{2}\bar h(x_{i+1}))\subset\subset S_\phi(0, c/2)\cap \Omega$ and
$x_i\in S_\phi (x_{i+1},\tau \bar h(x_{i+1}))$, we obtain from the Harnack inequality in 
Theorem \ref{Holder_thm}
\begin{equation}
 \label{hitau}
 C_1^{-1} u (x_i)\leq u(x_{i+1})\leq C_1(n,\lambda,\Lambda) u (x_i)~\text{for each~}i=0, 1,\cdots, m-1.
\end{equation}
We now take $y=x_m$. Then $\bar h(y)\geq 4\bar h(x)$. From the chain condition $x_i\in S_\phi(x_{i+1},\tau\bar h(x_{i+1}))$ for all $i=0, 1,\cdots, m-1$ with $m$ universally bounded
 and (\ref{hitau}), we obtain for some universal $M_1$: $$M_1^{-1} u(x)\leq u(y)\leq  M_1 u(x).$$
\end{proof}
\begin{proof}[Proof of Lemma \ref{dist_decay}] By using Lemma \ref{chain_lem} and the interior Harnack inequality for the linearized Monge-Amp\`ere equation in Theorem \ref{Holder_thm}, we can find a universally large constant $\hat C$ such that for all $x\in S_\phi(0, 3t)$ with $\bar h(x)\geq 2^{-1}\bar h(P)$, we have $u(x)\leq (\hat C/2) u(P)$.
With this choice of $\hat C$, we can now prove statement of the lemma
by induction.
Clearly, the statement is true for $k=1$. Suppose it is true for $k\geq 1$. We prove it for $k+1$. Let $x\in D_{k+1}$ be such that 
$$u(x)\geq \hat CM_1^{k+1} u(P).$$
Let $z=\p\Omega\cap\p S_\phi(x,\bar h(x))$. If $t/c\leq c_2$ is small then we are in the setting of Lemma \ref{small_dist} with $z$ replacing $0$.
Indeed, we first note from $x\in D_{k+1}\subset S_\phi(0, 3t)$ and Lemma \ref{cen-lem} that 
\begin{equation}
 dist^2 (x,\p\Omega)\leq Ct.
 \label{Az}
\end{equation}
By Proposition \ref{tan_sec}, $$\delta_\phi(x, z)=\bar h(x)\leq k_0^{-2} dist^2(x,\p\Omega)\leq Ck_0^{-2}t.$$
Using Lemma \ref{quasi}, and $\delta_\phi(0, x)\leq 3t$, we find
$$\delta_\phi(0, z)\leq \theta_{\ast}^2[\delta_\phi(0, x) + \delta_\phi(x, z)]\leq C't$$
for some universal $C'$. Now, let $M=C/r$ where $C$ is as in (\ref{Az}) and $r$ is as in Lemma \ref{small_dist}. If $y\in S_\phi(z, Mt)$ then from Lemma \ref{quasi}, we have
$$\delta_\phi(0, y)\leq \theta_{\ast}^2[\delta_\phi(0, z) + \delta_\phi(z, y)]\leq \theta_{\ast}^2[C't + Mt]\leq c/2$$
provided that $t/c\leq c_2$ is universally small. It follows that if $c_2$ is small then
$
 S_\phi(z, Mt)\subset S_\phi(0, c/2).$
This combined with (\ref{Az}) and the choice of $M$ shows that the hypotheses of Lemma \ref{small_dist} are satisfied. 

By Lemma \ref{small_dist}, we can find $y\in S_\phi(z, \hat K\bar h(x))$ such that 
$$u(y)\geq M_1^{-1} u(x)\geq \hat CM_1^k u(P),~\text{and}~ \bar h(y)\geq 2 \bar h(x).$$
The lemma follows if we can show $y\in D_k$. Indeed, 
by the induction hypothesis for $k$, we have
$\bar h(y)\leq 2^{-k} \bar h(P)$
and hence
$$\bar h(x)\leq \frac{1}{2}\bar h (y)\leq 2^{-(k+1)} \bar h(P).$$
It remains to show that $y\in D_{k}$.
From $\delta_\phi(x, z)= \bar h(x),~\delta_\phi(z, y)\leq \hat K \bar h(x)$ and Lemma \ref{quasi}, we have
$$\delta_\phi(x, y)\leq \theta_\ast^2(1+ \hat K) \bar h(x).$$
Let us denote for simplicity $r_k = 2 + 2^{-k/p_1}$. Then $D_k= S_\phi(0, r_k t)$. Recalling $x\in D_{k+1}= S_\phi(0, r_{k+1}t)$, we have from Theorem \ref{pst} (i), 
$$S_\phi(x, c_0 (r_k- r_{k+1})^{p_1}t) \subset S_\phi(0, r_k t)= D_k.$$
By our choice of $P$, we have $\theta_\ast^2(1+ \hat K)\bar h(P)< c_0t(1-2^{-1/p_1})^{p_1}$ and hence
$$c_0(r_k- r_{k+1})^{p_1}t = c_0t2^{-k} (1-2^{-1/p_1})^{p_1}>\theta_\ast^2(1+ \hat K) 2^{-k}\bar h(P)\geq \theta_\ast^2(1+ \hat K)\bar h(x)\geq \delta_\phi(x, y).$$
Therefore,
$y\in S_\phi(x, c_0 (r_k- r_{k+1})^{p_1}t) \subset S_\phi(0, r_k t)= D_k.$
\end{proof}

\section{Bounding the Green's function for the linearized Monge-Amp\`ere operator }
\label{G_sec}
In this section, we prove Theorem \ref{Gthm} and Corollary \ref{lpbound} using the Harnack inequalities in Theorems \ref{carl}, \ref{Holder_thm} and 
properties of the Green's function for  the linearized Monge-Amp\`ere operator  $L_\phi$ as stated in the next subsection.

Throughout, we assume $\Omega$ and $\phi$ satisfy \eqref{global-tang-int}--\eqref{global-sep}.
Recall that $g_{V}(x, y)$ denotes the Green's function of $L_\phi$ in $V$ with pole $y\in V\cap\Omega$ where $V\subset\overline{\Omega}$, that is
$g_V(\cdot, y)$ is a positive solution of 
$$L_\phi g_V(\cdot, y)=\delta_{y}~\text{in} ~V\cap\Omega,~\text{and}~
 g_V(\cdot, y) =0~\text{on}~\p V.$$
\subsection{Properties of the Green's function} First, from the Aleksandrov-Bakelman-Pucci maximum principle, 
we observe the following uniform $L^{\frac{n}{n-1}}$  and $L^1$ bounds for the Green's function:

\begin{lemma} \label{abplem}
 If $V\subset\Omega$ is a convex domain then for all $x_0\in V$, we have the uniform $L^{\frac{n}{n-1}}$ bound:
 $$\left(\int_V \left[g_{V}(x, x_0)\right]^{\frac{n}{n-1}} dx\right)^{\frac{n-1}{n}}\leq C(n,\lambda) |V|^{1/n}.$$
As a consequence, we have the uniform 
$L^{1}$ bound:
 $$\int_V g_{V}(x, x_0) dx\leq C(n,\lambda) |V|^{2/n}.$$
 \end{lemma}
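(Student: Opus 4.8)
The plan is to combine the self-adjointness of $L_\phi$ — which holds because $\Phi$ is divergence-free, so that $L_\phi v = -\Phi^{ij}v_{ij} = -(\Phi^{ij}v)_{ij}$ — with the Aleksandrov--Bakelman--Pucci maximum principle, through a duality argument. Since $g_V(\cdot, x_0) \ge 0$, it suffices to prove
$$\int_V g_V(x, x_0)\, f(x)\, dx \le C(n,\lambda)\, |V|^{1/n}\, \|f\|_{L^n(V)} \qquad \text{for every } f \in C_c^\infty(V),\ f \ge 0;$$
taking the supremum over such $f$ with $\|f\|_{L^n(V)}\le 1$ then gives the $L^{n/(n-1)}$ bound by the converse Hölder inequality, and the $L^1$ bound follows immediately from Hölder: $\int_V g_V(\cdot,x_0)\, dx \le |V|^{1/n}\, \|g_V(\cdot,x_0)\|_{L^{n/(n-1)}(V)} \le C(n,\lambda)\, |V|^{2/n}$.

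To establish the displayed estimate, I would fix $f$ as above and let $w$ solve $L_\phi w = f$ in $V$ with $w = 0$ on $\partial V$; since $\Phi$ is positive definite and $f \ge 0$, the maximum principle gives $w \ge 0$ in $V$. Pairing $L_\phi g_V(\cdot, x_0) = \delta_{x_0}$ with $w$ and using that $L_\phi$ is formally self-adjoint while both $g_V(\cdot, x_0)$ and $w$ vanish on $\partial V$ (so the boundary terms in the integration by parts disappear), one obtains
$$\int_V g_V(x, x_0)\, f(x)\, dx = \int_V g_V(x, x_0)\, L_\phi w(x)\, dx = \int_V w(x)\, L_\phi g_V(x, x_0)\, dx = w(x_0) \le \sup_V w.$$
Then, since $w \le 0$ on $\partial V$ and $\Phi^{ij} D_{ij} w = -f$, the ABP estimate yields
$$\sup_V w \le C(n)\, |V|^{1/n}\, \Big\| \frac{f}{(\det \Phi)^{1/n}} \Big\|_{L^n(V)},$$
and the computation $\det \Phi = \det\big((\det D^2\phi)(D^2\phi)^{-1}\big) = (\det D^2\phi)^{n-1} = g^{n-1} \ge \lambda^{n-1}$, valid by \eqref{eq_u}, gives $(\det\Phi)^{1/n} \ge \lambda^{(n-1)/n}$, hence $\sup_V w \le C(n)\lambda^{-(n-1)/n}\,|V|^{1/n}\,\|f\|_{L^n(V)}$. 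Combined with the identity above, this is exactly the claimed inequality, and the lemma follows.

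There is no serious obstacle here; the two points that deserve care are (i) justifying the integration by parts in the presence of the Dirac mass — handled by the usual cutoff near $x_0$, or by working with the smooth approximating problems that the standing smoothness hypothesis permits — and (ii) invoking the form of the ABP maximum principle that produces the factor $|V|^{1/n}$ rather than $\mathrm{diam}(V)$.
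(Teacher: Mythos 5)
Your proof is correct and follows essentially the same path as the paper's: represent $\int_V g_V(\cdot,x_0)\,\varphi$ as the value at $x_0$ of the solution to the Dirichlet problem $L_\phi \psi = \varphi$, apply the ABP maximum principle with the weight $(\det\Phi)^{1/n}$, use $\det\Phi = (\det D^2\phi)^{n-1}\ge \lambda^{n-1}$, and finish by $L^n$--$L^{n/(n-1)}$ duality and H\"older. The only cosmetic difference is that the paper works with general $\varphi\in L^n$ and $|\psi(x_0)|$ whereas you restrict to $\varphi\ge 0$ and use $g_V\ge 0$; both are equivalent.
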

Given an $L^1$ bound for positive solution to the linearized Monge-Amp\`ere equation, we can use the interior Harnack inequality in Theorem \ref{Holder_thm} to 
obtain a pointwise upper bound in compactly supported subsets via the following general estimate.
\begin{lemma} \label{genup}Suppose $S_\phi(x_0, t)\subset\subset\Omega$.
 If $\sigma\geq 0$ satisfies $L_\phi\sigma=0$ in $S_\phi(x_0, t)\backslash \{x_0\}$ and  
 $$\int_{S_\phi(x_0, t)}\sigma(x) dx\leq A$$
 then
 $$\sigma(x)\leq C(n,\lambda,\Lambda)At^{-\frac{n}{2}}~\forall x\in \p S_{\phi}(x_0, t/2).$$
\end{lemma}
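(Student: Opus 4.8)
The plan is to fix an arbitrary point $x\in\p S_\phi(x_0,t/2)$ and prove $\sigma(x)\le C(n,\lambda,\Lambda)\,A\,t^{-n/2}$ with a universal constant; since $x$ is arbitrary this gives the lemma. The mechanism is the standard ``$L^1\Rightarrow L^\infty$'' passage: localize $\sigma$ to a small section centered at $x$ on which it is a genuine nonnegative solution (that is, a section not containing the pole $x_0$), use the interior Harnack inequality of Theorem \ref{Holder_thm} to dominate $\sigma(x)$ by the average of $\sigma$ over a comparable concentric section, and then feed in the hypothesis $\int_{S_\phi(x_0,t)}\sigma\le A$ together with the volume estimate of Lemma \ref{vol_lem}.

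First I would construct the localizing section, and this is really the only step needing care. Since $x\in\p S_\phi(x_0,t/2)$ and $\overline{S_\phi(x_0,t/2)}\subset S_\phi(x_0,t)\subset\subset\Omega$, one has $\delta_\phi(x_0,x)=t/2$. The geometric inclusion of Theorem \ref{pst}(i) applied with $r=2/3$, $s=1$ (equivalently the purely interior \cite[Theorem 3.3.10]{G} cited in the proof of Theorem \ref{pst}(i)) produces a universal $\eta_1>0$ with $S_\phi(x,\eta_1 t)\subset S_\phi(x_0,t)$. Separately, the engulfing property of Theorem \ref{engulfing2} forces $x_0\notin S_\phi(x,\eta t)$ whenever $\eta<(2\theta_\ast)^{-1}$: if $x_0\in S_\phi(x,\eta t)$ then $S_\phi(x,\eta t)\subset S_\phi(x_0,\theta_\ast\eta t)$, whence $t/2=\delta_\phi(x_0,x)<\theta_\ast\eta t$, a contradiction. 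Choosing a universal $\eta\le\min\{\eta_1,(3\theta_\ast)^{-1}\}$, the section $U:=S_\phi(x,\eta t)$ is compactly contained in $\Omega$, is contained in $S_\phi(x_0,t)$, and omits $x_0$, so $\sigma\ge 0$ solves $L_\phi\sigma=0$ throughout $U$.

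The rest is immediate. By Theorem \ref{Holder_thm} applied on $U$, and using $2\tau<1$ so that $S_\phi(x,2\tau\eta t)\subset U$,
$$\sigma(x)\ \le\ \sup_{S_\phi(x,2\tau\eta t)}\sigma\ \le\ C_1\inf_{S_\phi(x,2\tau\eta t)}\sigma\ \le\ \frac{C_1}{|S_\phi(x,2\tau\eta t)|}\int_{S_\phi(x,2\tau\eta t)}\sigma\,dy\ \le\ \frac{C_1\,A}{|S_\phi(x,2\tau\eta t)|},$$
where the last step uses $S_\phi(x,2\tau\eta t)\subset U\subset S_\phi(x_0,t)$ and the hypothesis. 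Since $S_\phi(x,2\tau\eta t)\subset\subset\Omega$, Lemma \ref{vol_lem} gives $|S_\phi(x,2\tau\eta t)|\ge c_1(n,\lambda,\Lambda)(2\tau\eta t)^{n/2}$, and combining the two bounds yields $\sigma(x)\le C(n,\lambda,\Lambda)\,A\,t^{-n/2}$. The hard part, such as it is, is simply pinning down the first step: verifying that the radius $\eta$ can be taken universal simultaneously for the inclusion into $S_\phi(x_0,t)$ and for the exclusion of $x_0$, uniformly over all $x\in\p S_\phi(x_0,t/2)$, and that the enclosing sections remain compactly supported so that Lemma \ref{vol_lem} applies.
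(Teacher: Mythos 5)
Your proof is correct and rests on the same basic mechanism as the paper's---pass from the $L^1$ hypothesis to a pointwise bound via the interior Harnack inequality of Theorem~\ref{Holder_thm} applied on a section around $x$ that stays inside $S_\phi(x_0,t)$ and avoids the pole $x_0$, then use the volume lower bound of Lemma~\ref{vol_lem}---but the implementation is genuinely leaner. The paper first carves out a ``bad'' set $D=(S_\phi(x_0,t)\setminus S_\phi(x_0,r_2t))\cup S_\phi(x_0,r_1t)$, shows $|D|$ is small via \cite[Lemma 6.5.1]{G}, establishes a uniform Harnack inequality $\sup_{S_\phi(x_0,t)\setminus D}\sigma\le C\inf_{S_\phi(x_0,t)\setminus D}\sigma$ over the whole annular region by a Vitali covering (Lemma~\ref{cov_lem}) and a chaining of local Harnack estimates, and only then averages the $L^1$ bound over the full annulus. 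You instead fix $x\in\p S_\phi(x_0,t/2)$, build a single universal-height section $S_\phi(x,\eta t)$ contained in $S_\phi(x_0,t)$ and omitting $x_0$ (using Theorem~\ref{pst}(i), or equivalently the interior \cite[Theorem 3.3.10]{G}, for the inclusion and engulfing for the exclusion, with $t\le M$ automatic from $S_\phi(x_0,t)\subset\subset\Omega$), apply Harnack once there, and divide by $|S_\phi(x,2\tau\eta t)|\ge c_1(2\tau\eta t)^{n/2}$. This bypasses the covering-and-chaining step entirely; the price is that you obtain only the pointwise bound on $\p S_\phi(x_0,t/2)$ rather than the annular sup/inf Harnack, but that is all the lemma requires. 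Both arguments are sound, with all inclusion constants universal as you verify.
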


Using the Harnack inequalities for the  linearized Monge-Amp\`ere equations in Theorems \ref{carl}, \ref{Holder_thm}, we can give
in the next lemma a sharp upper bound for the Green's function $g_{S_\phi(x_0, 2t)}(z, x_0)$ in the concentric section with half-height.
It is a global version of \cite[Lemma 3.1]{L}.
\begin{lemma}\label{doubleh}
 If $x_0\in\Omega$ and $t\leq c(n,\lambda,\Lambda, \rho)$ then
 \begin{equation}
  \max_{z\in \p S_\phi(x_0, t)} g_{S_\phi(x_0, 2t)}(z, x_0)\leq C(n,\lambda,\Lambda, \rho)t^{-\frac{n-2}{2}}.
  \label{dh}
 \end{equation}
\end{lemma}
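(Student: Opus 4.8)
The plan is to combine the Aleksandrov--Bakelman--Pucci $L^1$ bound for the Green's function (Lemma~\ref{abplem}) with the interior pointwise estimate of Lemma~\ref{genup}, using the Carleson estimate (Theorem~\ref{carl}) to reach the part of $\p S_\phi(x_0,t)$ that lies near $\p\Omega$. First I would record the global $L^1$ bound: by Lemma~\ref{abplem} and the volume estimate in Lemma~\ref{global-gradient}(iii), $\int_{S_\phi(x_0,Kt)}g_{S_\phi(x_0,Kt)}(\cdot,x_0)\,dx\le C|S_\phi(x_0,Kt)|^{2/n}\le C(n,\lambda,\Lambda,\rho)\,t$ for any fixed universal $K\ge 2$; since $S_\phi(x_0,2t)\subset S_\phi(x_0,Kt)$, the comparison principle for $L_\phi$ gives $g_{S_\phi(x_0,2t)}(\cdot,x_0)\le\sigma:=g_{S_\phi(x_0,Kt)}(\cdot,x_0)$ on $S_\phi(x_0,2t)$, so it suffices to bound $\sigma$ on $\p S_\phi(x_0,t)$, with $K$ a large universal constant fixed at the end. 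If $S_\phi(x_0,2t)\subset\subset\Omega$ (equivalently $\bar h(x_0)>2t$) the conclusion is immediate and reproduces the interior case \cite[Lemma 3.1]{L}: Lemma~\ref{genup} applied to $g_{S_\phi(x_0,2t)}(\cdot,x_0)$ with center $x_0$ and height $2t$ (it solves $L_\phi(\cdot)=0$ in $S_\phi(x_0,2t)\setminus\{x_0\}$ and has $L^1$-norm $\le Ct$) yields $g_{S_\phi(x_0,2t)}(z,x_0)\le Ct(2t)^{-n/2}=Ct^{-(n-2)/2}$ on $\p S_\phi(x_0,t)$. So from now on assume $\bar h(x_0)\le 2t$ and fix $z\in\p S_\phi(x_0,t)$, i.e.\ $\delta_\phi(x_0,z)=t$; recall that $\delta_\phi$ is a quasi-metric with constant $\theta_*^2$ (Lemma~\ref{quasi}) and that $\delta_\phi(y,x)\le\theta_*\delta_\phi(x,y)$ (an immediate consequence of the engulfing property, Theorem~\ref{engulfing2}), so $\delta_\phi(z,x_0)\ge t/\theta_*$.

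Suppose first that $z$ is \emph{deep}: $\bar h(z)\ge\eta t$ for a small universal $\eta$. I would set $h_z:=\min\{\tfrac12\bar h(z),\gamma t\}$ with $\gamma$ small enough that $h_z<\delta_\phi(z,x_0)$, so that $x_0\notin S_\phi(z,h_z)$; then $S_\phi(z,h_z)\subset\subset\Omega$ (it lies inside the maximal interior section $S_\phi(z,\bar h(z))$), $h_z$ is comparable to $t$, and $S_\phi(z,h_z)\subset S_\phi(x_0,2\theta_*^2 t)\subset S_\phi(x_0,Kt)$ by Lemma~\ref{quasi} provided $K$ is large. Lemma~\ref{genup} applied to $\sigma$ on $S_\phi(z,h_z)$ (no pole inside, $L^1$-norm $\le Ct$) gives $\sigma\le Ct\,h_z^{-n/2}\le Ct^{-(n-2)/2}$ on $\p S_\phi(z,h_z/2)$, and then the weak maximum principle for $L_\phi$ (valid since $(\Phi^{ij})\ge0$), applied to the nonnegative $L_\phi$-solution $\sigma$ in $S_\phi(z,h_z/2)\subset\subset\Omega$, yields $\sigma(z)\le\max_{\p S_\phi(z,h_z/2)}\sigma\le Ct^{-(n-2)/2}$. (A bounded chain of interior Harnack inequalities, Theorem~\ref{Holder_thm}, could replace the maximum principle here.)

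Now suppose $z$ is \emph{shallow}: $\bar h(z)<\eta t$ (in particular if $z\in\p\Omega$). Choose $z_0\in\p\Omega\cap\overline{S_\phi(z,\bar h(z))}$ (such a point exists by maximality of $\bar h(z)$), so $\delta_\phi(z,z_0)\le\bar h(z)<\eta t$; then Lemma~\ref{quasi} together with the near-symmetry of $\delta_\phi$ gives $\delta_\phi(x_0,z_0)\in[t/(2\theta_*^2),\,2\theta_*^2 t]$ once $\eta$ is small. I would then consider the boundary section $\mathcal S:=S_\phi(z_0,c_5 t)$, with $c_5$ a small universal constant: for $\eta$ small $z\in S_\phi(z_0,c_5 t/2)$, Theorem~\ref{pst}(ii) applied at $z_0$ (using the bounds on $\delta_\phi(x_0,z_0)$) shows $x_0\notin\overline{\mathcal S}$, and Lemma~\ref{quasi} gives $\mathcal S\subset S_\phi(x_0,Kt)$ for $K$ large. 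Since $\sigma=g_{S_\phi(x_0,Kt)}(\cdot,x_0)$ vanishes on $\p\Omega\cap\mathcal S$ and solves $L_\phi\sigma=0$ in $\Omega\cap\mathcal S$, the Carleson estimate (Theorem~\ref{carl}, applicable since $c_5 t\le c$) gives $\sigma(z)\le\max_{\overline{S_\phi(z_0,c_5 t/2)}}\sigma\le M_0\,\sigma(P_0)$, where $P_0$ is the apogee of $S_\phi(z_0,c_5 t/2)$; thus $dist(P_0,\p\Omega)\ge c_1(c_5 t)^{1/2}$, so $\bar h(P_0)\sim dist^2(P_0,\p\Omega)\gtrsim t$ by Proposition~\ref{tan_sec}, and since $\delta_\phi(z_0,P_0)=c_5 t/4$ is a \emph{small} multiple of $t$, a further application of Lemma~\ref{quasi} gives $\delta_\phi(x_0,P_0)\gtrsim t$. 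Hence $P_0$ is a deep point at $\delta_\phi$-distance $\gtrsim t$ from $x_0$, and the argument of the previous paragraph --- Lemma~\ref{genup} followed by the weak maximum principle, now centered at $P_0$ --- gives $\sigma(P_0)\le Ct^{-(n-2)/2}$, whence $\sigma(z)\le M_0Ct^{-(n-2)/2}$. One then fixes $K$ large and $\eta,\gamma,c_5$ small (all universal) so that every constraint above holds, and takes $t\le c(n,\lambda,\Lambda,\rho)$ accordingly.

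The step I expect to be most delicate is the geometric bookkeeping in the last two paragraphs. Because $\delta_\phi$ is only a quasi-metric, with constant $\theta_*^2>1$, and is not symmetric, one must fix the hierarchy of universal constants $K\gg 1\gg\eta,\gamma,c_5$ with some care, so as to simultaneously keep the pole $x_0$ outside each of the auxiliary sections centered at $z$, $z_0$ and $P_0$, and keep all of them inside $S_\phi(x_0,Kt)$ (the latter inclusion being what makes the global $L^1$ bound usable); in particular the crucial point is that the apogee $P_0$ produced by the Carleson estimate stays at $\delta_\phi$-distance comparable to $t$ from $x_0$, which forces the Carleson section $\mathcal S$ to be a genuinely small multiple of $t$. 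The analytic inputs --- Lemma~\ref{abplem}, Lemma~\ref{genup}, the Carleson estimate \ref{carl}, the weak maximum principle, Theorem~\ref{pst} and Lemma~\ref{quasi} --- are then applied essentially as stated.
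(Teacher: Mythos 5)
Your proposal is correct and uses the same core ingredients as the paper's proof (the ABP $L^1$ bound of Lemma~\ref{abplem}, the pointwise estimate of Lemma~\ref{genup}, the Carleson estimate of Theorem~\ref{carl}, and the maximum principle), so it is essentially the same approach. The organizational differences are minor: the paper splits on whether $S_\phi(x_0,t/K)\subset\Omega$, works directly with $g_{S_\phi(x_0,2t)}$ by nesting all auxiliary sections around the tangent point of $x_0$'s maximal section inside $S_\phi(x_0,2t)$ via an engulfing chain, and then applies Carleson to bound $\sigma$ uniformly on a single separating boundary $\partial S_\phi(0,\bar\theta^3 t/K)\cap\Omega$ before invoking the maximum principle, whereas you compare with the larger Green's function $g_{S_\phi(x_0,Kt)}$ to avoid the inclusion bookkeeping and instead bound $\sigma(z)$ pointwise for each $z\in\partial S_\phi(x_0,t)$ via a deep/shallow dichotomy.
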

The following lemma estimates the growth of the Green's function near the pole. It is a variant at the boundary of \cite[Lemma 3.2]{L}.
\begin{lemma}\label{todh}
If $x_0\in \Omega$, then 
\begin{equation}\max_{x\in\p S_\phi(x_0, t)} g_\Omega(x, x_0)\leq C(n,\lambda,\Lambda, \rho)t^{-\frac{n-2}{2}} + \max_{z\in \p S_\phi(x_0, 2t)} g_\Omega(z, x_0).
\label{iter_max}
\end{equation}
\end{lemma}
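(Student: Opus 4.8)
The plan is to compare $g_\Omega(\cdot, x_0)$, on the section $S_\phi(x_0,2t)$, with the Green's function of $L_\phi$ on the \emph{smaller} domain $S_\phi(x_0,2t)\cap\Omega$ --- for which Lemma \ref{doubleh} already supplies the power bound $t^{-\frac{n-2}{2}}$ --- and to control the discrepancy between the two by the maximum principle, picking up the boundary term $\max_{\p S_\phi(x_0,2t)} g_\Omega(\cdot,x_0)$. First I would reduce to $t\leq c(n,\lambda,\Lambda,\rho)$ small enough that Lemma \ref{doubleh} applies to $V:=S_\phi(x_0,2t)$ and that $V\cap\Omega$ is a proper subdomain of $\Omega$; this is the regime in which the lemma is used, and for larger $t$ one adjusts constants. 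Since $x_0\in V\cap\Omega$, the Green's function $g_V(\cdot,x_0)$ of $L_\phi$ in $V\cap\Omega$ is defined, and I set $h:=g_\Omega(\cdot,x_0)-g_V(\cdot,x_0)$ on $V\cap\Omega$.

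Next I would check that $h$ extends to an $L_\phi$-harmonic function on all of $V\cap\Omega$, continuous up to the boundary. Away from $x_0$ this is clear; at $x_0$ the two Green's functions have the same singularity, so $h$ is bounded there, and a removable-singularity argument for $L_\phi$ (equivalently, domain monotonicity of Green's functions, which also yields $h\geq 0$) finishes this point. I then read off the boundary values: on $\p V\cap\Omega$ one has $g_V(\cdot,x_0)=0$, so $h=g_\Omega(\cdot,x_0)$ there; on $\p\Omega\cap\overline V$ both Green's functions vanish, so $h=0$ there.

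Then I would apply the maximum principle to $h$ on $V\cap\Omega$. Since $\p(V\cap\Omega)\subset(\p V\cap\overline\Omega)\cup(\p\Omega\cap\overline V)$ and $h$ is nonnegative and continuous up to this boundary, its maximum over $\overline{V\cap\Omega}$ is attained on the boundary and equals $\max_{\p V\cap\Omega} g_\Omega(\cdot,x_0)$, which in turn equals $\max_{z\in\p V} g_\Omega(z,x_0)$ because $g_\Omega(\cdot,x_0)$ vanishes on $\p V\cap\p\Omega$ while the left side is positive. Now take any $x\in\p S_\phi(x_0,t)$; it suffices to consider $x\in\Omega$ since $g_\Omega(\cdot,x_0)$ vanishes on $\p\Omega$, and then $x\in\overline{V\cap\Omega}$, so
$$g_\Omega(x,x_0)=g_V(x,x_0)+h(x)\leq \max_{z\in\p S_\phi(x_0,t)} g_V(z,x_0)+\max_{z\in\p S_\phi(x_0,2t)} g_\Omega(z,x_0).$$
Lemma \ref{doubleh} bounds the first term on the right by $C(n,\lambda,\Lambda,\rho)\,t^{-\frac{n-2}{2}}$, which gives \eqref{iter_max}.

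The only delicate points I anticipate are the regularity of $h$ at the pole $x_0$ and the legitimacy of the maximum principle on $V\cap\Omega$, whose boundary is merely Lipschitz where $\p S_\phi(x_0,2t)$ meets $\p\Omega$. Both are standard: the cancellation of the $\delta_{x_0}$'s makes $h$ a genuine bounded solution near $x_0$, hence $L_\phi$-harmonic across it by removability, and the weak maximum principle for the (positive, possibly degenerate) operator $L_\phi$ requires no boundary regularity beyond continuity of $h$ up to $\p(V\cap\Omega)$.
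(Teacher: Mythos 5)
Your proof is correct and follows essentially the same route as the paper: you set $h=g_\Omega(\cdot,x_0)-g_{S_\phi(x_0,2t)}(\cdot,x_0)$ (the paper's $w$), observe it is $L_\phi$-harmonic on $S_\phi(x_0,2t)\cap\Omega$, and apply the maximum principle to pick up the boundary term $\max_{\p S_\phi(x_0,2t)}g_\Omega$, then invoke Lemma \ref{doubleh} for the other term. The only difference is that you spell out the removable-singularity step at the pole and the boundary decomposition, which the paper passes over silently.
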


With the above properties of the Green's function, we are now ready to prove Theorem \ref{Gthm}.
\begin{proof} [Proof of Theorem \ref{Gthm}]
Let $m$ be a nonnegative integer such that $c/2< 2^m t_0\leq c$.
 Iterating Lemma \ref{todh}, we find
\begin{equation}\max_{x\in\p S_\phi(x_0, t_0)} g_{\Omega}(x, x_0)
\leq \max_{x\in\p S_\phi(x_0, 2^mt_0)} g_{\Omega}(x, x_0) + Ct_0^{-\frac{n-2}{2}}\sum_{l=1}^m (2^l)^{-\frac{n-2}{2}}.
\label{iterG}
\end{equation}
The desired upper bound for $\max_{x\in\p S_\phi(x_0, t_0)} g_{\Omega}(x, x_0)$ follows from the following:\\
{\bf Claim.} $\sigma:=g_{\Omega}(\cdot, x_0)$ is universally bounded by $C(n,\lambda,\Lambda, \rho)$ in $\Omega\backslash S_\phi(x_0, c/2)$.\\
Indeed, if $n\geq 3$, then $$\sum_{l=1}^m (2^l)^{-\frac{n-2}{2}} \leq C(n),$$
and hence, the Claim and (\ref{iterG}) give
$$\max_{x\in\p S_\phi(x_0, t_0)} g_{\Omega}(x, x_0) \leq C(n,\lambda,\Lambda, \rho) +  CC(n) t_0^{-\frac{n-2}{2}} \leq C(n,\lambda,\Lambda,\rho) t_0^{-\frac{n-2}{2}}.$$
If $n=2$, then since $m\leq \log_2 (c/t_0)$, the Claim and (\ref{iterG}) give
$$\max_{x\in\p S_\phi(x_0, t_0)} g_{\Omega}(x, x_0) \leq C(n,\lambda,\Lambda, \rho) +  C \log_2 (c/t_0) \leq C(n,\lambda,\Lambda,\rho) t_0^{-\frac{n-2}{2}} |\text{log}~ t_0|.$$

It remains to prove the Claim. If $S_{\phi}(x_0, c/4)\subset\Omega$ then by 
H\"older inequality and Lemma \ref{abplem}, 
 \begin{eqnarray*}
  \int_{S_\phi(x_0, c/4)} \sigma \leq \|\sigma\|_{L^{\frac{n}{n-1}}(\Omega)} |S_\phi(x_0, c/4)|^{\frac{1}{n}}\leq C(n,\lambda)
  |\Omega|^{\frac{1}{n}} |S_\phi(x_0, c/4)|^{\frac{1}{n}} \leq C.
 \end{eqnarray*}
We used the volume estimate in the last inequality. 
By Lemma \ref{genup}, 
$$\max_{x\in\p S_\phi(x_0, c/8)} \sigma(x) \leq C.$$
The claim follows from the maximum principle.

Suppose now $S_{\phi}(x_0, c/4)\not\subset\Omega$. We consider the ring $A=S_\phi(x_0, c)\backslash
S_\phi(x_0, c/4)$ and focus on the values of $\sigma$ at points $z\in \p S_{\phi}(x_0, c/2)\cap\Omega$. 
From the geometric properties of sections in Theorem \ref{pst}, we can find a section 
\begin{equation}S_\phi(z, c_2)\subset\subset A\cap\Omega
 \label{szc2}
\end{equation}
for $z\in\Omega$ and some $c_2$ universally small. For reader's convenience, we indicate how to construct this section. Since $S_{\phi}(x_0, c/4)\not\subset\Omega$, we can 
find $y\in S_\phi(x_0, c/2)\cap\p\Omega$. Then Theorem \ref{pst} gives $S_\phi(y, 2c_3)\subset S_\phi(x_0, 2c/3)\backslash S_\phi(x_0, c/3)$ for some universal constant $c_3>0$.
Let $z$ be the {\it apogee} of $S_\phi(y, 2c_3)$ (see Lemma \ref{cen-lem}), that is $z\in \p S_\phi(y, c_3)\cap\Omega$ with $dist(z,\p\Omega)\geq c_4$ for some universal $c_4>0$. We now
use Proposition \ref{tan_sec} to obtain a section $S_\phi(z, c_2)\subset\subset A\cap\Omega$ as asserted in (\ref{szc2}).

The uniform $L^1$ bound for $\sigma$ from Lemma \ref{abplem} gives
\begin{equation}\label{l1sig} \int_{S_\phi(z, c_2)}\sigma(x) dx \leq \int_{\Omega}\sigma(x) dx \leq C(n,\lambda) |\Omega|^{2/n},
 \end{equation}
Note that $L_\phi \sigma=0$ in $S_\phi(z, c_2)$. By the interior Harnack inequality in Theorem \ref{Holder_thm} and the lower volume bound in Lemma \ref{vol_lem}, we obtain
from (\ref{l1sig})
\begin{equation}
 \sigma (z) \leq C(n,\lambda,\Lambda,\rho).
 \label{szup}
\end{equation}
On the other hand, using the chain property of sections in Lemma \ref{chain_lem} and the boundary Harnack inequality in Theorem \ref{carl}, we find that $$\sigma(y)\leq
C(n,\lambda,\Lambda,\rho)\sigma(z)~ \text{for all}~ y\in \p S_\phi(x_0, c/2)\cap\Omega.$$ 
Combining this with (\ref{szup}), we find that
$$\sigma(y)\leq
C(n,\lambda,\Lambda,\rho)~ \text{for all}~ y\in \p S_\phi(x_0, c/2)\cap\Omega.$$ 
The claim now also follows from the maximum principle.
\end{proof}
We now give the proof of Corollary \ref{lpbound}.
\begin{proof}[Proof of Corollary \ref{lpbound}]
We give the proof for $n\geq 3$; the case $n=2$ is similar. Let $x_0\in\Omega$.
Let $C, c$ be the constants in Theorem \ref{Gthm}. Denote by $C_{\ast}= C/c^{\frac{n-2}{2}}$. For $s\geq C_\ast$, we have $(\frac{C}{s})^{\frac{2}{n-2}}\leq c$. Hence, from Theorem \ref{Gthm} 
$$\{x\in\Omega: g_{\Omega}(x, x_0)>s\}\subset S_{\phi}(x_0, (C/s)^{\frac{2}{n-2}}).$$
Then, by the volume estimate in Lemma \ref{global-gradient} (iii), we have
$$|\{x\in\Omega: g_{\Omega}(x, x_0)>s\}|\leq |S_{\phi}(x_0, (C/s)^{\frac{2}{n-2}})|\leq C_1 |(C/s)^{\frac{2}{n-2}}|^{\frac{n}{2}}
= C_3 s^{-\frac{n}{n-2}}.$$
It follows from the layer cake representation that
\begin{eqnarray*}
 \int_{\Omega}g_{\Omega}^p(x, x_0) dx &=&\int_{0}^{\infty} ps^{p-1} |\{x\in\Omega: g_{\Omega}(x, x_0)>s\}| ds
 \\&=& \int_{0}^{C_\ast} ps^{p-1} |\{x\in\Omega: g_{\Omega}(x, x_0)>s\}| ds + \int_{C_\ast}^{\infty} p s^{p-1} |\{x\in\Omega: g_{\Omega}(x, x_0)>s\}| ds\\
 &\leq & C_\ast^p|\Omega| + \int_{C_\ast}^{\infty} ps^{p-1} C_3 s^{-\frac{n}{n-2}} ds
 =C_\ast^p|\Omega| + \frac{pC_3}{\frac{n}{n-2}-p} C_\ast ^{p- \frac{n}{n-2}}\leq C(n,\lambda,\Lambda,\rho, p).
\end{eqnarray*}
We now prove last inequality stated in the Corollary. Fix $x\in  S_\phi(x_0, t)\cap\Omega$ where $t\leq c_1:=\frac{c}{2\theta_{\ast}}$. Then, by the engulfing property of sections 
in Theorem \ref{engulfing2}, we have $ S_\phi(x_0, t)\subset  S_\phi(x, \theta_{\ast}t).$ Therefore, if $y\in S_\phi(x_0, t)$
then $y\in S_\phi(x, \theta_{\ast}t)\subset S_\phi(x, 2\theta_{\ast}t)$. Note that
\begin{equation}
\int_{S_\phi(x_0, t)} g_{S_\phi(x_0, t)}^p (y, x) dy\leq \int_{S_\phi(x, 2\theta_{\ast}t)} g_{S_\phi(x, 2\theta_{\ast}t)}^p (y, x) dy
 \label{upg}
\end{equation}
and it suffices to bound from above the term on the right hand side of (\ref{upg}). 

As in the above upper bound for  $\int_{\Omega}g_{\Omega}^p(x, x_0) dx $, if we replace 
$\Omega$ by $S_\phi(x, 2\theta_{\ast}t)$ where we recall $2\theta_{\ast}t\leq c$ and $C_{\ast}$ by $C_0= C/t^{\frac{n-2}{2}}$ then 
for  $s\geq C_0$, 
$$\{y\in S_\phi(x, 2\theta_{\ast}t): g_{S_\phi(x, 2\theta_{\ast}t)}(y, x)>s\}\subset \{y\in\Omega: g_{\Omega}(y, x)>s\}\subset S_{\phi}(x, (C/s)^{\frac{2}{n-2}}).$$
Consequently, we also obtain
\begin{equation}
  \int_{S_\phi(x, 2\theta_{\ast}t)} g_{S_\phi(x, 2\theta_{\ast}t)}^p (y, x) dy\leq 
  C_0^p|S_\phi(x, 2\theta_{\ast}t)| + \frac{pC_3}{\frac{n}{n-2}-p} C_0 ^{p- \frac{n}{n-2}} \leq C(n, \lambda,\Lambda, \rho, p) t^{\frac{n}{2}-\frac{n-2}{2}p}
  \label{gsec}
\end{equation}
by the upper bound on the volume of sections in Lemma \ref{global-gradient}. Since $t\leq c_1$, we can use the  lower 
bound on the volume of sections in Lemma \ref{global-gradient} to deduce from (\ref{upg}), (\ref{gsec}) and $p<\frac{n}{n-2}$ that
$$\sup_{x\in S_\phi(x_0, t)\cap\Omega} \int_{S_\phi(x_0, t)} g_{S_\phi(x_0, t)}^p (y, x) dy \leq C(n, \lambda,\Lambda, \rho, p) t^{\frac{n}{2}-\frac{n-2}{2}p}
\leq C(n, \lambda,\Lambda, \rho, p) |S_\phi(x_0, t)|^{1-\frac{n-2}{n}p}.$$
\end{proof}

\subsection{Proofs of the properties of the Green's function}
\begin{proof} [Proof of Lemma \ref{abplem}] Let $x_0\in V$ be given and $\sigma= g_{V}(\cdot, x_0)$.
 By the ABP estimate, for any $\varphi\in L^n(V)$, the solution $\psi$ to 
$$-\Phi^{ij}\psi_{ij}=\varphi~\text{in}~V,~\psi=0~\text{on}~\p V,$$
satisfies
$$|\int_V \sigma(x) \varphi(x) dx|=|\psi(x_0)|\leq C(n)|V|^{1/n}\left\|\frac{\varphi}{\det \Phi}\right\|_{L^n(V)} \leq C(n,\lambda) |V|^{1/n} \|\varphi\|_{L^n(V)}.$$
Here we used the identity $\det \Phi= (\det D^2 \phi)^{n-1}$ and $\det D^2\phi\geq \lambda$. By duality, we obtain
$$\left(\int_V \left[g_{V}(x, x_0)\right]^{\frac{n}{n-1}} dx\right)^{\frac{n-1}{n}}=\|\sigma\|_{L^{\frac{n}{n-1}}(V)}\leq C(n,\lambda) |V|^{1/n}.$$
The uniform $L^1$ bound for $g_V(\cdot, x_0)$ on $V$ follows from the H\"older inequality and the uniform $L^{\frac{n}{n-1}}$ bound.
\end{proof}
\begin{proof}[Proof of Lemma \ref{genup}] 
Let $c_1$ and $C_1$ be as in Lemma \ref{vol_lem}. Denote by $$D= (S_\phi(x_0, t)\backslash S_\phi (x_0, r_2 t)) \cup S_\phi (x_0, r_1 t)$$ where $0<r_1<1/2<r_2<1$. Then, 
by \cite[Lemma 6.5.1]{G} and Lemma \ref{vol_lem}, we can estimate
$$|D|\leq n(1-r_2)|S_\phi (x_0, t)| + |S_\phi(x_0, r_1 t)|\leq C_1 n(1-r_2)t^{n/2} + C_1(r_1 t)^{n/2}\leq (c_1/2)^n t^{n/2}$$ 
if
$r_1, 1-r_2$ are universally small. Then by Lemma \ref{vol_lem},
\begin{equation}
\frac{c_1}{2} t^{n/2}\leq |S_\phi(x_0, t)\backslash D|.
\label{notK}
\end{equation}
Given $0<r_1<r_2<1$ as above, 
we have
\begin{equation}\sup_{S_\phi(x_0, t)\backslash D} \sigma \leq C(n,\lambda,\Lambda) \inf_{S_\phi(x_0, t)\backslash D} \sigma.
\label{CGK}
\end{equation}
Combining (\ref{notK}) and (\ref{CGK}) with the $L^1$ bound on $\sigma$, we find that
$$\sigma(x) \leq C(n,\lambda,\Lambda)At^{-\frac{n}{2}}~\forall x\in S_\phi(x_0, t)\backslash D.$$
 Since $r_2>1/2>r_1$, we obtain the desired upper bound for $\sigma(x)= g_V(x, x_0)$ when $x\in\p S_\phi(x_0, t/2)$.

For completeness, we include the details of (\ref{CGK}). By \cite[Theorem 3.3.10]{G}, we can find a universal $\alpha\in (0, 1)$ such that for each 
$x\in S_\phi(x_0, t)\backslash D$, the section $S_\phi(x, \alpha t)$ satisfies
$$x_0\not\in S_\phi(x,\alpha t)~\text{and}~ S_\phi(x, \alpha t)\subset S_\phi (x_0, t).$$ Using Lemma \ref{cov_lem}, we can find a collection 
of sections $S_\phi(x_i, \tau\alpha t)$ with $x_i\in S_\phi(x_0, t)\backslash D$ such that
$$S_\phi(x_0, t)\backslash D \subset \bigcup_{i\in I} S_\phi(x_i,\tau\alpha t)$$
and
$S_\phi(x_i,\delta\tau\alpha t)$ are disjoint for some universal $\delta\in (0, 1).$ By using the volume estimates in Lemma \ref{vol_lem}, we find that $|I|$ is universally 
bounded. Now, we apply the Harnack inequality in Theorem \ref{Holder_thm} to $\sigma$ in  each $S_\phi (x_i, \alpha t)$ to obtain (\ref{CGK}).
\end{proof}

\begin{proof}[Proof of Lemma \ref{doubleh}] Let $\sigma =  g_{S_\phi(x_0, 2t)}(\cdot, x_0).$
 We consider two cases for a universally large $K$.\\
 {\bf Case 1:} $S_\phi(x_0, t/K)\subset\Omega$. 
 By H\"older inequality and Lemma \ref{abplem}, we have
 \begin{eqnarray*}
  \int_{S_\phi(x_0, t/K)} \sigma \leq \|\sigma\|_{L^{\frac{n}{n-1}}(S_\phi(x_0, 2t))} |S_\phi(x_0, t/K)|^{\frac{1}{n}}\leq C(n,\lambda)
  |S_\phi(x_0, 2t)|^{\frac{1}{n}} |S_\phi(x_0, t/K)|^{\frac{1}{n}} \leq C t.
 \end{eqnarray*}
We used the volume estimate in Lemma \ref{vol_lem} in the last inequality. 
By Lemma \ref{genup}, we have
$$\max_{x\in\p S_\phi(x_0, \frac{t}{2K})} \sigma(x) \leq C t^{-\frac{n-2}{2}}.$$
Now, the
bound (\ref{dh}) follows from the maximum principle.\\
 {\bf Case 2:} $S_\phi(x_0, t/K)\not\subset\Omega$. Let $\bar\theta=2\theta_{\ast}^2, K=\bar\theta^6$.
 Then $c\geq t\geq K\bar h(x_0).$ Suppose that $\p S_\phi(x_0, \bar h(x_0))\cap\p\Omega=0.$ 
By the engulfing property in Theorem \ref{engulfing2} and $\bar\theta>\theta_{\ast}$, we have
\begin{equation*}S(x_0, \frac{t}{K})\subset S(0, \frac{\bar \theta t}{K})\subset S(x_0, \frac{\bar \theta^2t}{K})\subset S(0, \frac{\bar \theta^3t}{K})\subset S(x_0, \frac{\bar\theta^4t}{K})
\subset S(0, \frac{\bar\theta^5t}{K}) \subset S(x_0, \frac{\bar\theta^6t}{K})\subset S(x_0, t).
\end{equation*}
By Lemma \ref{cen-lem}, there is $y\in \p S_\phi(0, \frac{\bar \theta^3 t}{K})\cap\Omega$ such that $dist(y,\p\Omega)\geq ct^{1/2}$. Hence,  from Proposition 
\ref{tan_sec}, we can find a universal constant $c_2\leq \frac{\bar \theta}{\theta_{\ast}K}$ 
such that $S_{\phi}(y, c_2t)\subset\Omega.$ Then
\begin{equation}S(0, \frac{\bar \theta t}{K}) \cap S_\phi(y, c_2t)=\emptyset; S_\phi(y, c_2t)\subset S(0, \frac{\bar \theta^5 t}{K})\cap\Omega.
 \label{goody-sec}
\end{equation}
Indeed, 
suppose there is $z\in S(0, \frac{\bar \theta t}{K})\cap S_\phi(y, c_2t)$. Then, recalling the quasi-distance $\delta_\phi$ in (\ref{del_dist}), we have
$$\delta_\phi(0, y)=\frac{\bar\theta^3 t}{K}, \delta_\phi(0, z)\leq \frac{\bar \theta t}{K},~
\text{and}~\delta_\phi(y, z)\leq c_2t.$$
On the other hand, we observe from the engulfing property in Theorem \ref{engulfing2} that 
$$\delta_\phi(z, y)\leq \theta_{\ast}\delta_\phi(y, z)\leq \theta_{\ast}c_2t.$$
By Lemma \ref{quasi}, we find that
$$\delta_\phi(0, y)\leq \theta_\ast^2 (\delta_\phi(0, z)+\delta_\phi(z, y))\leq \theta_\ast^2 [\frac{\bar \theta t}{K} + \theta_\ast c_2t]\leq \frac{\bar \theta}{2}[\frac{\bar \theta t}{K}
+ \frac{\bar \theta t}{K}]=\frac{\bar \theta^2 t}{K}<\frac{\bar \theta^3 t}{K}.$$
This contradicts $\delta_\phi(0, y)= \frac{\bar \theta^3 t}{K}.$

Suppose now that there is $z\in \p S(0, \frac{\bar \theta^5 t}{K})\cap S_\phi(y, c_2t)$. Then, from $S_\phi(y, c_2 t)\subset\Omega$, we have
$\delta_\phi(0, z)= \frac{\bar \theta^5 t}{K}$ and $\delta_\phi(y, z)\leq c_2 t \leq \frac{\bar\theta t}{K}$. 
But from Lemma \ref{quasi}, we have a contradiction because
$$\delta_\phi(0, z)\leq \theta_\ast^2 (\delta_\phi(0, y)+\delta_\phi(y, z))\leq \theta_\ast^2 [\frac{\bar \theta^3 t}{K} + 
\frac{\bar \theta t}{K}]\leq \frac{\bar \theta}{2}[\frac{\bar \theta^3 t}{K}
+ \frac{\bar \theta^3 t}{K}]<\frac{\bar \theta^5 t}{K}=\delta_\phi(0, z).$$
From (\ref{goody-sec}), the $L^{\frac{n}{n-1}}$ bound on the Green's function in Lemma \ref{abplem}, and Lemma \ref{vol_lem}, we obtain
 \begin{eqnarray*}
 \int_{S_\phi(y, c_2t)} \sigma(x) dx\leq 
  \int_{S_\phi(x_0, 2t)} \sigma (x) dx &\leq& \|\sigma\|_{L^{\frac{n}{n-1}}(S_\phi(x_0, 2t))} |S_\phi(x_0, 2t)|^{\frac{1}{n}}\\ &\leq& C(n,\lambda)
  |S_\phi(x_0, 2t)|^{\frac{1}{n}} |S_\phi(x_0, 2t)|^{\frac{1}{n}} \leq C t.
 \end{eqnarray*}
 By Lemma \ref{genup}, we have
$\max_{x\in\p S_\phi(y, c_2t/2)} \sigma(x) \leq C t^{-\frac{n-2}{2}}.$
Thus, by the maximum principle, we have at the {\it apogee} $y$ of $S(0, \frac{2\bar\theta^3t}{K})$ the following estimate
$$\sigma(y)\leq Ct^{-\frac{n-2}{2}}.$$
With this estimate, we use Lemma \ref{chain_lem} and the boundary Harnack inequality in Theorem \ref{carl} 
together with the interior Harnack inequality in Theorem \ref{Holder_thm} to conclude that $\sigma$ is bounded by $Ct^{-\frac{n-2}{2}}$ on 
$ \p S(0, \frac{\bar\theta^3t}{K})\cap\Omega$. Hence, the
bound (\ref{dh}) follows from the maximum principle.
\end{proof}
\begin{proof}[Proof of Lemma \ref{todh}]
To prove (\ref{iter_max}), we consider
$$w(x) = g_\Omega(x, x_0)- g_{S_\phi(x_0, 2t)}(x, x_0).$$
It satisfies
$L_\phi w=0~\text{in}~ S_\phi(x_0, 2t)\cap \Omega.$
In $\overline{S_\phi(x_0, 2t)}$, $w$ attains its maximum value on the boundary $\p S_\phi(x_0, 2t)$. Thus, for $x\in \p S_\phi(x_0, t)$, we have
$$g_\Omega(x, x_0)- g_{S_\phi(x_0, 2t)}(x, x_0)\leq \max_{z\in \p S_\phi(x_0, 2t)} w(z)= \max_{z\in \p S_\phi(x_0, 2t)} g_\Omega(z, x_0)
$$
since $g_{S_\phi(x_0, 2t)}(z, x_0)=0$ for $z$ on $\p S_\phi(x_0, 2t)$. This together with the Lemma \ref{doubleh} gives
\begin{eqnarray*}\max_{x\in \p S_\phi(x_0, t)} g_\Omega(x, x_0)&\leq& \max_{z\in \p S_\phi(x_0, t)} g_{S_\phi(x_0, 2t)}(z, x_0) + 
\max_{z\in \p S_\phi(x_0, 2t)} g_\Omega(z, x_0)\\ &\leq & Ct^{-\frac{n-2}{2}} + \max_{z\in \p S_\phi(x_0, 2t)} g_\Omega(z, x_0).
\end{eqnarray*}
Therefore, (\ref{iter_max}) is proved.
\end{proof}

{} 
\end{document}